\newtheorem{theorem}{Theorem}[section]
\newtheorem{lemma}[theorem]{Lemma}
\newtheorem{proposition}[theorem]{Proposition}
\newtheorem{corollary}[theorem]{Corollary}
\newtheorem*{theorem*}{Theorem}
\theoremstyle{remark}
\newtheorem{remark}[theorem]{Remark}
\newtheorem{definition}[theorem]{Definition}
\newcommand{\myF}{E_1}
\newcommand{\myFp}{(E_1)}
\newcommand{\myG}{E_3}
\newcommand{\myGp}{(E_3)}
\newcommand{\nobox}[1]{#1}
\newcommand{\nobigzerol}{}
\newcommand{\nobigzerou}{}
\numberwithin{equation}{section}
\newcommand{\Z}{\mathbb{Z}}
\newcommand{\N}{\mathbb{N}}
\newcommand{\C}{\mathbb{C}}
\newcommand{\A}{\mathcal{A}}
\newcommand{\K}{\mathbb{K}}
\newcommand{\I}{\mathcal{I}}
\newcommand{\J}{\mathcal{J}}
\newcommand{\im}{\operatorname{im }}
\newcommand{\coker}{\operatorname{coker }}
\newcommand{\rank}{\operatorname{rank}}
\newcommand{\Ksix}{K_\textnormal{six}}
\newcommand{\Ksixo}{K^+_\textnormal{six}}
\newcommand{\fifi}{\mathbf{[11]}}
\newcommand{\fiin}{\mathbf{[1\infty]}}
\newcommand{\inin}{\mathbf{[\infty\infty]}}
\newcommand{\infi}{\mathbf{[\infty 1]}}
\newfont{\bg}{cmr9 scaled\magstep4}
\newcommand{\bigzerol}{\smash{\lower1.0ex\hbox{\bg 0}}}
\begin{document}
\title[The Ranges of $K$-theoretic Invariants]{The Ranges of $\boldsymbol{K}$-theoretic Invariants for Nonsimple Graph Algebras}

\author{S\o ren Eilers}
\address{Department for Mathematical Sciences\\University of
Copenhagen\\Universitetsparken 5\\DK-2100 Copenhagen \O \\Denmark}
\email{eilers@math.ku.dk}

\author{Takeshi Katsura}
\address{Department of Mathematics\\ Keio University\\
Yokohama, 223-8522\\ Japan}
\email{katsura@math.keio.ac.jp}

\author{Mark Tomforde}
\address{Department of Mathematics \\ University of Houston \\ Houston, TX 77204-3008 \\USA}
\email{tomforde@math.uh.edu}

\author{James West}
\address{Department of Mathematics \\ University of Houston \\ Houston, TX 77204-3008 \\USA}
\email{jdwest@math.uh.edu}

\thanks{This research was supported by the Danish National Research Foundation (DNRF) through the
Centre for Symmetry and Deformation.  Support was also provided by the NordForsk Research Network
  ``Operator Algebras and Dynamics'' (grant \#11580).  The third author was also supported by a grant from the Simons Foundation (\#210035 to Mark Tomforde).
}

\date{\today}

\subjclass[2000]{46L55}

\keywords{$C^*$-algebras, $K$-theory, six-term exact sequence, classification, range of invariant}

\begin{abstract}

There are many classes of nonsimple graph $C^*$-algebras that are classified by the six-term exact sequence in $K$-theory.  In this paper we consider the range of this invariant and determine which cyclic six-term exact sequences can be obtained by various classes of graph $C^*$-algebras.  To accomplish this, we establish a general method that allows us to form a graph with a given six-term exact sequence of $K$-groups by splicing together smaller graphs whose $C^*$-algebras realize portions of the six-term exact sequence. As rather immediate consequences, we obtain the first permanence results for extensions of graph $C^*$-algebras.

We are hopeful that the results and methods presented here will also prove useful in more general cases, such as situations where the $C^*$-algebras under investigations have more than one ideal and where there are currently no relevant classification theories available.
\end{abstract}

\maketitle

\section{Introduction}

In any classification program for a given class of mathematical objects there are three goals one wishes to accomplish: 
\begin{enumerate}[(1)]
\item Associate an invariant to each object in the class in such a way that the invariant completely classifies objects in the class up to some notion of equivalence.
\item Describe a tractable method to compute the invariant for a given object.
\item Determine the range of the invariant; i.e., identify all invariants that can be realized from the objects in the class. 
\end{enumerate}
In this paper we accomplish goal (3) for certain classes of graph $C^*$-algebras that are classified up to stable isomorphism by a six-term exact sequence of abelian groups in $K$-theory.

For the class of $C^*$-algebras classification programs have been very successful in the past few decades, particularly the classification of $C^*$-algebras using $K$-theoretic data as the invariant.  Two classification results were especially groundbreaking and opened several avenues for further research.  The first classification is the seminal work of Elliott in the 1970's, where it was shown that the AF-algebras are classified up to stable isomorphism by their ordered $K_0$-group \cite{Ell2}.  Later Effros, Handelman, and Shen showed that the range of this invariant is the class of all unperforated countable Riesz groups \cite{EHS}.  The second classification, occurring in the 1990's, showed that Kirchberg algebras (i.e., purely infinite, simple, separable, nuclear $C^*$-algebras in the bootstrap class) are classified up to stable isomorphism by the pair consisting of the $K_0$-group and the $K_1$-group \cite{Kir3, Phi}.  Moreover, it has been shown (cf.\ \cite[4.3.3]{mr:cnsc}) that the range of this invariant is all pairs $(G_0, G_1)$ of countable abelian groups.

In recent years more attention has been paid to the classification of nonsimple $C^*$-algebras \cite{Ro6, MN, Res}, and in this paper we focus on the classification of graph $C^*$-algebras.  It is known that any simple graph $C^*$-algebra is either an AF-algebra or a Kirchberg algebra, and hence is classified by either Elliott's Theorem or the Kirchberg-Phillips Classification.  
In particular, for a simple graph $C^*$-algebra the pair $(K_0(C^*(E)), K_1(C^*(E)))$ is a complete stable isomorphism invariant, where we view $K_0(C^*(E))$ as a pre-ordered group.  

A calculation for the $K$-theory \nobox{without order} was determined by Raeburn and Szyma\'nski \cite[Theorem~3.2]{RS} and by Drinen and the third author \cite[Theorem~3.1]{DT2}, and it is a consequence of these results that the $K_1$-group of a graph $C^*$-algebra must be a free abelian group. 
The order of the $K_0$-group was completely determined in \cite{amp} and \cite{Tomforde}.
The range of this invariant for simple graph $C^*$-algebras was calculated in independent work of Drinen and Szyma\'nski.  Drinen showed that any AF-algebra is stably isomorphic to a graph $C^*$-algebra of a row-finite graph, and it follows from this that all simple Riesz groups are attained as the $K_0$-group of a simple AF graph $C^*$-algebra.  Szyma\'nski \cite{Szy2} proved that for simple graph $C^*$-algebras that are Kirchberg algebras, all pairs of countable abelian groups $(G_0, G_1)$ with $G_1$ free abelian may be attained, and moreover for any such pair one may choose a graph $C^*$-algebra associated with a graph that is row-finite, transitive, and has a countably infinite number of vertices.

In classification efforts for the nonsimple graph $C^*$-algebras, the first and third author have shown that if $C^*(E)$ is a graph $C^*$-algebra with a unique proper nontrivial ideal $\I$, then the six-term exact sequence in $K$-theory 
$$ \xymatrix{
K_0(\I)\ar[r]^-{\iota_*} & K_0(C^*(E)) \ar[r]^-{\pi_*} & K_0 (C^*(E)/\I) \ar[d]^-{\partial_0}\\
K_1 (C^*(E)/\I) \ar[u]^-{\partial_1} & K_1 (C^*(E))  \ar[l]^-{\pi_*} & K_1(\I) \ar[l]^-{\iota_*}} $$
is a complete stable isomorphism invariant \cite[Theorem~4.5]{ET1}.  Additionally, it was shown in \cite[Theorem~4.7]{ET1} that this six-term exact sequence is also a complete stable isomorphism invariant in the case when $\I$ is a largest ideal in the graph $C^*$-algebra and $\I$ is an AF-algebra.  More recently \cite{ERR}, the first author, Restorff, and Ruiz have shown that the six-term exact sequence is also a complete stable isomorphism invariant in the case when $\I$ is a smallest ideal in the graph $C^*$-algebra $C^*(E)$, and $C^*(E)/\I$ is an AF-algebra. Thus there are several classes of graph $C^*$-algebras for which the six-term exact sequence arises as a complete stable isomorphism invariant.  Consequently, these results accomplish goal (1) for several classes of nonsimple graph $C^*$-algebras, and even more results have been announced recently.
With regards to goal (2), the computation of the invariant, it was shown by the first author, Carlsen, and the third author that the six-term exact sequence can be calculated from data provided by the vertex matrix of the graph \cite[Theorem~4.1]{CET}.

In this paper we turn our attention to goal (3) of classification: computing the range of the six-term exact sequence.  Following \cite{goodhand} (cf.\ \cite{ERR2}) we focus our attention to \textsl{stenotic} extensions given by an ideal $\I$ that contains, or is contained in, any other ideal of the $C^*$-algebra $C^*(E)$ in question.  We note that the classification results mentioned above cover all such extensions where ideal and quotient are either AF or simple.  Basic results regarding the $K$-theory of extensions given by graph $C^*$-algebras lead to the conditions that the $K_1$-groups must be free abelian groups, that the connecting map from $K_0(C^*(E)/\I)$ to $K_1(\I)$ must vanish, and when the extensions are stenotic  with ideals and quotients that are either AF or simple we obtain
certain further restrictions on the order of $K_0(C^*(E))$. We prove that these are the only restrictions, and combine our results with classification results to obtain the first permanence results for graph $C^*$-algebras, which allow us to determine from inspection of the $K$-theory when a given stable extension of AF or simple graph$C^*$-algebras is itself a graph $C^*$-algebra. As far as we can tell, this result is the first of its kind, even when restricted to the classical case of Cuntz-Krieger algebras where the necessary classification theory has been available since \cite{withgunnar}.
We also provide complete results on the case when $C^*(E)$ is unital, determining the possible position of the order unit of $K_0(C^*(E))$ in this case.

This paper is organized as follows. In Section~\ref{prelim-sec}
we establish some preliminaries and explain a key observation for our main results, namely, that for a graph $C^*$-algebra the six-term exact sequence from $K$-theory may be obtained by applying the Snake Lemma to a certain commutative diagram determined by the vertex matrix of the graph. In Section~\ref{simple-realize-sec} we obtain some range results for the $K$-theory of simple graph $C^*$-algebras and AF graph $C^*$-algebras.  In particular, we prove that there exist graph $C^*$-algebras from various classes (e.g., stable Kirchberg algebras, AF-algebras, simple Cuntz-Krieger algebras, unital Kirchberg algebras) that realize various $K_0$-groups and $K_1$-groups.  We also need a slightly stronger version of Szyma\'nski's Theorem \cite[Theorem~1.2]{Szy2}.  We are able to give a new, shorter proof of Szyma\'nski's Theorem (see Theorem~\ref{realize-simple-K-theories-prop}) that allows us to choose a graph with the additional properties that we need.  In Section~\ref{comm-diagram-sec} we establish the main technical results of our paper.  We prove a homological algebra result in Proposition~\ref{piece-together-prop} that allows us to produce various six-term exact sequences using the Snake Lemma and develop methods to arrange for positivity of the block matrices found there. Recasting our findings in the context of graphs in Section~\ref{gluing}, we provide sufficient conditions on a pair of graphs $E_1,E_3$ with $K$-theory fitting in a given six-term exact sequence to ensure 
in Proposition~\ref{six-term-graph-splice} that a new graph $E_2$ may be created in a way realizing the given $K$-theory. This new graph is formed by taking the disjoint union of $E_1$ and $E_3$, and then drawing a number of edges from vertices in $E_3$ to vertices in $E_1$ as determined by Proposition~\ref{piece-together-prop}. We also analyze the pre-order on the $K_0$-group of a graph $C^*$-algebra in terms of the pre-order on the $K_0$-groups of an ideal and its quotient. 
In Section~\ref{ranges-sec} we present the main results of this paper.  We apply all of our results to calculate the range of the six-term exact sequence in $K$-theory for various classes of graph $C^*$-algebras.  We show that for graph $C^*$-algebras with a unique proper nontrivial ideal we are able to attain any six-term exact sequence satisfying the obvious obstructions mentioned earlier.  We also determine exactly which six-term exact sequences are obtained in various other classes, including Cuntz-Krieger algebras with a unique proper nontrivial ideal, graph $C^*$-algebra extensions of unital Kirchberg algebras, graph $C^*$-algebras with a largest ideal that is AF, and graph $C^*$-algebras with a smallest ideal whose quotient is AF.  In all these cases, the proof is obtained by using results from Section~\ref{simple-realize-sec} to obtain graphs whose $C^*$-algebras realize portions of the six-term exact sequence, and then using Proposition~\ref{six-term-graph-splice} to splice these graphs together into a larger graph whose $C^*$-algebra has the required invariant. Finally, in Section~\ref{permanence} we show how to obtain permanence results from our results, giving a complete description in several cases of when an extension of two graph $C^*$-algebras is again a graph $C^*$-algebra.

The authors wish to thank Mike Boyle for inspiring discussions in the early phases of this work, and the authors would also like to thank Efren Ruiz for suggesting improvements in the later phases.

\section{Preliminaries} \label{prelim-sec}

For a countable set $X$, 
we let $\Z^X$ denote the free abelian group 
generated by the basis $\{\delta_x\}_{x\in X}$ indexed by $X$. 
For $n \in \N$, we denote $\Z^{\{1,2,\ldots, n\}}$ by $\Z^n$ 
which is the direct sum of $n$ copies of $\Z$, 
and denote $\Z^{\N}$ by $\Z^\infty$, 
which is the direct sum of countably infinite copies of $\Z$.

For two countable sets $X$ and $Y$, 
a \emph{column-finite} $Y \times X$ matrix with entries in $\Z$
is $A= (a_{y,x})_{(y,x) \in Y \times X}$ with $a_{y,x} \in \Z$ 
such that for each $x \in X$ there are only finitely many $y$ 
with $a_{y,x}\neq 0$. 
The collection of all such matrices is denoted by $M_{Y,X}(\Z)$. 
If $X=Y$, we denote $M_{X,X}(\Z)$ by $M_{X}(\Z)$. 
As above, we use notations like 
$M_{m,n}(\Z)$ for $m,n \in \{1, 2, \ldots, \infty \}$ 
which is the collection of all column-finite $m \times n$ matrices 
with entries in $\Z$. 

For two countable sets $X$ and $Y$, 
there is a one-to-one correspondence 
between elements of $M_{Y,X}(\Z)$ and $\Z$-module maps from $\Z^X$ to $\Z^Y$:  
Each matrix $A= (a_{y,x})$ in $M_{Y,X} (\Z)$ 
corresponds to a $\Z$-module map $\phi$ from $\Z^X$ to $\Z^Y$ 
by $\phi (\delta_x) = \sum_{y \in Y} a_{y,x} \delta_y$ 
which makes sense by the column-finite condition.
When we have a matrix $A$ we will often identify the matrix itself 
with the corresponding $\Z$-module map, using the notation $A$ for both, 
and for $\xi \in \Z^X$ we will often write $A\xi$ in place of $A(\xi)$.
If $X$ is a subset of $Y$, 
we denote by $I \in M_{Y,X}(\Z)$ the map defined 
by $I(\delta_x)=\delta_x$ for every $x \in X$, 
and by $P \in M_{X,Y}(\Z)$ the map defined 
by $P(\delta_x)=\delta_x$ for every $x \in X$ 
and $P(\delta_y)=0$ for every $y \in Y \setminus X$. 

We note that the trivial abelian group $\{0\}$ is denoted by $0$, 
and the unique $\Z$-module map from or to $0$ is also denoted by $0$. 
We have $\Z^{\emptyset}=\Z^0=0$ 
and for a countable set $X$ 
the $X \times \emptyset$ matrix and 
the $\emptyset \times X$ matrix corresponding to 
the unique $\Z$-module maps $0$ are denoted by $\emptyset$. 
Thus $M_{\emptyset,X} (\Z)=\{\emptyset\}$ 
and $M_{X,\emptyset} (\Z)=\{\emptyset\}$ 
by definition.

\subsection{Extension and $K$-theory preliminaries}

In this paper, an \emph{ideal} of a $C^*$-algebra will
we mean a closed two-sided ideal. 
Every nonzero $C^*$-algebra $\A$ has at least two ideals; 
$0$ and $\A$ which are called \emph{trivial ideals}. 
Hence a \emph{nontrivial ideal} is an ideal that is nonzero 
and proper,
and a simple $C^*$-algebra has not nontrivial ideals). 

\begin{definition}
An ideal $\I$ of $\A$ is said to be a \emph{smallest ideal} 
(resp.\ a \emph{largest ideal}) 
if $\I$ is nontrivial and 
whenever $\J$ is a nontrivial ideal of $\A$, 
we have $\I \subseteq \J$ (resp.\ $\J \subseteq \I$). An ideal $\I$ of $\A$  is called \emph{stenotic} if for any ideal $\J$, either $\J\subseteq \I$ or $\I\subseteq \J$. 
\end{definition}

The notion of stenosis was introduced to $C^*$-algebras in \cite{goodhand}. Obviously any smallest or largest ideal is stenotic.
Note that 
if $\I$ is a smallest (resp.\ largest) ideal 
then $\I$ (resp.\ $\A/\I$) is simple, 
but the converses are not true in general. 
A $C^*$-algebra $\A$ need not have a smallest ideal 
nor a largest ideal, but if it does, it will be unique. 
The uniqueness shows 
the following easy but useful observation. 

\begin{lemma} \label{trivial-lemma}
Let $\A$ and $\A'$ be $C^*$-algebras. 
Suppose both $\A$ and $\A'$ have smallest (or largest) ideals $\I$ and $\I'$. 
Then $\A$ and $\A'$ are isomorphic if and only if 
there exists a commutative diagram 
\begin{equation*} 
\xymatrix{
0 \ar[r] & \I \ar[r] \ar[d]& \A \ar[r] \ar[d]& \A/\I \ar[r] \ar[d]& 0\\
0 \ar[r] & \I' \ar[r] & \A' \ar[r] & \A'/\I' \ar[r] & 0
}
\end{equation*}
in which the three vertical maps are isomorphisms. 
\end{lemma}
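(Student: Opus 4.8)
The plan is to derive everything from the uniqueness of smallest (resp.\ largest) ideals already recorded above, using the fact that a $*$-isomorphism carries the lattice of ideals of one algebra onto that of the other in an inclusion-preserving fashion.

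The backward implication I would dispose of immediately: if such a commutative diagram exists, then its middle vertical arrow is by hypothesis an isomorphism $\A \to \A'$, so $\A$ and $\A'$ are isomorphic.

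For the forward implication, suppose $\varphi \colon \A \to \A'$ is a $*$-isomorphism. First I would observe that $\J \mapsto \varphi(\J)$ is a bijection from the set of ideals of $\A$ onto the set of ideals of $\A'$ that preserves inclusions in both directions and carries trivial ideals to trivial ideals. Consequently, if $\I$ is a smallest (resp.\ largest) ideal of $\A$, then for any nontrivial ideal $\J'$ of $\A'$ we may write $\J' = \varphi(\J)$ for a unique nontrivial ideal $\J$ of $\A$, and the defining inclusion $\I \subseteq \J$ (resp.\ $\J \subseteq \I$) yields $\varphi(\I) \subseteq \J'$ (resp.\ $\J' \subseteq \varphi(\I)$). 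Hence $\varphi(\I)$ is itself a smallest (resp.\ largest) ideal of $\A'$, and by the uniqueness of such ideals we conclude $\varphi(\I) = \I'$.

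It then remains to assemble the diagram: since $\varphi(\I) = \I'$, the isomorphism $\varphi$ restricts to a $*$-isomorphism $\I \to \I'$ and descends to a $*$-isomorphism $\A/\I \to \A'/\I'$ of the quotients, and together with $\varphi$ itself these furnish the three vertical isomorphisms, with commutativity of the left square expressing that $\varphi$ restricts to $\I$ and of the right square that the descended map is induced by $\varphi$. The only step I expect to require genuine care is the middle one, namely verifying that an isomorphism sends a smallest (resp.\ largest) ideal to a smallest (resp.\ largest) ideal; once that lattice observation is in place, uniqueness does the rest and no further $C^*$-algebraic input is needed.
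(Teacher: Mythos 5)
Your proof is correct and follows exactly the route the paper intends: the paper offers no detailed argument, remarking only that the lemma follows from the uniqueness of smallest (resp.\ largest) ideals, and your write-up supplies precisely that argument — an isomorphism induces an inclusion-preserving bijection of ideal lattices, so it carries the smallest (resp.\ largest) ideal $\I$ onto $\I'$, and then restricts and descends to give the three vertical isomorphisms. Nothing is missing, and the backward direction is indeed immediate.
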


If a $C^*$-algebra $\A$ has a unique nontrivial ideal $\I$, 
then $\I$ is smallest and largest 
(conversely a smallest and largest ideal is a unique nontrivial ideal). 
Thus we have an analogous result 
for $C^*$-algebras having unique nontrivial ideals. 

One advantage of Lemma~\ref{trivial-lemma} 
is that a short exact sequence gives us a powerful invariant. 
From a short exact sequence
\[
\xymatrix{
0 \ar[r] & \I \ar[r]^{\iota} & \A \ar[r]^{\pi} & \A/\I \ar[r] & 0
}
\]
of $C^*$-algebras, 
$K$-theory gives a cyclic six-term exact sequence 
\begin{equation} \label{Ksix}
\xymatrix{
K_0(\I)\ar[r]^-{\iota_*} & K_0(\A) \ar[r]^-{\pi_*} & K_0 (\A/\I) \ar[d]^-{\partial_0}\\
K_1 (\A/\I) \ar[u]^-{\partial_1} & K_1 (\A)  \ar[l]^-{\pi_*} & K_1(\I) \ar[l]^-{\iota_*}
} 
\end{equation}
of abelian groups.  
For convenience of notation, 
we let $\Ksix(\A,\I)$ denote this cyclic six-term exact sequence 
of abelian groups.

If we have a cyclic six-term exact sequence $\mathcal{E}$ of the form
\begin{equation} \label{six-term}
\xymatrix{
G_1 \ar[r]^-{\epsilon} & G_2 \ar[r]^-{\gamma} & G_3 \ar[d]^-{\delta_0} \\
F_3 \ar[u]^-{\delta_1} & F_2 \ar[l]_-{\gamma'} & F_1 \ar[l]_-{\epsilon'} 
}
\end{equation}
of abelian groups, 
then we say that $\Ksix(\A,\I)$ is \emph{isomorphic} to $\mathcal{E}$ 
if there exist isomorphisms $\alpha_i$, $\beta_i$ for $i=1,2,3$ making 
\begin{eqnarray}\label{isodef}
\xymatrix{ K_0(\I) \ar[rr]^-{\iota_*} \ar[rd]_-{\alpha_1} & & 
K_0(\A) \ar[rr]^-{\pi_*} \ar[d]_-{\alpha_2} & & 
K_0(\A/\I) \ar[ddd]^-{\partial_0} \ar[dl]^-{\alpha_3} \\ 
& {G_1}\ar[r]^-{\epsilon} & {G_2}\ar[r]^-{\gamma} 
& {G_3}\ar[d]^-{\delta_0} & \\ 
& {F_3}\ar[u]^-{\delta_1} & {F_2} \ar[l]_-{\gamma'} 
& {F_1}\ar[l]_-{\epsilon'} & \\ 
K_1(\A/\I) \ar[uuu]^-{\partial_1} \ar[ru]^-{\beta_3} & & 
K_1(\A) \ar[ll]_-{\pi_*} \ar[u]^-{\beta_2} & &
K_1(\I) \ar[ll]_-{\iota_*} \ar[ul]_-{\beta_1}}
\end{eqnarray}
commute.  We see from Lemma \ref{trivial-lemma} and functoriality of $K$-theory that when two isomorphic $C^*$-algebras with smallest or largest ideals are given, the associated cyclic six-term exact sequences are isomorphic.

\subsection{Ordered $K$-theory preliminaries}

Lemma~\ref{trivial-lemma} shows that $\Ksix(\A,\I)$ is 
an invariant of a $C^*$-algebra $\A$ 
in the case the ideal $\I$ is either smallest or largest (or both). 
However to get a finer invariant 
we need to consider a pre-order on $K_0$-groups. 

A \emph{pre-ordered abelian group} is a pair $(G, G^+)$, where $G$ is an abelian group and $G^+$ is a subset of $G$ satisfying $G^+ + G^+ \subseteq G^+$ and $0 \in G^+$. For $x, y \in G$ we write $x \leq y$ to mean $y-x \in G^+$.
A group homomorphism $h : G_1 \to G_2$ between pre-ordered abelian groups is called an \emph{order homomorphism} if $h(G_1^+) \subseteq G_2^+$.  If $h$ is also a group isomorphism with  $h(G_1^+) = G_2^+$, then we call $h$ an \emph{order isomorphism}.

If $\A$ is a $C^*$-algebra, then $K_0(\A)$ is a pre-ordered abelian group 
with $K_0(\A)^+:= \{ [p]_0 : p \in M_\infty(\A) \}$.  
If $\A$ contains an approximate unit consisting of projections 
(which is the case for a graph $C^*$-algebra), 
then $K_0(\A)^+ - K_0(\A)^+ = K_0(\A)$. 

For a $C^*$-algebra $\A$ and its ideal $\I$, 
we let $\Ksixo(\A,\I)$ denote the same sequence as \eqref{Ksix} 
but the three $K_0$-groups are considered as pre-ordered abelian groups. 
If we have a cyclic six-term exact sequence $\mathcal{E}^+$ 
of the same form as \eqref{six-term} but  
$G_1$, $G_2$, and $G_3$ are pre-ordered abelian groups, 
then we say that $\Ksixo(\A,\I)$ is \emph{order isomorphic} to $\mathcal{E}^+$  
if $\alpha_i$ is order an isomorphism of pre-ordered groups for $i= 1,2,3$.

A \emph{Riesz group} is an ordered abelian group $G$ that is unperforated (i.e., if $g \in G$, $n \in \N$, and $ng \in G^+$ then $g \in G^+$) and has the Riesz interpolation property (i.e., for all $g_1, g_2, h_1, h_2 \in G$ with $g_i \leq h_j$ for $i,j = 1,2$ there is an element $z \in G$ such that $g_i \leq z \leq h_j$ for $i,j =1,2$).  If $\A$ is an AF-algebra, then $K_0(\A)$ is a countable Riesz group.  Moreover, there is a lattice bijection between the ideals of $\A$ and the ideals of $K_0(\A)$ given by $\I \mapsto \iota_*(K_0(\I))$, 
where $\iota\colon \I \to \A$ is the inclusion map.
If $G$ is an ordered abelian group, then an \emph{ideal} in $G$ is a subgroup $H$ of $G$ with $H^+ = H \cap G^+$, $H = H^+ - H^+$, and whenever $x,y \in G$ with $0 \leq x \leq y$ and $y \in H^+$, then $x \in H$.  An ordered abelian group is \emph{simple} if it has no nontrivial ideals.

A pre-ordered abelian group $G$ is \emph{trivially pre-ordered} if $G^+=G$. 
If $\A$ is a simple purely infinite $C^*$-algebra, 
then $K_0(\A)$ is trivially pre-ordered.

\subsection{Graph and graph $C^*$-algebra preliminaries}\label{graph-subsec}

A (directed) graph $E=(E^0, E^1, r, s)$ consists of a countable set $E^0$ of vertices, a countable set $E^1$ of edges, and maps $r,s: E^1 \rightarrow E^0$ identifying the range and source of each edge.  A vertex $v \in E^0$ is called a \emph{sink} if $|s^{-1}(v)|=0$, and $v$ is called an \emph{infinite emitter} if $|s^{-1}(v)|=\infty$. A graph $E$ is said to be \emph{row-finite} if it has no infinite emitters. If $v$ is either a sink or an infinite emitter, then we call $v$ a \emph{singular vertex}.  We write $E^0_\textnormal{sing}$ for the set of singular vertices.  
Vertices that are not singular vertices are called \emph{regular vertices} and we write $E^0_\textnormal{reg}$ for the set of regular vertices.  
A \emph{cycle} is a sequence of edges $\alpha = \alpha_1 \alpha_2 \ldots \alpha_n$ with $r(\alpha_i) = s(\alpha_{i+1})$ for $1 \leq i < n$ 
and $r(\alpha_n)=s(\alpha_1)$. 
We call the vertex $r(\alpha_n)=s(\alpha_1)$ 
the \emph{base point} of the cycle $\alpha$.  
A \emph{loop} is a cycle of length $1$.

If $E$ is a graph, a \emph{Cuntz-Krieger $E$-family} is a set of mutually orthogonal projections $\{p_v : v \in E^0\}$ and a set of partial isometries $\{s_e : e \in E^1\}$ with orthogonal ranges which satisfy the \emph{Cuntz-Krieger relations}:
\begin{enumerate}
\item $s_e^* s_e = p_{r(e)}$ for every $e \in E^1$;
\item $s_e s_e^* \leq p_{s(e)}$ for every $e \in E^1$;
\item $p_v = \sum_{s(e)=v} s_e s_e^*$ for every $v \in E^0$ that is
not a singular vertex.
\end{enumerate} The \emph{graph $C^*$-algebra $C^*(E)$} is defined to be the $C^*$-algebra generated by a universal Cuntz-Krieger $E$-family.  The graph $C^*$-algebra is unital if and only if $E^0$ is a finite set in which case $1_{C^*(E)} = \sum_{v \in E^0} p_v$.

For any graph $E$, the \emph{regular vertex matrix} 
is the $E^0 \times E^0_\textnormal{reg}$ matrix $R_E$ 
with 
\[
R_E(v,w) := | \{ e \in E^1 :  \text{$r(e)= v$ and $s(e)=w$}  \} |.
\]
Since $w \in E^0_\textnormal{reg}$, 
all entries of $R_E$ are finite, and $R_E$ is column-finite. 
Hence we get $R_E \in M_{E^0, E^0_\textnormal{reg}}(\Z)$. 
We note that by the definition of regular vertices 
each column contains at least one nonzero entry. 
If $E$ has no regular vertices then 
$R_E = \emptyset \in M_{E^0, \emptyset}(\Z)$. 
Recall that $I \in M_{E^0, E^0_\textnormal{reg}}(\Z)$ 
is defined by $I(\delta_v)=\delta_v$ for $v \in E^0_\textnormal{reg}$.

\begin{proposition}[{\cite[Theorem~3.2]{RS}, \cite[Theorem~3.1]{DT2}}]
\label{K-group-computation-prop}
Let $E$ be a graph. 
Then we have 
\[
K_0(C^*(E)) \cong \coker (R_E-I)
\quad \text{ and } \quad
K_1(C^*(E)) \cong \ker (R_E-I). 
\]
\end{proposition}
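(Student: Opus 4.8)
The plan is to realize $C^*(E)$ as the Cuntz--Pimsner algebra $\mathcal{O}_X$ of the \emph{graph correspondence} $X$ over the commutative coefficient algebra $A := c_0(E^0)$, and then to extract the $K$-groups from the six-term exact sequence that Pimsner associated to such algebras---in the generality allowing a non-injective, non-proper left action, worked out by Katsura. Here $X$ is the right Hilbert $A$-module completion of the finitely supported functions on $E^1$, with right action, $A$-valued inner product, and left action $\phi$ determined by the range and source maps, so that the generators $s_e, p_v$ of $C^*(E)$ become the standard Cuntz--Pimsner generators. The first task is to check that this identification $C^*(E) \cong \mathcal{O}_X$ matches the Cuntz--Krieger relations; the decisive point is that the Katsura ideal $J_X := \phi^{-1}(\mathcal{K}(X)) \cap (\ker\phi)^\perp$, on which Cuntz--Pimsner covariance is imposed, is exactly $c_0(E^0_\textnormal{reg})$, since $\phi(p_v)$ is compact precisely when $v$ is not an infinite emitter and $\phi(p_v) \neq 0$ precisely when $v$ is not a sink. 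This is what forces the domain of the relevant matrix to be indexed by $E^0_\textnormal{reg}$ rather than all of $E^0$, and it is the conceptual heart of the statement.

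Second, I would compute the entries of the six-term sequence. Since $A = c_0(E^0)$ is an AF-algebra with one-dimensional blocks, $K_0(A) \cong \Z^{E^0}$ via $[p_v] \mapsto \delta_v$ and $K_1(A) = 0$; likewise $K_0(J_X) \cong \Z^{E^0_\textnormal{reg}}$ and $K_1(J_X) = 0$. The sequence has the form
\[
K_0(J_X) \xrightarrow{\ [\iota] - [X]\ } K_0(A) \to K_0(\mathcal{O}_X) \to K_1(J_X) \xrightarrow{\ [\iota] - [X]\ } K_1(A) \to \cdots,
\]
where $[\iota]$ is induced by the inclusion $J_X \hookrightarrow A$ and $[X]$ is the $KK$-theoretic map induced by the left action of $J_X$ on $X$. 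Under the identifications above, $[\iota]$ is the inclusion matrix $I \in M_{E^0, E^0_\textnormal{reg}}(\Z)$, and I would verify that $[X]$ sends $[p_w]$, for a regular vertex $w$, to $\sum_v R_E(v,w)\,[p_v]$, i.e., $[X] = R_E$; indeed $\phi(p_w)X$ is the right module $\bigoplus_{e : s(e)=w} p_{r(e)} A$, whose class in $K_0(A)$ is $\sum_{e : s(e)=w}[p_{r(e)}] = \sum_v R_E(v,w)[p_v]$, which is precisely the definition of $R_E$.

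With both $K_1(A)$ and $K_1(J_X)$ vanishing, the cyclic six-term sequence collapses to the single short exact sequence
\[
0 \to K_1(\mathcal{O}_X) \to \Z^{E^0_\textnormal{reg}} \xrightarrow{\ I - R_E\ } \Z^{E^0} \to K_0(\mathcal{O}_X) \to 0.
\]
Since $I - R_E$ and $R_E - I$ have the same kernel and cokernel, reading off the two ends gives $K_0(C^*(E)) = K_0(\mathcal{O}_X) \cong \coker(R_E - I)$ and $K_1(C^*(E)) = K_1(\mathcal{O}_X) \cong \ker(R_E - I)$, as claimed.

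I expect the main obstacle to be the first step: rigorously identifying $C^*(E)$ with $\mathcal{O}_X$ and, in particular, pinning down $J_X = c_0(E^0_\textnormal{reg})$ and the map $[X] = R_E$ in the presence of sinks and infinite emitters, where the left action fails to be injective or proper and one must use Katsura's relative form of the Cuntz--Pimsner algebra rather than Pimsner's original construction. An alternative route, which is the one taken in \cite{DT2}, avoids Cuntz--Pimsner theory by \emph{desingularizing} $E$ into a row-finite graph $F$ with no singular vertices---for which the computation is classical, as in \cite{RS}---and whose $C^*$-algebra is stably isomorphic to $C^*(E)$; there the obstacle shifts to a purely algebraic verification that adjoining the desingularizing tails leaves $\coker(R_E - I)$ and $\ker(R_E - I)$ unchanged.
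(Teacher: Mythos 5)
Your proposal is mathematically sound, but note that the paper offers no proof of this proposition at all: it is stated as a known result, quoted from the literature with citations to Raeburn--Szyma\'nski and Drinen--Tomforde, so the comparison must be against those cited arguments. Your primary route --- realizing $C^*(E)$ as Katsura's relative Cuntz--Pimsner algebra $\mathcal{O}_X$ of the graph correspondence over $c_0(E^0)$, identifying the Katsura ideal $J_X = c_0(E^0_\textnormal{reg})$, and reading $\ker$ and $\coker$ of $I - R_E$ off the six-term sequence once $K_1(A) = K_1(J_X) = 0$ --- is genuinely different from both. Raeburn and Szyma\'nski instead realize the stabilized graph algebra as a crossed product of an AF algebra by $\Z$ (via the gauge action and duality) and apply the Pimsner--Voiculescu sequence, while Drinen and Tomforde reduce arbitrary graphs to that setting by desingularization, which you correctly identify in your closing paragraph. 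What your approach buys is uniformity: sinks and infinite emitters are handled in one stroke by the ideal $J_X$, with no stable-isomorphism detour and no separate verification that adding tails preserves $\ker(R_E - I)$ and $\coker(R_E - I)$; what it costs is reliance on the heavier machinery of Katsura's generalized exact sequence and the nontrivial identification $C^*(E) \cong \mathcal{O}_X$ for non-injective, non-proper left actions, whereas the desingularization route rests only on the classical row-finite computation plus elementary algebra. Your computation of the ingredients is correct under the paper's conventions: the inclusion $J_X \hookrightarrow A$ induces the matrix $I \in M_{E^0, E^0_\textnormal{reg}}(\Z)$, and since $R_E(v,w) = |\{ e : r(e) = v,\, s(e) = w \}|$ while the left action of $p_w$ picks out edges with source $w$, one indeed gets $[X] = R_E$, so the collapsed sequence yields exactly the claimed kernel and cokernel.
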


From this proposition, 
we see that $K_0(C^*(E))$ and $K_1(C^*(E))$ are countable abelian groups, 
and in addition, $K_1(C^*(E))$ is a free abelian group 
because any subgroup of a free abelian group is free.  
We also have 
\begin{equation}\label{rank-computation}
\rank K_0(C^*(E)) + |E^0_\textnormal{reg}|
=\rank K_1(C^*(E)) + |E^0|. 
\end{equation}

Let $E$ be a graph. 
A subset $H \subseteq E^0$ is \emph{hereditary} 
if whenever $e \in E^1$ and $s(e) \in H$, then $r(e) \in H$. 
A hereditary subset $H$ is \emph{saturated} 
if whenever $v \in E^0_\textnormal{reg}$ 
with $r(s^{-1}(v)) \subseteq H$, then $v \in H$. 
For a hereditary subset $H$, 
we can define two graphs $F$ and $G$ by 
\begin{align} \label{two-subgraphs}
\myF&=(H,s^{-1}(H),r,s),& 
\myG&=(E^0\setminus H, E^1 \setminus r^{-1}(H),r,s)
\end{align}
where $r$ and $s$ are restrictions of those for $E$. 
The set $H$ is saturated 
if and only if we have 
$\myGp^0_\textnormal{reg} \supseteq E^0_\textnormal{reg} \setminus H$. 
If a saturated hereditary subset $H$ 
satisfies 
$\myGp^0_\textnormal{reg}=E^0_\textnormal{reg} \setminus H$
then we say that $H$ has \emph{no breaking vertices}. 
Note that if $E$ is row-finite, then 
every saturated hereditary subset has no breaking vertices.

For a saturated hereditary subset $H$, 
we denote by $\I_H$ the ideal of $C^*(E)$ generated
by $\{ p_v : v \in H \}$.  

\begin{proposition} \label{Ksix-from-snake-prop}
Let $E = (E^0, E^1, r, s)$ be a graph, 
and let $H$ be a saturated hereditary subset of $E^0$ 
such that $H$ has no breaking vertices.  
Let $\myF$ and $\myG$ be the two graphs as in \eqref{two-subgraphs} for $H$. 
Then we have the following: 
\begin{enumerate}
\item There is a natural embedding from $C^*(\myF)$ 
onto a full corner of $\I_H$.
\item We have $C^*(E)/\I_H \cong C^*(\myG)$.
\item We have  $E^0 = \myF^0 \sqcup \myG^0$ and $E^0_\textnormal{reg} = \myFp^0_\textnormal{reg} \sqcup \myGp^0_\textnormal{reg}$.
\item There exists a row-finite matrix 
$X \in M_{\myF^0, \myGp^0_\textnormal{reg}}(\Z^+)$ 
such that under the decomposition in (3), we get 
$R_E =\left( \begin{smallmatrix} R_{\myF} & X \\ 0 & R_{\myG} \end{smallmatrix} \right).$
\item $\Ksix (C^*(E), \I_H)$ is isomorphic to 
\begin{equation*}
\xymatrix{ \coker (R_{\myF}-I) \ar[r] & \coker {\left( \begin{smallmatrix} R_{\myF}-I & X \\ 0 & R_{\myG}-I \end{smallmatrix} \right)} \ar[r] & \coker (R_{\myG}-I)\phantom{.} \ar[d]^0 \\
\ker (R_{\myG}-I) \ar[u]^{[X]} &  \ker  {\left( \begin{smallmatrix} R_{\myF}-I & X \\ 0 & R_{\myG}-I \end{smallmatrix} \right)} \ar[l] & \ker (R_{\myF}-I) \ar[l]
}
\end{equation*}
where the horizontal maps are the obvious inclusions or projections, 
and $[X]$ is the map implemented by multiplication by $X$.  
\end{enumerate}
\end{proposition}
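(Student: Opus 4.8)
The plan is to establish the five claims largely in the order listed, since each feeds into the next, and to reduce everything to known structural facts about graph $C^*$-algebras together with the Snake Lemma. For part (1), I would recall the standard description of the ideal $\I_H$ associated to a saturated hereditary set $H$: the subalgebra generated by $\{p_v, s_e : v \in H, s(e) \in H\}$ is canonically isomorphic to $C^*(\myF)$, and since $H$ has no breaking vertices, no additional projections of the form $p_v^H$ are needed to generate $\I_H$. The corner is cut down by the projection (or approximate unit) supported on $H$, and fullness follows because $H$ is hereditary so the ideal it generates meets every corner. For part (2), the quotient $C^*(E)/\I_H$ is computed by the standard quotient theorem for graph algebras: modding out by $\I_H$ gives the Cuntz-Krieger algebra of the quotient graph obtained by deleting $H$ and all edges into $H$, which is exactly $\myG$; the no-breaking-vertices hypothesis ensures the regular vertices of $\myG$ are precisely $E^0_\textnormal{reg} \setminus H$, so no extra desingularization bookkeeping is required.

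Parts (3) and (4) are essentially combinatorial unwinding of the definitions. For (3), the vertex decomposition $E^0 = \myF^0 \sqcup \myG^0$ is immediate from $\myF^0 = H$ and $\myG^0 = E^0 \setminus H$, and the regular-vertex decomposition is exactly the content of the no-breaking-vertices condition, which I would invoke from the discussion preceding the proposition, namely that $\myGp^0_\textnormal{reg} = E^0_\textnormal{reg}\setminus H$ while $\myFp^0_\textnormal{reg} = E^0_\textnormal{reg} \cap H$. For (4), I would order the vertices so that $H$ comes first; heredity of $H$ forces the lower-left block of $R_E$ to vanish, because an edge into a vertex of $H$ cannot originate from a vertex outside $H$ without violating heredity, and the off-diagonal block $X$ counts edges from regular vertices of $\myG$ into vertices of $\myF$. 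Row-finiteness of $X$ and positivity of its entries are built into the definition of the regular vertex matrix.

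The real work is in part (5), and this is where I expect the main obstacle. The strategy is to assemble the data of (4) into a commutative diagram of free abelian groups with exact rows,
\begin{equation*}
\xymatrix{
0 \ar[r] & \Z^{\myGp^0_\textnormal{reg}} \ar[r]^-{I} \ar[d]_-{R_{\myG}-I} & \Z^{E^0_\textnormal{reg}} \ar[r]^-{P} \ar[d]_-{\left(\begin{smallmatrix} R_{\myF}-I & X \\ 0 & R_{\myG}-I\end{smallmatrix}\right)} & \Z^{\myFp^0_\textnormal{reg}} \ar[r] \ar[d]_-{R_{\myF}-I} & 0 \\
0 \ar[r] & \Z^{\myG^0} \ar[r]_-{I} & \Z^{E^0} \ar[r]_-{P} & \Z^{\myF^0} \ar[r] & 0
}
\end{equation*}
and then apply the Snake Lemma. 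The kernels of the three vertical maps become the three $K_1$-groups via Proposition~\ref{K-group-computation-prop}, and the cokernels become the three $K_0$-groups; the Snake Lemma then produces a six-term exact sequence whose horizontal maps are the induced inclusions and projections and whose connecting map is computed explicitly. The key points to verify carefully are that the block-triangular form of $R_E-I$ makes this square genuinely commute, that the induced map $K_1(\myG) \to K_0(\myF)$ coming from the snake connecting homomorphism is exactly multiplication by $X$ (trace an element of $\ker(R_{\myG}-I)$ back through the diagram: lift it into $\Z^{E^0_\textnormal{reg}}$, apply the middle vertical map to land in the image of $X$, then read off its class in $\coker(R_{\myF}-I)$), and that the map $\partial_0 \colon K_0(\myG) \to K_1(\myF)$ vanishes for degree reasons since $K_1$ sits as a kernel and the relevant composite factors through a surjection onto a free group.

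Finally, I would need to match this purely algebraic six-term sequence with the topological $\Ksix(C^*(E), \I_H)$. For this I would invoke naturality of the Raeburn--Szyma\'nski/Drinen--Tomforde isomorphisms of Proposition~\ref{K-group-computation-prop} together with parts (1) and (2): the inclusion $\I_H \to C^*(E)$ and the quotient map $C^*(E) \to C^*(\myG)$ induce on $K$-theory precisely the inclusion and projection maps appearing in the algebraic sequence, so that the entire topological six-term sequence is isomorphic to the algebraic one. The subtle part here is checking that these naturality squares commute, i.e.\ that the canonical $K$-theory isomorphisms are compatible with the maps induced by $\iota$ and $\pi$; this compatibility is what ultimately pins down the connecting map as $[X]$ and is the step I would treat with the most care.
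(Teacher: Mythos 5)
Your overall route is the one the paper itself has in mind: parts (1)--(2) are quoted as standard, (3)--(4) are elementary bookkeeping, and (5) is to be obtained by feeding the block decomposition of $R_E-I$ into the Snake Lemma and then identifying the resulting algebraic sequence with $\Ksix(C^*(E),\I_H)$. (The paper's printed proof is in fact just a citation of \cite[Remark~4.2]{CET} and \cite[Theorem~1]{CET}, with the Snake Lemma diagram recorded in the remark immediately following the proposition.)

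However, the central diagram in your part (5) has $\myF$ and $\myG$ in the wrong order, and as displayed it is not a morphism of short exact sequences, so the Snake Lemma cannot be applied to it. For your left-hand square to commute, $R_E-I$ would have to map the subgroup $\Z^{\myGp^0_\textnormal{reg}} \subseteq \Z^{E^0_\textnormal{reg}}$ into the subgroup $\Z^{\myG^0} \subseteq \Z^{E^0}$; it does not, since by (4) an element $\eta \in \Z^{\myGp^0_\textnormal{reg}}$ is sent to the vector with $\myF$-component $X\eta$ and $\myG$-component $(R_{\myG}-I)\eta$, and $X\eta$ is nonzero in general. Heredity of $H$ makes $R_E-I$ \emph{upper} triangular for the ordering with $\myF$ first, so it is $\Z^{\myFp^0_\textnormal{reg}}$ that is carried into $\Z^{\myF^0}$; hence the $\myF$-terms must occupy the left-hand (sub) position of the diagram, exactly as $\I_H$ sits inside $C^*(E)$ as the ideal. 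Moreover, even if your diagram did commute, the snake connecting map for your ordering would run $\ker(R_{\myF}-I)\to\coker(R_{\myG}-I)$, i.e.\ from ``$K_1(\I_H)$'' to ``$K_0(C^*(E)/\I_H)$'', a map that does not occur in the six-term sequence at all; what (5) requires is $[X]\colon\ker(R_{\myG}-I)\to\coker(R_{\myF}-I)$. Tellingly, your own parenthetical recipe for the connecting map (lift an element of $\ker(R_{\myG}-I)$, apply the middle vertical map, land in the image of $X$, read off its class in $\coker(R_{\myF}-I)$) is precisely the snake computation for the \emph{corrected} diagram, the one in the paper's remark, so the repair is mechanical: transpose the two outer columns. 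Separately, be aware that your final step cannot be settled by appealing to ``naturality'' of Proposition~\ref{K-group-computation-prop}: that proposition asserts no naturality, $\I_H$ is only isomorphic to $C^*(\myF)$ up to a full corner, and identifying the topological $\partial_1$ with $[X]$ and $\partial_0$ with $0$ is the genuinely hard content of the statement --- it is exactly what \cite{CET} proves and what the paper's proof cites.
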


\begin{proof}
Statements (1) and (2) are standard (see \cite{CET}). 
It is straightforward to check (3) and (4). 
Finally (5) follows from \cite[Remark~4.2]{CET} and \cite[Theorem~1]{CET}. 
\end{proof}

\begin{remark}
One can show that the sequence (5) above is nothing but 
the long exact sequence obtained by applying the Snake Lemma 
from homological algebra (see \cite{sm:h}) to 
the commutative diagram 
\begin{equation*}
\xymatrix{
0 \ar[r] & \Z^{\myFp^0_\textnormal{reg}} \ar[r] \ar[d]_{R_{\myF}-I} & \Z^{E^0_\textnormal{reg}} \ar[r] \ar[d]_{R_{E}-I} & \Z^{\myGp^0_\textnormal{reg}} \ar[r] \ar[d]_{R_{\myG}-I} & 0 \\
0 \ar[r] & \Z^{\myF^0} \ar[r] & \Z^{E^0} \ar[r] & \Z^{\myG^0} \ar[r] & 0
}
\end{equation*}
with exact rows. 
See Proposition~\ref{Snake-prop}. 
\end{remark}

\begin{remark}\label{gaugevanishes}
For a graph $E$ and a gauge-invariant ideal $\I$ of $C^*(E)$, 
there is a computation of $\Ksix (C^*(E), \I)$ in \cite[4.1]{CET}
similar to (5) of Proposition~\ref{Ksix-from-snake-prop}. 
In particular, the index map from $K_0(C^*(E)/\I)$ to $K_1(\I)$ is always $0$ in this case. 
\end{remark}

\begin{remark}
If $C^*(E)$ has a unique nontrivial ideal $\I$, 
then $\I=\I_H$ for a saturated hereditary subset $H$ of $E^0$ 
such that $H$ has no breaking vertices \cite[Lemma 3.1]{ET1}. 
\end{remark}

\section{$K$-groups of AF graph $C^*$-algebras and simple graph $C^*$-algebras} \label{simple-realize-sec}

It has been shown by the work of many hands that 
$K$-theoretic invariants completely classify, up to stable isomorphism,
the class of AF graph $C^*$-algebras 
and the class of simple graph $C^*$-algebras. 
The range of the invariants has also been computed. 
We review and reprove some of these results in this section to get sharper results regarding realization of graphs. 

The following is a refined realization of AF graph $C^*$-algebras. 

\begin{proposition}  \label{realize-simple-K-theories-AF-prop}
If $(G, G^+)$ is a countable Riesz group, then there exists a row-finite graph $E$ such that
\begin{enumerate}[(1)]
\item $E$ has no sinks, no sources, and a countably infinite number of vertices,
\item $E$ has no cycles (so that, in particular, $C^*(E)$ is an AF-algebra), 
\item $(K_0(C^*(E)), K_0(C^*(E))^+) \cong (G, G^+)$, and
\item  $C^*(E)$ is stable
\end{enumerate}
\end{proposition}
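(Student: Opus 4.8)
The plan is to build the graph directly from a presentation of $(G,G^+)$ as an inductive limit of simplicial groups, and then to perform a single modification that simultaneously removes sources and forces stability. By the Effros--Handelman--Shen theorem, a countable Riesz group is a dimension group, so I may write
$$(G, G^+) \cong \varinjlim \left( \Z^{n_k}, (\Z^{n_k})^+ ; A_k \right),$$
where each connecting map $A_k \colon \Z^{n_k} \to \Z^{n_{k+1}}$ is a positive homomorphism, i.e.\ a matrix with entries in $\Z^+$. After telescoping I may assume every connecting map is injective, so that no standard basis vector lies in a kernel and hence no column of any $A_k$ is zero. I then let $B$ be the Bratteli-diagram graph whose vertices form a disjoint union of levels $V_k$ with $|V_k| = n_k$, and which has $(A_k)_{w,v}$ edges from $v \in V_k$ to $w \in V_{k+1}$. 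Since every edge raises the level, $B$ has no cycles and $C^*(B)$ is AF; since each $A_k$ is a finite matrix, $B$ is row-finite; and since no column of any $A_k$ vanishes, every vertex emits at least one edge, so $B$ has no sinks and $B^0_{\textnormal{reg}} = B^0$.

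Next I verify the $K$-theory. Writing $S \in M_{B^0, B^0}(\Z)$ for the block-superdiagonal operator with blocks $A_k$, the regular vertex matrix is $R_B = S$, and the colimit description identifies $\coker(R_B - I) = \coker(S - I)$ with $\varinjlim(\Z^{n_k}, A_k) = G$, in agreement with Proposition~\ref{K-group-computation-prop}; the same computation gives $\ker(R_B - I) = 0$, consistent with $K_1 = 0$ for an AF-algebra. The point requiring care is the \emph{order}: under this identification the class $[\delta_v]$ of a vertex projection for $v \in V_k$ corresponds to the image of the basis vector $e_v \in (\Z^{n_k})^+$ in the colimit, so the cone $K_0(C^*(B))^+$ generated by the vertex projections is carried exactly onto $G^+ = \varinjlim (\Z^{n_k})^+$. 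Hence the isomorphism is an order isomorphism $(K_0(C^*(B)), K_0(C^*(B))^+) \cong (G, G^+)$.

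At this stage $B$ still has sources (the level-one vertices) and need not be stable. I resolve both issues at once by attaching to every vertex $v \in B^0$ an infinite \emph{head}: new vertices $v^{(1)}, v^{(2)}, \dots$ together with one edge $v^{(n+1)} \to v^{(n)}$ for each $n$ and one edge $v^{(1)} \to v$. Call the resulting graph $E$. This operation adds no cycles and keeps the graph row-finite; each head vertex emits exactly one edge, so $E$ still has no sinks, while now every vertex of $E$ receives an edge, so $E$ has no sources; moreover $E^0$ is countably infinite. For the $K$-theory, a direct calculation with $R_E - I$ shows that in the cokernel each new generator $\delta_{v^{(n)}}$ is identified with $\delta_v$, whence $\coker(R_E - I) \cong \coker(R_B - I)$ as ordered groups and $\ker(R_E - I) = 0$ is unchanged. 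Equivalently, attaching a head at every vertex realizes the stabilization $C^*(E) \cong C^*(B) \otimes \mathcal{K}$, which in particular makes $C^*(E)$ stable. Combining these observations yields all four required properties.

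The step I expect to be the main obstacle is the \emph{simultaneous} fulfilment of the combinatorial constraints together with the prescribed ordered $K_0$. The naive Bratteli diagram has sources at the bottom level and is in general not stable, so both defects must be repaired, and the difficulty is to repair them without disturbing the order cone arranged in the second paragraph. The head construction is precisely the device that supplies sourcelessness and stability at once, and the technical heart of the argument is the verification that it preserves the positive cone and the vanishing of $K_1$ --- ideally by exhibiting the explicit isomorphism $C^*(E) \cong C^*(B) \otimes \mathcal{K}$, and otherwise by the $R_E - I$ cokernel computation indicated above.
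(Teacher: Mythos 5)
Your overall architecture (EHS presentation, Bratteli-diagram graph, then attaching heads) runs parallel to the paper's proof, which instead cites Drinen's theorem and then adds tails to sinks and heads to sources. But there is a genuine gap at your very first reduction: the claim that ``after telescoping I may assume every connecting map is injective.'' Telescoping can never create injectivity, because kernels persist under composition: if $\ker A_k \neq 0$, then every composite $A_{k+m}\cdots A_k$ also has nonzero kernel. For instance, the constant system $\Z^2 \to \Z^2 \to \cdots$ with every map $\left(\begin{smallmatrix}1&1\\1&1\end{smallmatrix}\right)$ has limit $\Z[1/2]$, and no telescoping of it has a single injective map. Worse, the reduction is not merely unjustified but impossible in general: a dimension group admitting \emph{some} presentation with injective connecting maps is called ultrasimplicial, and not every countable Riesz group is ultrasimplicial. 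Indeed, if the underlying group is finitely generated, an injective presentation forces the connecting maps to be eventually unimodular (the increasing images must stabilize at the whole group), and Riedel's counterexample to the unimodular conjecture produces a finitely generated dimension group with no such presentation. So no argument relying on an injective presentation can cover all $(G,G^+)$. The failure is concrete in your construction: if some column of some $A_k$ vanishes, the corresponding vertex of $B$ is a sink, and attaching heads does not cure sinks, so your final graph $E$ violates condition (1).

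Fortunately you never need injectivity, only the weaker statement that no column of any $A_k$ is zero. This, too, survives telescoping (zero columns of $A_k$ remain zero columns of any composite $A_{k+m}\cdots A_k$), so it also requires a different device; two repairs keep your construction intact. (a) Prune the diagram: delete every vertex $v \in V_k$ whose basis vector dies at a finite stage, i.e.\ $A_{k+m-1}\cdots A_k\delta_v = 0$ for some $m$; this amounts to composing each $A_{k-1}$ with a coordinate projection, and one checks it changes neither $\varinjlim(\Z^{n_k},A_k)$ nor its positive cone, while the surviving matrices have no zero columns. (b) Alternatively, keep the sinks and, in your final step, attach an infinite tail to every sink in addition to the heads you attach everywhere --- exactly the move in the paper's proof --- noting that a tail only adds relations identifying each new generator with an old one, so $\coker(R-I)$, $\ker(R-I)$ and the cone are unchanged, and the graph stays acyclic and row-finite. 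With either repair the rest of your argument is sound, granted two citations you leave implicit: the identification of $K_0(C^*(B))^+$ with the cone generated by the vertex classes (Tomforde's ordered $K_0$ theorem, or your direct-limit description of $C^*(B)$ as an increasing union of finite-dimensional subalgebras), and the fact that attaching a head at every vertex yields $C^*(B)\otimes\K$ (Tomforde's stability results, the same tool the paper invokes via \cite{DT1}).
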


\begin{proof}
By the Effros-Handelman-Shen Theorem \cite{EHS}, there exists an AF-algebra $A$ whose $K_0$-group is order isomorphic to $(G, G^+)$.  By Drinen's Theorem \cite[Theorem~1]{Dri} there exists a row-finite graph $F$ such that $F$ has no cycles and $C^*(F)$ is stably isomorphic to $A$.  Let $E$ be the graph obtained by adding a tail (cf.\ \cite{DT1}) to every sink of $F$ and a head to every source of $F$ (cf.\ \cite{DT1}).  Then $E$ has no sinks or sources, $E$ has a countably infinite number of vertices, $E$ has no cycles, and $C^*(E)$ is isomorphic to $C^*(F)\otimes\K$ so that $(K_0(C^*(E)), K_0(C^*(E))^+) \cong (G, G^+)$. 
\end{proof}

We next consider simple graph $C^*$-algebras. 
The ``Dichotomy for simple graph $C^*$-algebras" \cite[Remark~2.16]{DT1} states that any simple graph $C^*$-algebra $C^*(E)$ is either purely infinite (if $E$ contains a cycle) or AF (if $E$ contains no cycles).  Consequently, the Kirchberg-Phillips Classification Theorem and Elliott's Theorem imply that any simple graph $C^*$-algebra is classified up to stable isomorphism by the pair $(K_0(C^*(E)), K_1(C^*(E)))$, where we consider $K_0(C^*(E))$ as a pre-ordered abelian group.  
If $C^*(E)$ is purely infinite, then $K_0(C^*(E))$ is trivially pre-ordered; i.e.\ $K_0(C^*(E))^+= K_0(C^*(E))$.  If $C^*(E)$ is AF, then $K_0(C^*(E))$ is an ordered group (in fact, a Riesz group) and $K_0(C^*(E))^+$ is a proper subset of $K_0(C^*(E))$.  Thus the ordering on the $K_0$-group can distinguish whether the simple graph $C^*$-algebra is purely infinite or AF.
We have already seen in Proposition~\ref{realize-simple-K-theories-AF-prop} 
that all simple Riesz groups are realized as the $K_0$-group 
of an AF graph $C^*$-algebra (and, moreover, that the graph may be chosen to have certain properties). 
Since an AF-algebra whose $K_0$-group is a simple Riesz group is simple, 
AF graph $C^*$-algebras given 
in Proposition~\ref{realize-simple-K-theories-AF-prop} 
for simple Riesz groups are necessarily simple. 
So in order to complete the description of the range 
of $K$-theoretic invariants for simple graph $C^*$-algebras, 
we only need to consider simple purely infinite graph $C^*$-algebras. 
We know that the pre-order of the $K_0$-group has to be trivial. 
We also know that the $K_1$-group has to be free 
by Proposition~\ref{K-group-computation-prop}.   Szyma\'nski proved that these are the only restrictions on the $K$-groups, and we prove a sharper version of his result below (see Proposition~\ref{realize-simple-K-theories-prop}) that gives us extra control of the choice of a graph.

\begin{lemma} \label{exact-sequence-exists-lem}
Let $G$ be a countable abelian group and let $F$ be a countable free abelian group.  Then there exists an exact sequence
$$
\xymatrix{
0 \ar[r] & F \ar[r]^{\iota} & \Z^\infty \ar[r]^{\phi} & \Z^\infty \ar[r]^{\pi} & G \ar[r] & 0.
}
$$
\end{lemma}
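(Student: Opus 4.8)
The plan is to build the sequence in two stages: first construct the surjection $\pi \colon \Z^\infty \to G$, and then realize its kernel as a quotient of $\Z^\infty$ whose own kernel is isomorphic to $F$. The structural facts I will lean on throughout are that every subgroup of $\Z^\infty$ is free abelian, that a countable free abelian group of infinite rank is isomorphic to $\Z^\infty$, and that $\Z^\infty \oplus \Z^\infty \cong \Z^\infty$.

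First I would produce a surjection onto $G$ whose kernel has infinite rank. Since $G$ is countable, enumerating its elements as $g_1, g_2, \dots$ and setting $\delta_n \mapsto g_n$ gives a surjection $\pi_1 \colon \Z^\infty \to G$; its kernel $K_1 = \ker \pi_1$ is a subgroup of $\Z^\infty$, hence countable free abelian. To force the kernel to have infinite rank no matter what $G$ is, I would pad: identifying $\Z^\infty \cong \Z^\infty \oplus \Z^\infty$, define $\pi(x,y) := \pi_1(x)$. Then $\pi$ is still surjective and $K := \ker \pi = K_1 \oplus \Z^\infty$, which is countable free abelian of infinite rank, so $K \cong \Z^\infty$.

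With $K$ in hand I would assemble the rest. Because $K$ is free of infinite rank, $F \oplus K$ is a countable free abelian group of infinite rank, hence $F \oplus K \cong \Z^\infty$. Fixing such an isomorphism and composing $\Z^\infty \xrightarrow{\cong} F \oplus K$ with the projection onto $K$ yields a map $\psi \colon \Z^\infty \to K$ that is surjective with $\ker \psi$ equal to the image of the $F$-summand, so $\ker \psi \cong F$. I then set $\phi$ to be $\psi$ followed by the inclusion $K \hookrightarrow \Z^\infty$, and let $\iota$ be the composite $F \hookrightarrow F \oplus K \xrightarrow{\cong} \Z^\infty$. By construction $\iota$ is injective, $\im \iota = \ker \phi$, and $\im \phi = K = \ker \pi$, while $\pi$ is surjective; hence the four-term sequence is exact.

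The only genuine subtlety, and the step I expect to require the most care, is the padding argument. Without it the construction can fail: if, for instance, $G = \Z^\infty$ and $F = \Z$, then the identity presentation of $G$ has trivial kernel, leaving no room to split off a copy of $F$. Arranging that $\ker \pi$ has infinite rank is precisely what guarantees $F \oplus \ker \pi \cong \Z^\infty$ for an \emph{arbitrary} prescribed free group $F$, whether $\rank F$ is finite or infinite, after which everything else is purely formal.
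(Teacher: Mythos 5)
Your proof is correct and takes essentially the same approach as the paper's: both pad the surjection $\pi$ so that its kernel is free of countably infinite rank (hence isomorphic to $\Z^\infty$), and then realize $\phi$ by decomposing the middle copy of $\Z^\infty$ as (a copy of $F$) $\oplus$ (a copy of $\ker \pi$), killing the $F$-summand and mapping the complement isomorphically onto $\ker \pi$. The differences are purely cosmetic: the paper builds its surjection from a generating set and carries the explicit direct sums $\Z^\infty \oplus \Z^n$ and $\Z^\infty \oplus \Z^m$ until the final identification with $\Z^\infty$, whereas you enumerate the elements of $G$ and work with internal decompositions from the start.
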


\begin{proof}
Let $X$ be a generating set for $G$, which must be countable since $G$ is countable. Let $\mathcal{F}(X)$ be the free abelian group generated by $X$. Then $\mathcal{F}(X) \cong \Z^{m}$, where $m=|X|$, and by the universal property of free abelian groups there exists a surjective homomorphism $\pi_0 : \Z^{m}\to G$.  Define $\pi : \Z^\infty \oplus \Z^m \to G$ by $\pi (x,y) := \pi_0(y)$.  Since $\ker \pi_0$ is a subgroup of a countable free abelian group, $\ker \pi$ is a countable free abelian group.  Moreover, since $\Z^\infty \oplus 0 \subseteq \ker \pi$, it follows that there is an isomorphism $\psi : \Z^\infty \to \ker \pi$.  In addition, since $F$ is a countable free abelian group there exists an isomorphism $i_0 : F \to \Z^n$ for some $n \in \{0, 1, 2, \ldots, \infty \}$.  Define $\phi : \Z^\infty \oplus \Z^n \to \Z^\infty \oplus \Z^m$ by $\phi(x,y) = \psi(x)$, and define $i: F \to \Z^\infty \oplus \Z^n$ by $i(x) := (0, i_0(x))$.  One can verify that 
$$
\xymatrix{
0 \ar[r] & F \ar[r]^<>(.5){i} & \Z^\infty \oplus \Z^n \ar[r]^{\phi} & \Z^\infty \oplus \Z^m \ar[r]^<>(.5){\pi} & G \ar[r] & 0
}
$$
is exact.  Since $Z^\infty \oplus \Z^n \cong \Z^\infty$ and $Z^\infty \oplus \Z^m \cong \Z^\infty$, the result follows. 
\end{proof}

\begin{proposition}[Szyma\'nski's Theorem] 
\label{realize-simple-K-theories-prop}
Let $G$ be a countable abelian group and let $F$ be a countable free abelian group.  Then there exists a row-finite graph $E$ 
with countably infinite $E^0$ 
that satisfies the following properties:
\begin{enumerate}[(1)]
\item Every vertex in $E$ is the base point of at least two loops, 
\item $E$ is transitive (so that, in particular, $C^*(E)$ is simple and purely infinite),
\item $K_0(C^*(E)) \cong G$ and $K_1(C^*(E)) \cong F$, and
\item $C^*(E)$ is stable.
\end{enumerate}
\end{proposition}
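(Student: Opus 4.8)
The plan is to realize the prescribed $K$-groups through the computation $K_0(C^*(E))\cong\coker(R_E-I)$ and $K_1(C^*(E))\cong\ker(R_E-I)$ of Proposition~\ref{K-group-computation-prop}, thereby reducing the whole statement to a purely matricial realization problem. First I would feed $G$ and $F$ into Lemma~\ref{exact-sequence-exists-lem} to obtain a column-finite $\Z$-module map $\phi\colon\Z^\infty\to\Z^\infty$ with $\ker\phi\cong F$ and $\coker\phi\cong G$ (the kernel being the image of $F$, the cokernel being $G$, both by exactness). Since for $U,V\in GL_\infty(\Z)$ the map $U\phi V$ has kernel and cokernel isomorphic to those of $\phi$, and since adjoining an identity block $\phi\oplus I$ changes neither group, it suffices to produce a row-finite graph $E$ with countably infinite vertex set, all of whose vertices are regular, such that $R_E-I$ is carried to $\phi$ by a finite sequence of integer row and column operations together with such enlargements. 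Then part~(3) holds automatically.

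The heart of the argument is therefore to manufacture a non-negative, column-finite matrix $R=R_E$ with $R(v,v)\geq 2$ for every $v$ (giving two loops at each vertex, hence no sinks and all vertices regular, while column-finiteness forces row-finiteness of $E$) whose associated graph is transitive, and for which $R-I$ lies in the $GL_\infty(\Z)$-equivalence class of $\phi$ after enlargement. I would build $R$ in stages. Writing the matrix of $\phi$ as $\Phi=(a_{ij})$, I would first clear the negative entries: for each negative entry I introduce an auxiliary coordinate (a new vertex) and use integer row and column operations against the resulting identity block to transfer the offending $-a_{ij}$ onto non-negative positions, arranging simultaneously that every diagonal entry of the transformed matrix is at least $1$ (so that $R$ has diagonal $\geq 2$). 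This yields a non-negative $R_0$ with the correct kernel and cokernel but whose graph $E_0$ need not be connected. I would then append further auxiliary vertices carrying two loops and draw connecting edges, from $E_0$ into the new vertices and back, so as to make the total graph strongly connected; the edges are chosen so that the blocks $C,D$ they contribute satisfy $\begin{pmatrix} R_0-I & C \\ D & I\end{pmatrix}\sim\begin{pmatrix} R_0-I & 0 \\ 0 & I\end{pmatrix}$, i.e.\ so that the connecting data is absorbed by the operations $\begin{pmatrix}I & -C\\0&I\end{pmatrix}$ and $\begin{pmatrix}I&0\\-D&I\end{pmatrix}$, with the product $CD$ they produce either vanishing or routed back into $R_0-I$ without altering its equivalence class. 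Keeping this bookkeeping consistent while still achieving transitivity is the main obstacle, and it is exactly where the freedom to add as many auxiliary vertices as needed (keeping $E^0$ countable) is used.

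Once such an $E$ is in hand, the remaining properties follow formally. Transitivity together with the presence of a loop places $C^*(E)$ in the purely infinite branch of the Dichotomy for simple graph $C^*$-algebras, giving~(2): $C^*(E)$ is simple and purely infinite. Because $E^0$ is infinite the algebra is non-unital, and a simple, separable, purely infinite $C^*$-algebra that is non-unital is stable (Zhang's dichotomy), giving~(4). Property~(1) as well as the row-finiteness and cardinality of $E^0$ are built into the construction, and~(3) holds by the reduction of the first paragraph. I expect the delicate point throughout to be the simultaneous enforcement of non-negativity, two loops at every vertex, and transitivity without disturbing $\ker(R_E-I)$ and $\coker(R_E-I)$; the order on $K_0$ plays no role here, which is precisely what makes the positivity bookkeeping manageable in the purely infinite setting.
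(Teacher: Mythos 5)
Your reduction to a matricial problem (via Proposition~\ref{K-group-computation-prop} and Lemma~\ref{exact-sequence-exists-lem}) and your derivation of properties (1), (2), (4) from the construction (dichotomy for simple graph algebras, Zhang's theorem) coincide with the paper's proof. The gap is that the actual construction --- the only non-routine part of the theorem --- is never carried out. You describe what the matrix $R_E$ must satisfy and then explicitly set aside the simultaneous enforcement of non-negativity, two loops per vertex, transitivity, and preservation of $\ker$ and $\coker$ as ``the main obstacle''; but that obstacle \emph{is} the theorem. Two concrete points where your plan, as stated, would fail. First, ``a finite sequence of integer row and column operations'' cannot suffice: $\phi$ may have infinitely many negative entries, so one needs globally defined column-finite invertible matrices (together with a check that their inverses are also column-finite), not finite products of elementary operations. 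Second, and more seriously, your two-stage scheme (clear negatives first, patch in connectivity afterwards) runs into a forced obstruction: if the new vertices carry only loops and each of them both emits into and receives from the old part --- which strong connectivity through them requires --- then the Schur-type elimination shows that the $K$-theory of the enlarged graph is computed by $R_0-I-CD$, and $CD$ necessarily has a strictly positive entry (for each new vertex $k$ there are $i_0,j_0$ with $C(i_0,k)>0$ and $D(k,j_0)>0$, whence $(CD)(i_0,j_0)>0$ since all entries are non-negative). So the equivalence class is genuinely disturbed, and your proposed remedy --- that $CD$ is ``routed back into $R_0-I$ without altering its equivalence class'' --- is precisely the unproved statement.

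The paper resolves this by building positivity and connectivity into the matrix \emph{before} the elimination, so that the correction term cancels by design: set $A_1:=|A_0|+B$, where $B$ is the band matrix with $1$'s on the three central diagonals, and $A:=\left(\begin{smallmatrix} A_0+A_1+I & A_1 \\ I & 2I\end{smallmatrix}\right)$. Then $A-I=\left(\begin{smallmatrix} I & A_1 \\ 0 & I\end{smallmatrix}\right)\left(\begin{smallmatrix} A_0 & 0 \\ 0 & I\end{smallmatrix}\right)\left(\begin{smallmatrix} I & 0 \\ I & I\end{smallmatrix}\right)$, with the outer factors invertible column-finite matrices, so $\ker(A-I)\cong\ker A_0\cong F$ and $\coker(A-I)\cong\coker A_0\cong G$. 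Here the correction term is exactly $A_1$ and is absorbed because the upper-left block was chosen as $A_0+A_1+I$ in advance, while $|A_0|$ guarantees non-negativity, the diagonal entries are $\geq 2$ (giving the two loops), and $B$ together with the identity blocks guarantees transitivity. If you want to salvage your staged approach you would have to exhibit such an explicit cancellation; the one-shot factorization is the clean way to do it.
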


\begin{proof}
By Lemma~\ref{exact-sequence-exists-lem} there exists a group homomorphism $\phi : \Z^\infty \to \Z^\infty$ with $\coker \phi \cong G$ and $\ker \phi \cong F$.  Let $A_0 \in M_\infty (\Z)$ be the matrix representation of $\phi$.  Then $A_0$ is a column-finite matrix.  Define $|A_0| \in M_\infty (\Z)$ to be the entry-wise absolute value of $A_0$; i.e., $|A_0| (i,j) := |A_0(i,j)|$.  
Also define 
\[
A_1 := |A_0| + \begin{pmatrix} 
1 & 1 & 0 & 0 &  \cdots \\
1 & 1 & 1 & 0 &  \cdots \\
0 & 1 & 1 & 1 &  \\
0 & 0 & 1 & 1 &  \\
\vdots & \vdots &  & & \ddots 
\end{pmatrix}. 
\]
Let $\I$ denote the identity matrix in $M_\infty (\Z)$, and define 
\[
A := \begin{pmatrix} A_0 + A_1 + I & A_1 \\ I & 2 I \end{pmatrix}.
\]
We observe that $A$ is a column finite square  matrix with non-negative entries, and also observe that the diagonal is everywhere greater or equal to 2.
Hence we can find a graph $E$ with no singular vertices 
such that $R_E=A$. 
Since $A$ is indexed by a countably infinite set, 
the set $E^0$ of vertices is countable and infinite. 
Since every vertex in $E$ is regular, $E$ is row-finite. 
In addition, the fact that every diagonal entry of $A$ is two or larger 
shows that every vertex in $E$ is the base point of at least two loops.  
Furthermore, one can see from the definition of $A$ that $E$ is transitive.  
Finally, \nobox{we have} 
\[
A-I = \begin{pmatrix} A_0 + A_1 & A_1 \\ I & I \end{pmatrix}
=\begin{pmatrix} I & A_1 \\ 0 & I \end{pmatrix}
\begin{pmatrix} A_0 & 0 \\ 0 & I \end{pmatrix}
\begin{pmatrix} I & 0 \\ I & I \end{pmatrix}.
\]
Since the matrices 
$\left( \begin{smallmatrix} I & A_1 \\ 0 & I \end{smallmatrix} \right)$ and 
$\left( \begin{smallmatrix} I & 0 \\ I & I \end{smallmatrix} \right)$ 
are invertible (with the inverses 
$\left( \begin{smallmatrix} I & -A_1 \\ 0 & I \end{smallmatrix} \right)$ and 
$\left( \begin{smallmatrix} I & 0 \\ -I & I \end{smallmatrix} \right)$ ), 
$A-I$ has a kernel and cokernel which is isomorphic to those of  the matrix $\left( \begin{smallmatrix} A_0  & 0 \\ 0 & I \end{smallmatrix} \right)$, 
\nobox{and hence} to those of  $A_0$. 
Thus
$$K_0(C^*(E)) \cong \coker (A-I) \cong \coker A_0 \cong G$$ and $$K_1(C^*(E)) \cong \ker (A-I) \cong \ker A_0 \cong F.$$
Finally, since 
$C^*(E)$ is a nonunital Kirchberg algebra, it is stable by \cite{zhangdicho}.
\end{proof}

\begin{remark}
Proposition~\ref{realize-simple-K-theories-prop}, together with Lemma~\ref{exact-sequence-exists-lem}, gives a shorter proof of Szyma\'nski's Theorem \cite[Theorem~1.2]{Szy2} \nobox{with} the added conclusion (1).
\end{remark}

The following proposition summarizes the arguments above. 

\begin{proposition}
\label{realize-simple-AF-K-theories-prop}
The class of graph $C^*$-algebras that are either AF or simple 
is classified up to stable isomorphism 
by $K_0$-groups as pre-ordered abelian groups 
and $K_1$-groups as abelian groups. 
A pair $(G,F)$ of a countable pre-ordered abelian group $G$ 
and a countable abelian group $F$ 
is in the range of invariants in this class if and only if either 
\begin{itemize}
\item $G$ is a Riesz group and $F=0$, or 
\item $G$ is trivially preordered and $F$ is free. 
\end{itemize}
Moreover, one can realize the above invariants 
by a stable graph $C^*$-algebra $C^*(E)$ for 
a row-finite graph $E$ with no sinks and no sources. 
\end{proposition}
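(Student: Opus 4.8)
The plan is to assemble the statement from the three standard components of a range computation---a classification theorem, the list of obstructions, and a realization result---handling the AF case and the simple purely infinite case separately and then checking that the pre-order on $K_0$ keeps the two cases apart. Everything needed has essentially been prepared: the classification will cite known theorems, the obstructions will be read off from the preliminaries, and the realizations will be supplied by Proposition~\ref{realize-simple-K-theories-AF-prop} and Proposition~\ref{realize-simple-K-theories-prop}.

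For the classification, I would first invoke the Dichotomy for simple graph $C^*$-algebras \cite[Remark~2.16]{DT1}: a simple graph $C^*$-algebra is either AF or purely infinite, so every member of the class is an AF-algebra or a Kirchberg algebra in the bootstrap class. Elliott's Theorem \cite{Ell2} classifies AF-algebras up to stable isomorphism by $(K_0, K_0^+)$, while the Kirchberg--Phillips Theorem \cite{Kir3, Phi} classifies the purely infinite simple ones by $(K_0, K_1)$. To merge these into one invariant on the combined class I would use the pre-order as a discriminator: an AF-algebra has trivial $K_1$ and a Riesz-ordered $K_0$ whose cone is proper (for nonzero algebras), whereas a simple purely infinite algebra has trivially pre-ordered $K_0$. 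Hence $(K_0, K_0^+, K_1)$ detects which subclass an algebra lies in, and no algebra in one subclass can share its invariant with one in the other; this makes the pair a complete stable isomorphism invariant on the whole class.

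The obstructions then follow immediately: an AF-algebra has $K_1 = 0$ and a countable Riesz $K_0$, while a simple purely infinite graph $C^*$-algebra has trivially pre-ordered $K_0$ and, by Proposition~\ref{K-group-computation-prop}, free $K_1$. This gives exactly the two admissible forms of $(G, F)$. For the converse I would split into cases. When $G$ is Riesz and $F = 0$, Proposition~\ref{realize-simple-K-theories-AF-prop} yields a row-finite, cycle-free, stable graph with no sinks and no sources and $K_0 \cong (G, G^+)$; being AF it has $K_1 = 0$. When $G$ is trivially pre-ordered and $F$ is free, Proposition~\ref{realize-simple-K-theories-prop} yields a row-finite, stable, transitive graph with $K_0 \cong G$ and $K_1 \cong F$, and since $C^*(E)$ is then simple purely infinite, its $K_0$ is automatically trivially pre-ordered, matching the hypothesis $G^+ = G$.

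The step requiring the most care is confirming that both realizing graphs meet the extra ``no sinks and no sources'' requirement and that the order genuinely separates the two cases. For the AF graph this is part of the conclusion of Proposition~\ref{realize-simple-K-theories-AF-prop}; for the graph of Proposition~\ref{realize-simple-K-theories-prop} I would note that it has no singular vertices (hence no sinks) and that its transitivity forces every vertex to receive an edge (hence no sources). The only delicate point is the degenerate value $G = 0$, $F = 0$, where the two bullet conditions overlap because the zero group is simultaneously Riesz and trivially pre-ordered; there the realization is furnished by the purely infinite case, and the proper-cone remark above shows that for every \emph{nonzero} $G$ the pre-order unambiguously distinguishes the AF algebras from the purely infinite ones, so the classification is not endangered.
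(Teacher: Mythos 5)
Your proposal is correct and follows essentially the same route as the paper, which itself presents this proposition as a summary of the preceding arguments: the Dichotomy plus Elliott and Kirchberg--Phillips for the classification, the pre-order on $K_0$ (trivial for purely infinite, proper Riesz cone for AF) as the discriminator between the two subclasses, freeness of $K_1$ from Proposition~\ref{K-group-computation-prop} as the obstruction, and Propositions~\ref{realize-simple-K-theories-AF-prop} and \ref{realize-simple-K-theories-prop} for the realizations. Your verification of the no-sinks/no-sources condition and your treatment of the degenerate overlap at $G=0$, $F=0$ are both sound (indeed slightly more careful than the paper's own exposition).
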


In order to get a finer invariant for 
the classification up to isomorphism, 
one needs to consider scales of $K_0$-groups in the AF case, 
and positions of units in $K_0$-groups 
in unital simple purely infinite case. 
With this extra data, one can classify, up to isomorphism,  
the  all graph $C^*$-algebras that are either AF or simple.
However, the computation of the range of this finer invariant 
is not as straightforward as above. 
In fact, the (nonunital, but nonstable) case of AF graph $C^*$-algebras 
is very complicated (see \cite{tkasmt:ragaelaua}).
For the nonunital simple purely infinite case, 
$K_0$-groups and $K_1$-groups are already 
a complete invariant up to isomorphism 
since they are stable. 
In what follows, 
we complete the computation of the range of the invariant 
for the unital simple purely infinite case. 

For a unital simple purely infinite $C^*$-algebra $A$, 
the extra information we need for classification 
up to isomorphism is the element $[1_A]_0 \in K_0(A)$ 
defined by the unit $1_A$ of $A$. 
For a group $G$ and an element $g_0$, 
we write $(K_0(A),[1_A]_0) \cong (G,g_0)$ 
if there exists an isomorphism from $K_0(A)$ to $G$ 
sending $[1_A]_0$ to $g_0$. 

Recall that a graph $C^*$-algebra $C^*(E)$ is unital 
if and only if the set $E^0$ of vertices is finite, 
and in this case the unit $1_{C^*(E)}$ is $\sum_{v \in E^0}p_v$. 
Under the isomorphism 
$K_0(C^*(E)) \cong \coker (R_E-I)$ 
in Proposition~\ref{K-group-computation-prop} 
the element $[1_{C^*(E)}]_0$ corresponds to the equivalence class
$[{\mathbf 1}]$ 
where ${\mathbf 1} := \sum_{v \in E^0}\delta_v \in \Z^{E^0}$ 
is the vector all of whose entries are $1$. 

Note that if $E^0$ is finite, 
then Proposition~\ref{K-group-computation-prop} 
and \eqref{rank-computation} 
shows that the two groups $G := K_0(C^*(E))$ and $F := K_1(C^*(E))$ 
satisfy 
\begin{itemize}
\item $G$ is a finitely generated abelian group, 
\item $F$ is a finitely generated free abelian group 
with $\rank F \leq \rank G$.  
\end{itemize}
The following proposition shows that 
these are the only restrictions for the $K$-groups, 
and there is no restriction on the position of $[1_{C^*(E)}]$ 
in $K_0(C^*(E))$. 
In this case, as opposed to what we saw above, not every vertex may be chosen regular.

\begin{proposition} \label{realize-unital-algebras}
Let $G$ be a finitely generated abelian group, and let $F$ be a free abelian group with $\rank F \leq \rank G$.  
Let $g_0$ be an element of $G$. 

Then there exists a graph $E$ with finite $E^0$ 
that satisfies (1)--(2) of Proposition \ref{realize-simple-AF-K-theories-prop} as well as
\begin{enumerate}[(1')]\addtocounter{enumi}{2}
\item $(K_0(C^*(E)),[1_{C^*(E)}]_0) \cong (G,g_0)$ 
and $K_1(C^*(E)) \cong F$.
\end{enumerate}
\end{proposition}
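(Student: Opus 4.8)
The plan is to mimic the doubling construction in the proof of Proposition~\ref{realize-simple-K-theories-prop}, but to work with a finite, square integer matrix chosen so that (a) the number of singular vertices accounts for the gap $\rank G-\rank F$, and (b) the position of the class of the unit can be steered to $g_0$. Throughout I write $G\cong \Z^k\oplus\bigoplus_{i=1}^r\Z/n_i$ with $k=\rank G$, and $F\cong\Z^\ell$ with $\ell=\rank F\le k$; set $d:=k-\ell$. By Proposition~\ref{K-group-computation-prop} it suffices to produce a finite graph $E$, satisfying conditions (1)--(2) (every vertex is the base point of at least two loops, and $E$ is transitive), with $\coker(R_E-I)\cong G$ carrying $[\mathbf 1]$ to $g_0$ and $\ker(R_E-I)\cong F$; recall that $[1_{C^*(E)}]_0$ corresponds to the class of $\mathbf 1$, and that by \eqref{rank-computation} such an $E$ must have exactly $d$ singular vertices.

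First I would fix a presentation of $G$ adapted to the unit. The key point is to choose a \emph{symmetric} finite generating set $g_1,\dots,g_m$ of $G$, namely the free generators together with their negatives and a set of torsion generators, so that the subsemigroup of $G$ generated by $\{g_1,\dots,g_m\}$ is closed under negation and hence equals $G$. Writing the defining relations among these generators as the columns of an entrywise nonnegative matrix (the relation $e_i+(-e_i)=0$ contributes a column with two $+1$'s, and $n_jt_j=0$ a column $n_j e$), and padding with enough zero columns, one obtains a square nonnegative $A_0\in M_m(\Z)$ with $\coker A_0\cong G$; rank-nullity then forces $\ker A_0\cong\Z^k$. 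The quotient map $q\colon\Z^m\to G$ sends each standard basis vector to a $g_i$, and by symmetry it maps the nonnegative cone \emph{onto} $G$, which is exactly what will let me position the unit.

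Next I would run the doubling: with a nonnegative $A_1$ to be chosen, form
\[
A=\begin{pmatrix} A_0+A_1+I & A_1\\ I & 2I\end{pmatrix},
\]
which is nonnegative with diagonal at least $2$, and for which the same factorization as in the proof of Proposition~\ref{realize-simple-K-theories-prop} gives $\coker(A-I)\cong\coker A_0\cong G$ and $\ker(A-I)\cong\ker A_0\cong\Z^k$, with the class $[\mathbf 1]$ of the all-ones vector corresponding under these isomorphisms to $q(\mathbf 1)-q(A_1\mathbf 1)$ (here $\mathbf 1\in\Z^m$). Choosing $A_1$ also suitably connected makes the associated graph on $2m$ vertices transitive with two loops at every vertex. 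To cut $K_1$ down from $\Z^k$ to $F\cong\Z^\ell$, I would use that the $k$ columns of $A-I$ arising from the zero columns of $A_0$ occur as coincident pairs; deleting $d$ of these duplicate columns leaves the image — hence the cokernel and the class of $\mathbf 1$ — unchanged while dropping the kernel by $\Z^d$. Deleting a column amounts to declaring the corresponding vertex an infinite emitter, and since the regular vertex matrix ignores edges out of singular vertices, I am then free to attach infinitely many edges (including two loops and edges restoring transitivity) at each such vertex without disturbing the $K$-theory.

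It remains to steer the unit, which is the one genuinely new difficulty compared with the nonunital Szyma\'nski theorem. Under the identifications above one has $[\mathbf 1]=q(\mathbf 1)-q(A_1\mathbf 1)$ in $G$, where $q(A_1\mathbf 1)$ is the class of the vector of row sums of $A_1$, and these row sums may be prescribed to be any sufficiently large nonnegative vector. Because the generating set was chosen symmetric, $q$ carries the large nonnegative vectors onto all of $G$, so I can select $A_1$ — nonnegative, with the connectivity and diagonal needed above, and with row sums in the class $q(\mathbf 1)-g_0$ — so that $[\mathbf 1]=g_0$. Assembling the pieces then yields the required graph. The main obstacle is precisely this simultaneous bookkeeping: arranging nonnegativity, transitivity, both prescribed $K$-groups, and the position of the order unit at once; the device that makes it go through is the symmetric generating set, which converts the unit constraint into a solvable positivity problem for the row sums of $A_1$.
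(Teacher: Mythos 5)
Your proof is correct, but it takes a genuinely different route from the paper's. The paper starts from the minimal presentation of $G$ coming from the structure theorem, with generators $\gamma_1,\dots,\gamma_{k+n}$ plus one dummy generator, and encodes the relations in a \emph{rectangular} diagonal matrix $A_0\in M_{1+k+n,\,1+k+n'}(\Z)$, so that the $n-n'$ missing columns are the future singular vertices from the outset; positivity is then achieved not by doubling but by passing to $A=PA_0Q$ for explicit invertible triangular matrices $P,Q$, and the unit is steered through $P$: writing $g_0=\sum a_i\gamma_i$ with all $a_i\le 0$ (arranged using the torsion relations and sign flips of generators), the first column of $P$ consists of the integers $b_i=1-a_i\ge 1$, which simultaneously makes $A$ entrywise positive and gives $P^{-1}\mathbf{1}=\delta_0+\sum a_i\delta_i$, hence $[\mathbf{1}]\mapsto g_0$. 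You instead buy nonnegativity of the presentation matrix up front via the redundant symmetric generating set, reuse the doubling of Proposition~\ref{realize-simple-K-theories-prop} verbatim, create the singular vertices afterwards by deleting coincident columns, and steer the unit through the row sums of $A_1$, using that symmetry makes $q$ surjective on the positive cone. The two delicate points in your argument both check out: the columns of $A-I$ over the padded zero columns of $A_0$ really do occur in coincident pairs, so deletion preserves the image (hence the cokernel and the class of $\mathbf{1}$) while cutting the kernel by exactly $\Z^d$; and row sums of $A_1$ can indeed be prescribed to be any entrywise large vector in a given $q$-class, since nonnegative elements of $\ker q$ supported on every coordinate are available. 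The trade-off is modularity versus economy: your three mechanisms (doubling for positivity, column deletion for $K_1$, row sums for the unit) are each easy to verify independently, whereas the paper's single conjugation trick handles everything at once; but the paper's construction yields a graph with exactly $1+k+n$ vertices, which Remark~\ref{smallest-vertices-remark} shows is essentially optimal and which is used downstream (the explicit matrix underlies Propositions~\ref{realize-unital-algebras-2} and~\ref{realize-CK-algebras}, and the vertex counts in Theorems~\ref{one-ideal-finite-vertices-range-thm} and~\ref{one-ideal-CK-range-thm}), while your graph has $2(2\operatorname{rank}G+r)$ vertices ($r$ the number of torsion generators), so those later refinements would require separate arguments.
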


\begin{proof}
By the fundamental theorem of finitely generated abelian groups,
there exist unique integers $k, n \geq 0$ 
and $m_1,m_2, \ldots, m_k \geq 2$ 
with $m_1 \mid m_2 \mid \cdots \mid m_k$
such that 
\begin{equation}\label{groupform}
G \cong 
(\Z/m_1\Z) \oplus (\Z/m_2\Z) \oplus \cdots \oplus (\Z/m_k\Z) \oplus \Z^n. 
\end{equation}
We can then find a generating set $\{\gamma_i\}_{i=1}^{k+n}$ of $G$ 
with the relations $m_i \gamma_i =0$ for $i=1,\ldots,k$. 
There exist integers $(a_i)_{i=1}^{k+n}$ 
such that $g_0=\sum_{i=1}^{k+n} a_i \gamma_i$. 
By subtracting $m_i$ from $a_i$ for $i\leq k$ many times 
and replacing $\gamma_i$ with $-\gamma_i$ for $i>k$ if necessary, 
we may assume that $a_i \leq 0$ for all $i$. 
Let $n' = \rank F$, which is at most $\rank G =n$ by the assumption. 

We denote the bases of $\Z^{1+k+n}$ and $\Z^{1+k+n'}$ 
by $\{\delta_i\}_{i=0}^{k+n}$ and $\{\delta_i\}_{i=0}^{k+n'}$ 
(the indices have been shifted compared to the convention in Section~\ref{prelim-sec}). 
We define a surjective map $\pi_0\colon \Z^{1+k+n} \to G$ 
by $\pi_0(\delta_0)=0$ and $\pi_0(\delta_i)=\gamma_i$ 
for $i=1,2,\ldots,k+n$, and we define $A_0 \in M_{1+k+n,1+k+n'}(\Z)$ 
by $A_0(\delta_0)=\delta_0$, $A_0(\delta_i)=m_i\delta_i$, 
for $i=1,\ldots,k$ and $A_0(\delta_i)=0$ 
for $i=k+1,\ldots,k+n'$. 
Then we have $\im A_0 = \ker \pi_0$. 
In matrix form, we have 
\[
A_0=
\left(\begin{smallmatrix}
1 & & & &   & & \\
 & m_1 & & \nobigzerol &  & \nobigzerol & \\
 & & m_2 & &  &  & \\
 & \nobigzerol & & \ddots &   & & \\
 & & & & m_k  & & \\ 
& \nobigzerou & & &   &0 &\\
&&&&&&\ddots 
\end{smallmatrix}\right).
\]

We set $b_i=1-a_i \geq 1$ for $i=1,2,\ldots,k+n$. 
Consider two square matrices $P \in M_{1+k+n,1+k+n}(\Z)$ 
and $Q \in M_{1+k+n',1+k+n'}(\Z)$ by 
\begin{align*}
P&=
\left(\begin{smallmatrix}
1 & & & &   & & \\
b_1 & 1 & & &  & & \\
b_2 & & 1 & &  & \nobigzerol & \\
\vdots & & & \ddots &   & & \\
b_k & & & & 1  & & \\ 
\vdots & & \nobigzerou & &   & \ddots& \\
b_{k+n}& & & &   & & 1
\end{smallmatrix}\right),&
Q&=
\left(\begin{smallmatrix}
1 & 1& \cdots & 1  & \cdots & 1\\
 & 1 & &  & & \\
 & & \ddots &   & \nobigzerol & \\
 & & & 1  & & \\ 
 & & \nobigzerou &   & \ddots& \\
 & & &   & & 1
\end{smallmatrix}\right).
\end{align*}
Note that $P$ and $Q$ are invertible. 
We set $A \in M_{1+k+n,1+k+n'}(\Z)$ by 
\[
A=PA_0Q=
\begin{pmatrix}
1 & 1& 1& \ldots & 1& \ldots & 1\\
b_1 & b_1+m_1 & b_1& \ldots & b_1 & \ldots & b_1\\
b_2 & b_2 & b_2+m_2 & & b_2 & \ldots & b_2 \\
\vdots  &\vdots & & \ddots &   & & \vdots\\
b_k & b_k& b_k & & b_k+m_k  & \vdots &b_k \\ 
\vdots& \vdots &\vdots & & \cdots  & & \vdots \\
b_{k+n}& b_{k+n} & b_{k+n} & \cdots & b_{k+n}  & \cdots & b_{k+n}
\end{pmatrix}.
\]
Since $b_i \geq 1$ and $m_i \geq 2$, 
all entries of $A$ are positive. 
We define $\pi\colon \Z^{1+k+n} \to G$ by $\pi = \pi_0\circ P^{-1}$. 
Then $\pi$ is a surjection satisfying 
\[
\ker \pi = P(\ker \pi_0)
= P(\im A_0) = \im(PA_0)=\im A, 
\]
and hence $\pi$ induces an isomorphism 
\[
\bar{\pi}\colon \coker A \ni [x] \mapsto \pi(x) \in G,
\]
and therefore $\ker A$ is a free abelian group whose rank is 
$n+(1+k+n')-(1+k+n)=n'$. 

We set ${\mathbf 1} = \sum_{i=0}^{k+n}\delta_i \in \Z^{1+k+n}$. 
We are going to show $\bar{\pi}([{\mathbf 1}]) = g_0$. 
Since $b_i+a_i=1$ for $i=1,2,\ldots,k+n$, 
we have 
\[
P\Big(\delta_0+\sum_{i=1}^{k+n}a_i\delta_i\Big)
= \sum_{i=0}^{k+n}\delta_i = {\mathbf 1},\quad 
\begin{pmatrix}
1 & & & \\
b_1 & 1 & & \\
\vdots & & \ddots & \\
b_{k+n}& & & 1
\end{pmatrix}
\begin{pmatrix}
1\\
a_1\\
\vdots\\
a_{k+n}
\end{pmatrix}
=
\begin{pmatrix}
1\\
1\\
\vdots\\
1
\end{pmatrix}.
\]
Hence we have 
\begin{align*}
\bar{\pi}([{\mathbf 1}])
&=\pi({\mathbf 1}) 
=\pi_0(P^{-1}({\mathbf 1})) =\pi_0\Big(\delta_0+\sum_{i=1}^{k+n}a_i\delta_i\Big) 
=\sum_{i=1}^{k+n}a_i \gamma_i
=g_0. 
\end{align*}
Let $E$ be the graph such that 
\begin{itemize}
\item $E^0=\{0,1,\ldots,k+n\}$, 
\item there are infinitely many edges from $i \in E^0$ 
with $k+n'<i\leq k+n$ to every vertex,  and
\item $0,1,\ldots,k+n'$ are regular, 
and the regular vertex matrix $R_E$ of $E$ is $A+I$. 
\end{itemize}
Then $E$ is a graph with $1+k+n$ vertices, 
each vertex is the base point of at least two loops, 
and $E$ is transitive.  
The computation of $K$-theory follows from 
Proposition~\ref{K-group-computation-prop} and the first part of this proof. 
\end{proof}

\begin{remark} \label{smallest-vertices-remark}
Take $G,F$ and $g_0 \in G$ as in Proposition~\ref{realize-unital-algebras}, 
and $k,n,m_1,\ldots,m_k$ as in \eqref{groupform}.
Below we will show that 
if a graph $E$ satisfies the condition (3') 
in Proposition~\ref{realize-unital-algebras} 
then the number $|E^0|$ of vertices of $E$ is at least $k+n$. 
We also show that in many cases 
including the case that $G,F$ are arbitrary, but $g_0=0$, 
$|E^0|$ needs to be bigger than $k+n$. 
Thus the number $1+k+n$ of vertices of the graph $E$ 
in the proof above is smallest possible in these cases. 

Observe that if $E$ satisfies the condition (3) 
then we have a surjective map $\Z^{E^0} \to G$ 
sending ${\mathbf 1}$ to $g_0$. 
Choose a prime number $p$ with $p \mid m_1$. 
Tensoring with $\Z / p\Z$, 
we get a surjective map 
\[
(\Z / p\Z)^{E^0} \cong \Z^{E^0} \otimes (\Z / p\Z) 
\to G\otimes (\Z / p\Z) \cong (\Z / p\Z)^{k+n}
\]
This shows that $|E^0| \geq k+n$. 
Since ${\mathbf 1}\otimes 1$ is a nonzero element of $(\Z / p\Z)^{E^0}$, 
if $g_0 \otimes 1 \in G\otimes (\Z / p\Z)$ is zero, 
then the above surjection is not injective. 
Thus in this case we have $|E^0| > k+n$. 

We also note that for arbitrary $G,F$, 
there exists $g_0 \in G$ such that 
we can find a graph $E$ satisfying 
the condition (3) with $|E^0| = k+n$.
However, in the case $G$ is free and $F \neq 0$, 
or in the case $G=F=0$, 
there exists no such $E$ with $C^*(E)$ is simple. 
Thus for such $G$ and $F$ and for arbitrary $g_0 \in G$, 
the graph $E$ constructed in the proof of 
Proposition~\ref{realize-unital-algebras} 
has the smallest possible number of vertices, namely $|E^0|=1+k+n$, 
such that $C^*(E)$ is simple and satisfies condition (3). 
For a pair $(G,F)$ other than the ones mentioned above, 
there exists some $g_0 \in G$ (necessarily nonzero) 
such that 
we can find a graph $E$ with $|E^0|=k+n$ 
satisfying the all three conditions in 
Proposition~\ref{realize-unital-algebras}. 
\end{remark}

We need the next small variation of 
Proposition~\ref{realize-unital-algebras} 
in order to control the unit in the extension. 
As explained in Remark~\ref{smallest-vertices-remark}, 
if $G,F$ and $g_0 \in G$ satisfies 
either $g_0=0$ or $G$ is free (and $F$ is nonzero) 
then a graph $E$ as in Proposition~\ref{realize-unital-algebras}
has at least $1+k+n$ vertices. 
If $G$ satisfies \emph{both} of the two conditions, 
then we need one more vertex to get the next result. 

\begin{proposition} \label{realize-unital-algebras-2}
Let $G,F$ and $g_0 \in G$ 
be as in Proposition~\ref{realize-unital-algebras}. 
Then there exists a graph $E$ with finite $E^0$
satisfying (1)--(3) in Proposition~\ref{realize-unital-algebras}, as well as
\begin{itemize}
\item[(4)] there exist two vertices $v, w \in E^0$ 
such that $(R_E-I)(w,v')<(R_E-I)(v,v')$ 
for all $v' \in E^0_\textnormal{reg}$.
\end{itemize}
When $G$ is written in the form of \eqref{groupform} we may choose $E$ with $|E^0|=1+k+n$.
\end{proposition}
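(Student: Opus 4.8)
The plan is to re-run the construction from the proof of Proposition~\ref{realize-unital-algebras}, exploiting the freedom in the choice of the coefficients $a_i$ to force a dominant row, and to treat separately the single case where this is impossible.

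I would first record what condition~(4) amounts to for the matrix $A = R_E - I = PA_0Q$ built in Proposition~\ref{realize-unital-algebras}: its $0$-th row is the all-ones vector, for $1 \le i \le k$ the $i$-th row equals $b_i = 1-a_i$ off the diagonal and $b_i + m_i$ on it, and for $k < i \le k+n$ the $i$-th row is constant equal to $b_i$. In every case each entry of the $i$-th row is at least $b_i$, so taking $v = i_0$ and $w = 0$ makes condition~(4) hold as soon as some $a_{i_0} \le -1$: then every entry of row $i_0$ is at least $b_{i_0} \ge 2$, strictly above the entry $1$ of row $0$. The task thus reduces to choosing $a_i \le 0$ with $\sum_i a_i \gamma_i = g_0$ and $a_{i_0} \le -1$ for some $i_0$.

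Next I would dispose of the cases where this reduction succeeds. If $G$ has torsion, i.e. $k \ge 1$, then I may assume $a_1 \le -1$, subtracting a further copy of $m_1$ from $a_1$ if needed; since $m_1\gamma_1 = 0$ this leaves $\sum_i a_i\gamma_i$ unchanged. If $G$ is free, i.e. $k = 0$, but $g_0 \ne 0$, then the coefficient vector $(a_1, \ldots, a_n)$ produced in Proposition~\ref{realize-unital-algebras} is already nonzero with all entries $\le 0$, so some $a_{i_0} \le -1$ automatically. In both situations the construction of Proposition~\ref{realize-unital-algebras} applies verbatim --- the verification that $\coker A \cong G$, that $\ker A \cong F$, and that $[\mathbf 1]$ maps to $g_0$ used only $b_i + a_i = 1$ --- and yields a graph with $|E^0| = 1 + k + n$ satisfying (1)--(3) together with~(4).

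This leaves the case that $G$ is free with $g_0 = 0$, which I expect to be the only real obstacle. Here every admissible choice forces $a_i = 0$, so $A$ is the all-ones matrix and no two of its rows are comparable; worse, $1+k+n$ vertices provably cannot work, since with $G \cong \Z^n$ and $g_0 = 0$ the relation $[\mathbf 1] = 0$ forces $\mathbf 1 \in \im A$, the column space then has rank one and is pure (as $\coker A$ is torsion-free), hence equals $\Z\mathbf 1$, so every column of $A$ is constant. I would resolve this by spending one additional vertex. Take $E^0 = \{0, 1, \ldots, n+1\}$ with $\{0, 1, \ldots, n'\} \cup \{n+1\}$ regular, and let $A = R_E - I$ have one column equal to $2\delta_0 + \mathbf 1$ and its remaining $1 + n'$ columns equal to $\delta_0 + \mathbf 1$, where $\mathbf 1 = \sum_{i=0}^{n+1}\delta_i$. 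These columns generate the rank-two pure sublattice $L = \Z\delta_0 + \Z\mathbf 1$, for which $\Z^{n+2}/L \cong \Z^n$ and $\mathbf 1 \in L$; thus $\coker A \cong \Z^n \cong G$ with $[\mathbf 1] = 0$, while $\ker A$ is free of rank $(2 + n') - 2 = n'$, so $K_1 \cong F$. The positive entries of $A$ give, as in Proposition~\ref{realize-unital-algebras}, transitivity and at least two loops at every vertex, and now row $0 = (3, 2, \ldots, 2)$ strictly dominates row $1 = (1, \ldots, 1)$ in every column, so~(4) holds. This uses $|E^0| = 2 + k + n$, the extra vertex announced before the statement; the generic cases of the previous paragraph are immediate book-keeping on top of Proposition~\ref{realize-unital-algebras}.
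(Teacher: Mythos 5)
Your proposal is correct and follows essentially the same route as the paper: reuse the matrix $A=PA_0Q$ from Proposition~\ref{realize-unital-algebras} and observe that any coefficient $a_{i}\le -1$ gives $b_{i}\ge 2$, so row $i$ strictly dominates the all-ones row $0$, handling all cases except $G$ free with $g_0=0$; in that exceptional case both you and the paper spend one extra vertex on an explicit $(n+2)\times(n'+2)$ matrix whose image is $\Z\delta_0+\Z\mathbf{1}$ (your choice of constant rows $(1,\dots,1)$ versus the paper's $(2,1,\dots,1)$ is immaterial). Your added observation that $1+n$ vertices provably cannot satisfy (4) in the exceptional case is a nice bonus consistent with Remark~\ref{smallest-vertices-remark}, but otherwise the two proofs coincide.
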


\begin{proof}
First consider the case $g_0\neq 0$. 
In this case we show that the graph $E$ constructed in the proof of 
Proposition~\ref{realize-unital-algebras} satisfies (4). 
For $(a_i)_{i=1}^{k+n}$ as in that proof, 
there exists $i \in \{1,2,\ldots,k+n\}$ with $a_i < 0$. 
Then we have $b_i \geq 2$. 
Hence $A(0,j) = 1 < b_i \leq A(i,j)$ for all $j = 0, 1, \ldots, k+n'$. 
Since $R_E-I = A$ for the graph $E$, 
the vertices $v=i$ and $w=0$ satisfy (4). 

Next consider the case $G$ is not free. 
Then we can choose $(a_i)_{i=1}^{k+n}$ 
in the proof of Proposition~\ref{realize-unital-algebras} 
such that $a_1 \leq -m_1 < 0$. 
As in the case $g_0\neq 0$, 
the vertices $v=1$ and $w=0$ satisfy (4). 

Finally suppose $g_0=0$ and $G$ is free. 
Let $n$ and $n'$ be the ranks of $G$ and $F$ respectively. 
We define a $(n+2)\times (n'+2)$ matrix $A$ by 
\[
A:=
\begin{pmatrix}
3 & 2 & \ldots & 2\\
2 & 1 & \ldots & 1 \\
2 & 1 & \ldots & 1 \\
\vdots & \vdots  & \ldots & \vdots\\
2 & 1 & \ldots & 1 
\end{pmatrix}, 
\]
whose image is generated by two elements 
\[
{\mathbf 1} := 
\begin{pmatrix}
1 \\ 1 \\ 1 \\ \vdots \\ 1
\end{pmatrix}
=A
\begin{pmatrix}
1 \\ -1 \\ 
0\vspace{-0.2cm} \\ \vdots\vspace{-0.1cm} \\ 0
\end{pmatrix}	 \quad \text{and}\quad 
\begin{pmatrix}
1 \\ 0 \\ 0 \\ \vdots \\ 0
\end{pmatrix}
=A
\begin{pmatrix}
-1 \\ 2 \\ 
0\vspace{-0.2cm} \\ \vdots\vspace{-0.1cm} \\ 0
\end{pmatrix}.
\]
Hence $\ker A \cong \Z^{n'}$, 
$\coker A \cong \Z^{n}$ and 
$[{\mathbf 1}]=0$ in $\coker A$. 
We define a graph $E$ so that 
\begin{itemize}
\item $E^0=\{1,2,\ldots,n+2\}$, 
\item there are infinitely many edges from $i \in E^0$ 
with $n'+2< i \leq n+2$ to every vertex, 
\item $1,2,\ldots,n'+2$ are regular, 
and the regular vertex matrix $R_E$ of $E$ is $A+I$. 
\end{itemize}
Then $E$ satisfies the four conditions in 
Proposition~\ref{realize-unital-algebras}. 
Finally, $v=1$ and $w=2$ satisfy (4). 
\end{proof}

A Cuntz-Krieger algebra ${\mathcal O}_A$ is 
isomorphic to the $C^*$-algebra of a finite graph 
with no sinks or sources. 
For such a graph $E$, every vertex is regular. 
Therefore, 
Proposition~\ref{K-group-computation-prop} 
and \eqref{rank-computation} 
shows that the $K$-groups $G = K_0({\mathcal O}_A)$ and $F = K_1({\mathcal O}_A)$ 
of a Cuntz-Krieger algebra ${\mathcal O}_A$ satisfy 
\begin{itemize}
\item $G$ is a finitely generated abelian group, 
\item $F$ is a finitely generated free abelian group 
with $\rank F = \rank G$.  
\end{itemize}
The following proposition shows that 
these are the only restrictions for the $K$-groups 
of simple Cuntz-Krieger algebras, 
and there is no restriction on the position of $[1_{{\mathcal O}_A}]_0$ 
in $K_0({\mathcal O}_A)$. 

\begin{proposition} \label{realize-CK-algebras}
Let $G$ be a finitely generated abelian group, and let $F$ be a finitely generated free abelian group with $\rank F = \rank G$.  
Let $g_0$ be an element of $G$. 

Then there exists a graph $E$ with finite vertex set $E^0$ and finite edge set $E^1$ 
that satisfies properties (1)--(4) of Proposition \ref{realize-unital-algebras-2}.
With $G$ on the form \eqref{groupform} we may choose $E$ with $|E^0|=1+k+n$.
\end{proposition}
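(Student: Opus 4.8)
The plan is to obtain this as an essentially immediate consequence of Proposition~\ref{realize-unital-algebras-2}, the point being that the hypothesis $\rank F = \rank G$ is exactly what forces the graph produced there to be finite. Write $G$ in the form \eqref{groupform} and put $n = \rank G$ and $n' = \rank F$, so that the rank equality reads $n' = n$.

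First I would feed the data $(G,F,g_0)$ into Proposition~\ref{realize-unital-algebras-2} to produce a graph $E$ with finite $E^0$ satisfying its conditions (1)--(4). The key observation is that in each of the three cases of that construction the non-regular vertices are precisely those whose index falls in the range reserved for infinite emitters---namely $k+n' < i \le k+n$ in the cases $g_0 \ne 0$ and $G$ non-free, and $n'+2 < i \le n+2$ in the case $g_0 = 0$ with $G$ free. Since $n' = n$, every one of these ranges is empty, and hence $E^0_\textnormal{reg} = E^0$; that is, $E$ has no infinite emitters.

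Next I would check that $E$ is genuinely a Cuntz--Krieger graph. As every vertex is regular, $R_E$ is a finite square matrix with entries in $\Z^+$, so $E^1$ is finite and $E$ has no sinks; and since each entry of $R_E$ is strictly positive by construction, $E$ has no sources. A finite graph with no sinks or sources has $C^*(E) \cong {\mathcal O}_{R_E}$ a Cuntz--Krieger algebra, and the simplicity and $K$-theoretic conclusions of Proposition~\ref{realize-unital-algebras-2} then give that it is simple with $(K_0(C^*(E)),[1_{C^*(E)}]_0) \cong (G,g_0)$ and $K_1(C^*(E)) \cong F$. Thus the very graph supplied by Proposition~\ref{realize-unital-algebras-2} already satisfies (1)--(4), and the vertex count $|E^0| = 1+k+n$ transfers verbatim, subject only to the single degenerate exception recorded in Remark~\ref{smallest-vertices-remark} (where $g_0 = 0$ and $G$ is free, so that $k=0$ and one extra vertex is unavoidable in order also to arrange condition~(4)).

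I do not anticipate a genuine obstacle, as the whole content is the recognition that the rank hypothesis keeps the earlier construction row-finite with finitely many edges. The only step meriting care is the bookkeeping of the index ranges in Proposition~\ref{realize-unital-algebras-2}: one must trace it through all three cases to confirm that no infinite emitter survives once $n' = n$. This is routine, but it is worth stating explicitly rather than leaving to the reader.
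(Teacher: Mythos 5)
Your proposal is correct and follows essentially the same route as the paper, whose entire proof is the observation that the graph built in Proposition~\ref{realize-unital-algebras} or Proposition~\ref{realize-unital-algebras-2} has finitely many edges precisely because $\rank F = \rank G$ empties the index range reserved for infinite emitters. Your more careful bookkeeping (checking all three cases of the earlier construction, and flagging the degenerate $g_0=0$, $G$ free case where one extra vertex is needed) only makes explicit what the paper leaves to the reader.
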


\begin{proof}
The graph $E$ constructed in the proof of 
Proposition~\ref{realize-unital-algebras} 
or Proposition~\ref{realize-unital-algebras-2}
has finitely many edges because the rank $n'$ of $F$ 
coincides with the rank $n$ of $G$. 
Hence this graph $E$ has the desired properties. 
\end{proof}

\section{A method for realizing six-term exact sequences} \label{comm-diagram-sec}

In Proposition~\ref{piece-together-prop} of this section we prove a result that will allow us to 
realize a given six-term exact sequence with four groups already given as kernels and cokernels of certain matrices $A$ and $B$ of a certain form determined by a block matrix $\left(\begin{smallmatrix}A&Y\\0&B
  \end{smallmatrix}\right)$, just as in (5) of Proposition~\ref{Ksix-from-snake-prop}. 
 We start out by noting that in fact every such sequence has this form, provided only that the relevant index map vanishes:

\begin{proposition} \label{Snake-prop}
Let us take $A \in M_{n_1, n_1'} (\Z)$ 
and $B \in M_{n_3, n_3'} (\Z)$ 
for some $n_1, n_1', n_3, n_3' \in \{0,1,2,\ldots,\infty\}$. 
Then 
$Y \mapsto \left(\begin{smallmatrix} A & Y \\ 0 & B \end{smallmatrix} \right)$
is a bijection from 
$M_{n_3', n_1} (\Z)$ to the set of all matrices 
$X \in M_{n_1+n_3, n_1'+n_3'} (\Z)$ and the diagram 
\[
\xymatrix{
0 \ar[r]& \Z^{n_1'} \ar[r]^-{I'} \ar[d]^-{A} 
& \Z^{n_1'}\oplus\Z^{n_3'} \ar[r]^-{P'} \ar@{.>}[d]^-{X} 
&\Z^{n_3'} \ar[d]^-{B} \ar[r] &0\\
0 \ar[r]& \Z^{n_1} \ar[r]_-{I} 
& \Z^{n_1}\oplus\Z^{n_3} \ar[r]_-{P} 
& \Z^{n_3}  \ar[r] &0
}
\]
commutes 
where $I,I'$ are the obvious inclusions $x \mapsto (x,0)$ 
and $P,P'$ are the obvious projections $(x,y) \mapsto y$. 

Moreover for each $Y \in M_{n_3', n_1} (\Z)$, 
the sequence 
\begin{equation} \label{snake-maps}
\xymatrix{ 
\coker A \ar[r]_-{I} 
& \coker {\left( \begin{smallmatrix} A & Y \\ 0 & B \end{smallmatrix} \right)} \ar[r]_-{P} & \coker B\phantom{.} \ar[r] & 0 \\
\ker B \ar[u]^{[Y]} &  \ker  {\left( \begin{smallmatrix} A & Y \\ 0 & B \end{smallmatrix} \right)} \ar[l]_-{P'} & \ker A \ar[l]_-{I'} & 0 \ar[l]
}
\end{equation}
is exact where we use the same notation $I,P,I',P'$ 
to denote the induced map on cokernels or the restricted maps on kernels, 
and where $[Y]$ is the composition of 
the restriction of $Y$ to $\ker B$ and 
the natural surjection $\Z^{n_1} \to \coker A$. 
\end{proposition}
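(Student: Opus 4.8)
The plan is to prove the two assertions of the proposition in turn. The first is purely a computation with $2\times 2$ block matrices together with the canonical split exact sequences attached to direct sums, while the second is a direct application of the Snake Lemma, the only non-formal ingredient being an explicit description of the connecting homomorphism. I would treat the bijection first.

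For the bijection, I would write an arbitrary $X \in M_{n_1+n_3,\,n_1'+n_3'}(\Z)$ in block form
\[
X = \begin{pmatrix} X_{11} & X_{12} \\ X_{21} & X_{22} \end{pmatrix}
\]
according to the decompositions $\Z^{n_1'}\oplus\Z^{n_3'}$ of the domain and $\Z^{n_1}\oplus\Z^{n_3}$ of the codomain. Commutativity of the left square, $X\circ I' = I\circ A$, evaluated on $(x,0)$ forces $X_{11}=A$ and $X_{21}=0$; commutativity of the right square, $P\circ X = B\circ P'$, then forces $X_{22}=B$ upon evaluating on $(0,y)$, while imposing no condition on the remaining block. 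Setting $Y:=X_{12}$, this shows that the matrices $X$ rendering the diagram commutative are exactly those of the form $\left(\begin{smallmatrix}A&Y\\0&B\end{smallmatrix}\right)$, and the assignment $Y\mapsto\left(\begin{smallmatrix}A&Y\\0&B\end{smallmatrix}\right)$ is a bijection onto this set; column-finiteness of $A$, $B$, $Y$ is equivalent to that of the block matrix, so everything stays inside the relevant matrix classes. Finally, both rows are the canonical split short exact sequences of the direct-sum decompositions and are therefore exact, which completes the first assertion.

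For the exact sequence, I would observe that the diagram just established has exact rows and vertical maps $A$, $X$, $B$, so the Snake Lemma (cf.\ \cite{sm:h}) applies and yields an exact sequence
\[
0 \to \ker A \xrightarrow{I'} \ker X \xrightarrow{P'} \ker B \xrightarrow{\partial} \coker A \xrightarrow{I} \coker X \xrightarrow{P} \coker B \to 0,
\]
where $I',P'$ are the restrictions to kernels and $I,P$ the maps induced on cokernels (each is well defined precisely because the corresponding square commutes). Reading this around the hexagon is exactly the sequence \eqref{snake-maps}, so all that remains is to check that the abstract connecting map $\partial$ equals the concrete map $[Y]$ of the statement.

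This identification of $\partial$ is the one step that requires actual (if brief) work, and is where I expect any difficulty to lie; I would carry it out by the standard diagram chase defining the connecting map. Given $\xi\in\ker B\subseteq\Z^{n_3'}$, I choose a $P'$-preimage, namely $(0,\xi)\in\Z^{n_1'}\oplus\Z^{n_3'}$; applying $X$ gives $X(0,\xi)=(Y\xi,B\xi)=(Y\xi,0)$ since $B\xi=0$; and this lies in $\im I$, coming from $Y\xi\in\Z^{n_1}$. By definition $\partial(\xi)$ is the class of $Y\xi$ in $\coker A$, which is precisely the composite of the restriction of $Y$ to $\ker B$ with the quotient map $\Z^{n_1}\to\coker A$, i.e.\ $\partial=[Y]$. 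Beyond this, the only thing to watch is bookkeeping --- keeping the primed and unprimed index sets straight and confirming that column-finiteness is preserved throughout --- rather than anything conceptually substantial.
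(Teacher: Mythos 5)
Your proof is correct and takes essentially the same route as the paper, whose entire proof reads that the first assertion ``is easy to see'' and the second ``follows from the Snake Lemma (see \cite{sm:h})''; your block-matrix computation and the diagram chase identifying the connecting homomorphism with $[Y]$ are precisely the details the paper leaves implicit.
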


\begin{proof}
The former assertion is easy to see, 
and the latter follows from the Snake Lemma (see \cite{sm:h}). 
\end{proof}

\begin{lemma} \label{extension-lem}
Let $n,n' \in \{0,1,2,\ldots,\infty\}$ and $A \in M_{n,n'}(\Z)$. 
Let $G$ be an abelian group. 
Then any homomorphism $\eta\colon \ker A \to G$ 
extends to $\zeta \colon \Z^{n'} \to G$ 
such that $\zeta|_{\ker A}=\eta$. 
\end{lemma}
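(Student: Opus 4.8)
The plan is to use the projectivity of free abelian groups to split off $\ker A$ as a direct summand of $\Z^{n'}$. Recalling that $A \in M_{n,n'}(\Z)$ is identified with the homomorphism $A\colon \Z^{n'} \to \Z^n$, I would first consider the short exact sequence
$$
0 \longrightarrow \ker A \longrightarrow \Z^{n'} \stackrel{A}{\longrightarrow} \im A \longrightarrow 0,
$$
in which the surjection is $A$ regarded as a map onto its image and the injection is the inclusion of $\ker A$ into $\Z^{n'}$.

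The key observation is that $\im A$ is a subgroup of the free abelian group $\Z^n$, and is therefore itself free abelian, since every subgroup of a free abelian group is free (this holds in arbitrary rank, and was already invoked after Proposition~\ref{K-group-computation-prop}). Being free, $\im A$ is a projective $\Z$-module, so the displayed short exact sequence splits; the splitting provides a retraction $\rho\colon \Z^{n'} \to \ker A$ with $\rho(x) = x$ for every $x \in \ker A$.

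I would then define $\zeta := \eta \circ \rho \colon \Z^{n'} \to G$. For $x \in \ker A$ this gives $\zeta(x) = \eta(\rho(x)) = \eta(x)$, so $\zeta|_{\ker A} = \eta$, which is exactly the desired extension. The argument is entirely formal, so I do not anticipate a genuine obstacle; the only point deserving attention is the possibility that $n$ or $n'$ equals $\infty$, but since subgroups of free abelian groups remain free in infinite rank, the splitting step --- and hence the whole proof --- goes through unchanged.
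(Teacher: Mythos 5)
Your proof is correct and takes essentially the same approach as the paper: the paper also uses freeness of $\im A$ (as a subgroup of $\Z^n$) to split the sequence, constructing a section $S\colon \im A \to \Z^{n'}$ with $A\circ S = \mathrm{id}$ and then using $I - SA$ as the explicit retraction onto $\ker A$ — which is exactly the retraction $\rho$ your splitting provides. Defining $\zeta = \eta\circ\rho$ is then identical to the paper's $\zeta = \eta\circ(I-SA)$.
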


\begin{proof}
Since $\im A$ is a subgroup of the free group $\Z^n$, 
$\im A$ is free. 
Therefore, there exists a homomorphism 
$S \colon \im A \to \Z^{n'}$ such that
\begin{equation} \label{S-def}
A \circ S (x) =  x \quad \text{ for all $x \in \im A$.}
\end{equation}
Let $I\colon \Z^{n'} \to \Z^{n'}$ denote the identity map on $\Z^{n'}$. 
Since $I - SA$ takes values in $\ker A$, 
we can define $\zeta \colon \Z^{n'} \to G$ 
by $\zeta = \eta \circ (I - SA)$. 
It is easy to verify $\zeta|_{\ker A}=\eta$. 
\end{proof}

\begin{proposition} \label{piece-together-prop}
Let $\mathcal{E}$  denote the following exact sequence of abelian groups
\begin{equation} \label{given-exact-seq}
\xymatrix{G_1 \ar[r]^-{\epsilon} & G_2 \ar[r]^-{\gamma} & G_3 \ar[d]^-{0} \\
F_3 \ar[u]^-{\delta} & F_2 \ar[l]_-{\gamma'} & F_1 \ar[l]_-{\epsilon'} 
}
\end{equation}
with $F_1$, $F_2$, and $F_3$ free.  
Suppose that there exist column-finite matrices 
$A \in M_{n_1, n_1'} (\Z)$ 
and $B \in M_{n_3, n_3'} (\Z)$
for some $n_1, n_1', n_3, n_3' \in \{0,1,2,\ldots,\infty\}$ 
with isomorphisms 
\begin{align*}
\alpha_1 &\colon \coker A \to G_1, &
\beta_1 &\colon \ker A \to F_1, \\ \nonumber
\alpha_3 &\colon \coker B \to G_3, &
\beta_3 &\colon \ker B \to F_3. 
\end{align*}

Then there exist a column-finite matrix $Y \in M_{n_1, n_3'} (\Z)$
and isomorphisms 
\begin{align*} 
\alpha_2 \colon \coker \left(\begin{smallmatrix} A & Y \\ 0 & B \end{smallmatrix} \right) \to G_2, &
& \beta_2 &\colon \ker \left(\begin{smallmatrix} A & Y \\ 0 & B \end{smallmatrix} \right) \to F_2
\end{align*}
such that $\alpha_i$ and $\beta_i$ for $i=1,2,3$ 
give an isomorphism (see \eqref{isodef})  from the exact sequence
\begin{equation} \label{snake-maps-again}
\xymatrix{ 
\coker A \ar[r]_-{I} 
& \coker {\left( \begin{smallmatrix} A & Y \\ 0 & B \end{smallmatrix} \right)} \ar[r]_-{P} & \coker B\phantom{.} \ar[d]^0 \\
\ker B \ar[u]^{[Y]} &  \ker  {\left( \begin{smallmatrix} A & Y \\ 0 & B \end{smallmatrix} \right)} \ar[l]_-{P'} & \ker A. \ar[l]_-{I'}
}
\end{equation}
to $\mathcal{E}$, where $I,I'$ and $P,P'$ 
are induced by the obvious inclusions or projections. 
\end{proposition}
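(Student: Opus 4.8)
The plan is to use Proposition~\ref{Snake-prop} as the backbone: for \emph{any} choice of $Y \in M_{n_1,n_3'}(\Z)$ the lower sequence \eqref{snake-maps-again} is automatically exact and has exactly the block form we want, with the correct outer groups $\coker A \cong G_1$, $\ker A \cong F_1$, $\coker B \cong G_3$, $\ker B \cong F_3$ via the given $\alpha_1,\beta_1,\alpha_3,\beta_3$. So the entire problem reduces to choosing $Y$ so that the connecting map $[Y]\colon \ker B \to \coker A$ of \eqref{snake-maps-again} matches, under these identifications, the prescribed map $\delta\colon F_3 \to G_1$ of $\mathcal{E}$; once the two exact sequences agree on the four outer groups and on the one nontrivial connecting map, the two middle groups $G_2$ and $\coker\left(\begin{smallmatrix}A&Y\\0&B\end{smallmatrix}\right)$ (resp.\ $F_2$ and $\ker\left(\begin{smallmatrix}A&Y\\0&B\end{smallmatrix}\right)$) are forced to be isomorphic by a standard five-lemma-type diagram chase, producing $\alpha_2,\beta_2$ that complete the isomorphism of six-term sequences.

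The crux, then, is the \emph{construction of $Y$}. Transporting $\delta$ through the given isomorphisms yields a homomorphism $\tilde\delta := \alpha_1^{-1}\circ \delta \circ \beta_3 \colon \ker B \to \coker A$. I want $Y$ with $[Y] = \tilde\delta$, where by definition $[Y]$ is the restriction of $Y$ to $\ker B$ followed by the quotient map $q\colon \Z^{n_1}\to \coker A$. First I would lift $\tilde\delta$ along $q$: since $\ker B$ is free (it is a subgroup of $\Z^{n_3}$, and the hypothesis records $F_3$ free), the map $\tilde\delta\colon \ker B \to \coker A$ lifts to some $\eta\colon \ker B \to \Z^{n_1}$ with $q\circ\eta = \tilde\delta$, simply by choosing preimages on a basis. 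Now $\eta$ is defined only on $\ker B \subseteq \Z^{n_3'}$, but $Y$ must be a homomorphism on all of $\Z^{n_3'}$; this is precisely the extension problem solved by Lemma~\ref{extension-lem}, applied with the matrix $B$, the group $\Z^{n_1}$, and the homomorphism $\eta$. It furnishes $Y\colon \Z^{n_3'}\to \Z^{n_1}$ with $Y|_{\ker B} = \eta$, whence $[Y] = q\circ Y|_{\ker B} = q\circ\eta = \tilde\delta$, as desired. (Column-finiteness of the resulting $Y$ holds because its values lie in the free module $\Z^{n_1}$ presented as a coker-lift; one checks the construction in Lemma~\ref{extension-lem} produces column-finite data.)

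With $Y$ in hand, the diagram
\begin{equation*}
\xymatrix{
\coker A \ar[r]^-{I}\ar[d]_-{\alpha_1}
& \coker {\left( \begin{smallmatrix} A & Y \\ 0 & B \end{smallmatrix} \right)} \ar[r]^-{P} & \coker B \ar[d]^-{\alpha_3} \\
G_1 \ar[r]_-{\epsilon} & G_2 \ar[r]_-{\gamma} & G_3
}
\end{equation*}
has exact rows, and similarly for the $\ker$-row mapping to the $F_i$; the connecting maps agree by the construction of $Y$. The last step is to define $\alpha_2$ and $\beta_2$. The standard method is a diagram chase: given $x \in \coker\left(\begin{smallmatrix}A&Y\\0&B\end{smallmatrix}\right)$, push it to $\coker B$, pull back via $\alpha_3$ and $\gamma$, correct by an element coming from $G_1$ through $\epsilon$ and $\alpha_1\circ I^{-1}$, and verify well-definedness using exactness at $G_2$ and the agreement of the connecting maps; the same recipe yields $\beta_2$, and the five lemma guarantees both are isomorphisms. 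I expect the genuine obstacle to be neither the exactness nor the five-lemma bookkeeping—those are routine once the outer data match—but rather ensuring the \emph{compatibility of all the maps in the two-by-three ladder simultaneously}, i.e.\ that the single chosen $Y$ makes the connecting square commute in the one place it must while the remaining squares commute automatically from Proposition~\ref{Snake-prop}; pinning down that $[Y]=\tilde\delta$ is exactly the right and only condition to impose is the conceptual heart of the argument, and the freeness of $\ker B$ (used twice: for the lift $\eta$ and inside Lemma~\ref{extension-lem}) is what makes it go through.
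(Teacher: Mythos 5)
Your construction of $Y$ with $[Y]=\alpha_1^{-1}\circ\delta\circ\beta_3$ is fine (and uses Lemma~\ref{extension-lem} just as the paper does), but the central claim of your first paragraph --- that once the four outer groups and the connecting map agree, the two middle groups are ``forced to be isomorphic by a standard five-lemma-type diagram chase'' --- is false, and this is exactly where the real content of the proposition lies. The five lemma only shows that an \emph{already given} map between the middle terms of a commuting ladder is an isomorphism; it cannot produce such a map. What the outer data determine is that both $G_2$ and $\coker\left(\begin{smallmatrix}A&Y\\0&B\end{smallmatrix}\right)$ are extensions of $G_3\cong\coker B$ by $G_1/\im\delta\cong\coker A/\im[Y]$; the extension class, however, is genuinely extra information, and it is encoded in the entries of $Y$ \emph{off} $\ker B$, which your condition $[Y]=\tilde\delta$ does not constrain at all. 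Concretely, take $n_1=n_1'=n_3=n_3'=1$ and $A=B=(2)$, so $\ker A=\ker B=0$ and $\coker A=\coker B=\Z/2\Z$, and let $\mathcal{E}$ be the sequence with $G_1=G_3=\Z/2\Z$, $G_2=\Z/4\Z$, $F_1=F_2=F_3=0$, $\epsilon$ multiplication by $2$, $\gamma$ the quotient map, and $\delta=0$; all hypotheses of the proposition hold. Since $\ker B=0$, your condition on $Y$ is vacuous, so $Y=0$ is a legal choice in your scheme; but $\coker\left(\begin{smallmatrix}2&0\\0&2\end{smallmatrix}\right)\cong\Z/2\Z\oplus\Z/2\Z$, which is not isomorphic to $G_2=\Z/4\Z$, so no $\alpha_2$ whatsoever exists for this $Y$. (The proposition is still true here: $Y=(1)$ gives cokernel $\Z/4\Z$.) Your diagram-chase recipe for $\alpha_2$ breaks at precisely this point: the ``correcting'' element is ambiguous by $\im\epsilon$, and well-definedness of the resulting map is equivalent to agreement of the two extension classes, which nothing in your construction arranges. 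Note the asymmetry: on the kernel side your argument does go through, because $0\to F_1\to F_2\to\ker\delta\to 0$ and its snake counterpart both split ($\ker\delta$ is a subgroup of the free group $F_3$, hence free), so $\beta_2$ exists; the obstruction lives entirely on the cokernel side.

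The paper avoids this trap by reversing the order of construction. It first builds a surjection $\pi_2\colon\Z^{n_1}\oplus\Z^{n_3}\to G_2$ by lifting $\pi_3$ through $\gamma$ on the free group $\Z^{n_3}$, and only then chooses $Y=Y_1-Y_2\circ B$, where $Y_1$ realizes $\delta$ (essentially your step, applied to an extension $\beta_3'$ of $\beta_3$ to all of $\Z^{n_3'}$) and the correction term $Y_2\circ B$ --- which vanishes on $\ker B$ and hence does not disturb $[Y]$ --- forces $\pi_2\circ\left(\begin{smallmatrix}Y\\B\end{smallmatrix}\right)=0$, so that $\pi_2$ descends to the required $\alpha_2$ on the cokernel. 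Only after this ladder exists and commutes is the five lemma invoked, as the final step, to conclude that $\alpha_2$ and $\beta_2$ are isomorphisms.
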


\begin{proof}
By a simple calculation or applying the Snake Lemma, 
we see that the sequence \eqref{snake-maps} is exact 
(see Remark~\ref{Snake-remark}). 

We define $\pi_1 \colon \Z^{n_1} \to G_1$ 
(resp.\ $\pi_3 \colon \Z^{n_3} \to G_3$) 
to be the composition of the natural surjection to 
the cokernel and the isomorphism $\alpha_1$ 
(resp.\ $\alpha_3$). 
We first construct a homomorphism 
$\pi_2 \colon \Z^{n_1} \oplus \Z^{n_3} \to G_2$ 
with 
\begin{equation}\label{cond-for-pi2}
\xymatrix{
0 \ar[r] & \Z^{n_1} \ar[r]_-{I} \ar[d]^-{\pi_1} 
& \Z^{n_1} \oplus \Z^{n_3} \ar[r]_-{P} \ar@{.>}[d]^-{\pi_2} 
& \Z^{n_3}  \ar[d]^-{\pi_3} \ar[r] & 0\\
\cdots \ar[r]^-{\delta} 
& G_1 \ar[r]^-{\epsilon} & G_2 \ar[r]^-{\gamma} & G_3 \ar[r] & 0
}
\end{equation}
commuting, where
\begin{align*}
I&\colon \Z^{n_1} \ni x \mapsto (x,0) \in \Z^{n_1} \oplus \Z^{n_3}\\ 
P&\colon \Z^{n_1} \oplus \Z^{n_3} \ni (x,y) \mapsto y \in \Z^{n_3}
\end{align*}
are the obvious inclusions, and projections. 

Since $\Z^{n_3}$ is a free abelian group and $\gamma\colon G_2 \to G_3$ 
is surjective, 
there exists a homomorphism $\mu \colon \Z^{n_3} \to G_2$ such that 
\begin{align*} 
\gamma \circ \mu = \pi_3 \qquad\qquad 
\raisebox{0.7cm}{\xymatrix{ & \Z^{n_3} \ar@{.>}[dl]_-{\mu} \ar[d]^-{\pi_3} \\
G_2 \ar@{>>}[r]_-{\gamma} & G_3.
} }
\end{align*}
We define 
$\pi_2 \colon \Z^{n_1} \oplus \Z^{n_3} \to G_2$ 
by 
$$\pi_2 (x,y) := \epsilon(\pi_1(x)) + \mu(y).$$
for $x \in \Z^{n_1}$ and $y \in \Z^{n_3}$. 
Commutativity in \eqref{cond-for-pi2} can be easily verified as 
\begin{align*}
\pi_2(I(x))&=\pi_2(x,0)=\epsilon(\pi_1(x))\\
\gamma (\pi_2(x,y)) &= \gamma \big(\epsilon(\pi_1(x)) + \mu(y)\big)
= 0 + \pi_3(y) = \pi_3 (P (x,y)). 
\end{align*}

Next we construct a homomorphism $Y \colon \Z^{n_3'} \to \Z^{n_1}$ 
such that 
\begin{align} \label{cond-for-Y1}
\pi_2 \circ \left(\begin{smallmatrix} Y \\ B \end{smallmatrix} \right) 
=0 \qquad\qquad 
\xymatrix{ \Z^{n_3'} \ar@{.>}[r]^-{\left({\begin{smallmatrix} Y \\ B \end{smallmatrix}} \right)} 
& \Z^{n_1} \oplus \Z^{n_3} \ar[r]^-{\pi_2} & G_2
}
\end{align}
and 
\begin{align} \label{cond-for-Y2}
\pi_1 \circ Y|_{\ker B} = \delta \circ \beta_3 \qquad\qquad 
\raisebox{0.7cm}{
\xymatrix{ \ker B \ar@{.>}[r]^{Y|_{\ker B}} \ar[d]_{\beta_3} 
& \Z^{n_1}\phantom{.} \ar[d]_{\pi_1} \\
F_3  \ar[r]^\delta & G_1.
} }
\end{align}
By Lemma~\ref{extension-lem}, 
there exists an extension 
$\beta_3' \colon \Z^{n_3'} \to F_3$
of the isomorphism $\beta_3 \colon \ker B \to F_3$. 
Since $\pi_1\colon \Z^{n_1} \to G_1$ is surjective 
and $\Z^{n_3'}$ is a free abelian group, 
there exists a homomorphism $Y_1 \colon \Z^{n_3'} \to \Z^{n_1}$ such that
\begin{equation} \label{Y1-def}
\pi_1 \circ Y_1 = \delta \circ \beta_3' \qquad\qquad 
\raisebox{0.7cm}{
\xymatrix{ 
\Z^{n_3'} \ar@{.>}[r]^-{Y_1} \ar[d]_-{\beta_3'} 
& \Z^{n_1}\phantom{.} \ar[d]_-{\pi_1} \\
F_3  \ar[r]^-\delta & G_1.
} }
\end{equation}
Since $\mu |_{\im B}$ takes values in $\ker \gamma = \im \epsilon$, 
we see that $\mu|_{\im B}\colon \im B \to \im \epsilon$.  
Also, $\im B$ is a subgroup of a free abelian group 
and thus $\im B$ is free abelian.  
Because $\epsilon \circ \pi_1 : \Z^{n_1} \to \im \epsilon$ is surjective, 
there exists a homomorphism $Y_2\colon \im B \to \Z^{n_1}$ such that
\begin{equation*} 
\epsilon \circ \pi_1 \circ Y_2 = \mu|_{\im B} \qquad\qquad 
\raisebox{0.7cm}{
\xymatrix{ 
\im B \ar@{.>}[d]^{Y_2} \ar[r]_{\mu|_{\im B}} 
& G_2\phantom{.} \\
\Z^{n_1}  \ar[r]^{\pi_1} & G_1. \ar[u]_{\epsilon}
} }
\end{equation*}
Define a homomorphism $Y \colon \Z^{n_3'} \to \Z^{n_1}$ by
\begin{equation*} 
Y := Y_1 - Y_2 \circ B,
\end{equation*}
and, in line with our convention, 
we will use the same symbol for the matrix $Y \in M_{n_1, n_3'} (\Z)$ 
that implements this homomorphism.
For $y' \in \Z^{n_3'}$ we have 
\begin{align*}
\pi_2(Y(y'),B(y')) &= \epsilon(\pi_1(Y(y'))) + \mu(B(y')) \\
&= \epsilon \big(\pi_1\big( Y_1(y') - Y_2(B(y') )\big) \big) + \mu(B(y')) \\
&= 0 - \mu(B(y')) + \mu(B(y')) \\
&= 0. 
\end{align*}
This shows \eqref{cond-for-Y1}. 
The equality \eqref{cond-for-Y2} follows from \eqref{Y1-def} 
because 
\[
Y|_{\ker B} = Y_1|_{\ker B} - (Y_2 \circ B)|_{\ker B}
=Y_1|_{\ker B},
\]
and hence
we have 
\begin{align*}
\pi_2 \circ \left(\begin{smallmatrix} A  \\ 0 \end{smallmatrix} \right) 
= \pi_2 \circ I \circ A = \epsilon \circ \pi_1 \circ A = 0. 
\end{align*}
This and \eqref{cond-for-Y1} show 
\[
\pi_2 \circ \left(\begin{smallmatrix} A & Y \\ 0 & B \end{smallmatrix} \right) 
=0. 
\]
Hence the map $\pi_2 \colon \Z^{n_1} \oplus \Z^{n_3} \to G_2$ 
factors through a map 
\[
\coker \left(\begin{smallmatrix} A & Y \\ 0 & B \end{smallmatrix} \right) 
\to G_2
\]
which is denoted by $\alpha_2$. 

Now we shall construct 
a homomorphism 
\[
\beta_2 \colon 
\ker \left(\begin{smallmatrix} A & Y \\ 0 & B \end{smallmatrix} \right) 
\to F_2
\] 
fitting into
\begin{equation}\label{cond-for-beta2}
\xymatrix{
0 \ar[r] & \ker A \ar[d]_-{\beta_1} \ar[r]_-{I'}
& \ker \left({\begin{smallmatrix} A & Y \\ 0 & B \end{smallmatrix}} \right) 
 \ar[r]_-{P'} \ar@{.>}[d]_-{\beta_2} 
& \ker B \ar[d]_-{\beta_3} & \\
0 \ar[r] & F_1\ar[r]^-{\epsilon'} & F_2 \ar[r]^-{\gamma'} & 
F_3 \ar[r]^-{\delta} & G_1
}
\end{equation}
where $I'$ and $P'$ are the restrictions of 
obvious inclusion, and projections, as above.

For $(x',y') \in \ker \left(\begin{smallmatrix} A & Y \\ 0 & B \end{smallmatrix} \right)$, 
 we have 
\begin{align*}
\delta\big( \beta_3 (P'(x',y'))\big) 
& = \pi_1 (Y (y'))
= \pi_1 ( - A (x'))=0. 
\end{align*}
Hence the image of $\beta_3 \circ P'$ is contained 
in $\ker \delta = \im \gamma'$. 
The abelian group 
$\ker \left({\begin{smallmatrix} A & Y \\ 0 & B \end{smallmatrix}} \right)$ 
is free because it is a subgroup of 
the free group $\Z^{n_1'} \oplus \Z^{n_3'}$.  
Therefore there exists a homomorphism 
$\nu \colon \ker \left({\begin{smallmatrix} A & Y \\ 0 & B \end{smallmatrix}} \right) \to F_2$ such that
\begin{align*} 
\gamma' \circ \nu = \beta_3 \circ P' \qquad\qquad 
\raisebox{0.7cm}{ 
\xymatrix{ 
\ker \left({\begin{smallmatrix} A & Y \\ 0 & B \end{smallmatrix}} \right) 
\ar@{.>}[d]_-{\nu} \ar[r]_-{P'} 
& \ker B \ar[d]^-{\beta_3} \\
F_2 \ar[r]_-{\gamma'} & F_3.
} }
\end{align*}
Since 
\[
\gamma' \circ \nu \circ I' = \beta_3 \circ P' \circ I' =0, 
\]
the image of $\nu \circ I'$ is contained in 
$\ker \gamma' = \im \epsilon'$. 
Since $\epsilon'$ is injective, 
there exists a homomorphism $\eta \colon \ker A \to F_1$ 
such that $\epsilon' \circ \eta = \nu \circ I'$. 
By Lemma~\ref{extension-lem}, 
there exists an extension 
$\zeta \colon \Z^{n_1'} \to F_1$
of the homomorphism $\beta_1-\eta \colon \ker A \to F_1$. 
We define 
\[
\beta_2 \colon 
\ker \left(\begin{smallmatrix} A & Y \\ 0 & B \end{smallmatrix} \right) 
\ni (x',y') \mapsto \nu(x',y') + 
\epsilon'(\zeta(x')) \in F_2
\] 
Commutativity `n \eqref{cond-for-beta2} can be verified as 
\begin{align*}
\beta_2(I'(x))
&=\nu(x,0) + \epsilon'(\zeta(x))
=\nu (I' (x)) + \epsilon'(\beta_1(x)-\eta(x))
=\epsilon'(\beta_1(x)) \\
\\
\gamma' (\beta_2(x',y')) 
&= \gamma' ( \nu(x',y') + \epsilon'(\zeta(x'))
= \beta_3(y') + 0 = \beta_3 (P' (x',y')). 
\end{align*}
for $x \in \ker A$ and 
$(x',y') \in \ker \left(\begin{smallmatrix} A & Y \\ 0 & B \end{smallmatrix} \right)$. 

Thus we constructed $Y \in M_{n_1, n_3'} (\Z)$ 
and two homomorphisms $\alpha_2$ and $\beta_2$. 
The diagram 
\begin{align*}
\xymatrix{ \coker A \ar[rr]^-{\iota_*} \ar[rd]_-{\alpha_1} & & 
\coker \left({\begin{smallmatrix} A & Y \\ 0 & B \end{smallmatrix}} \right) \ar[rr]^-{\pi_*} \ar[d]_-{\alpha_2} & & 
\coker B \ar[ddd]^-{0} \ar[dl]^-{\alpha_3} \\ 
& {G_1}\ar[r]^-{\epsilon} & {G_2}\ar[r]^-{\gamma} 
& {G_3}\ar[d]^-{0} & \\ 
& {F_3}\ar[u]^-{\delta} & {F_2} \ar[l]_-{\gamma'} 
& {F_1}\ar[l]_-{\epsilon'} & \\ 
\ker B \ar[uuu]^-{[Y]} \ar[ru]^-{\beta_3} & & 
\ker \left({\begin{smallmatrix} A & Y \\ 0 & B \end{smallmatrix}} \right) 
\ar[ll]_-{\pi_*} \ar[u]^-{\beta_2} & &
\ker A \ar[ll]_-{\iota_*} \ar[ul]_-{\beta_1}}
\end{align*}
commutes 
by \eqref{cond-for-pi2} for the top two squares, 
by \eqref{cond-for-Y2} for the left square, 
by \eqref{cond-for-beta2} for the bottom two squares, 
and trivially for the right square. 
Finally, 
the Five Lemma shows that 
$\alpha_2$ and $\beta_2$ are isomorphisms.
\end{proof}

\begin{remark}\label{Snake-remark}
The content of Proposition~\ref{piece-together-prop} 
can be summarized in the following commutative diagram 
with exact rows and columns: 

\begin{equation} \label{phi-diagram-to-verify}
\xymatrix{
& 0\ar[d] & 0\ar@{.>}[d] & 0\ar[d] &\\
0 \ar[r] & F_1 \ar[r]^-{\epsilon'} \ar[d]_-{\beta_1^{-1}} 
& F_2 \ar[r]^-{\gamma'} \ar@{.>}[d]_-{\beta_2^{-1}} 
& F_3 \ar[d]_{\beta_3^{-1}} \ar @{-}@/^2pc/[d]^-{\delta} |(.77){\cdot} |(.8){\cdot} |(.83){\cdot} &\\
0 \ar[r]& \Z^{n_1'} \ar[r]^-{I'} \ar[d]^-{A} 
& \Z^{n_1'}\oplus\Z^{n_3'} \ar[r]^-{P'} \ar@{.>}[d]^-{\left({\begin{smallmatrix} A & Y \\ 0 & B \end{smallmatrix}} \right)} 
&\Z^{n_3'} \ar[d]^-{B} \ar[r] \ar@{.>}[lld]_(.7){Y} &0\\
0 \ar[r]& \Z^{n_1} \ar@{->}@/_2pc/[d]_-{\delta} |(.17){\cdot} |(.2){\cdot} |(.23){\cdot}\ar[r]_-{I} \ar[d]^-{\pi_1} 
& \Z^{n_1}\oplus\Z^{n_3} \ar[r]_-{P} \ar@{.>}[d]^-{\pi_2} 
& \Z^{n_3}  \ar[d]^-{\pi_3} \ar[r] &0\\
& G_1\ar[r]_-{\epsilon} \ar[d] & G_2 \ar[r]_-{\gamma} \ar@{.>}[d] & G_3 \ar[r] \ar[d] & 0\\
&0&0&0&
}
\end{equation}
The solid lines and $\delta \colon F_3 \to G_1$ are 
the given data, 
and the dotted lines are the one we need to construct. 
We also need to show that $\delta$ factors through 
$Y \colon \Z^{n_3'} \to \Z^{n_1}$, 
the middle column is exact and the six squares commute. 
Then the snake lemma shows that the sequence \eqref{snake-maps}
is isomorphic to the given one $\mathcal{E}$. 
\end{remark}

In order to use the matrix $Y$ found above to define a graph, we will need it to be non-negative. The two ensuing lemmas are the key to arranging this.

\begin{lemma}\label{aver}
Let $n, n' \in \{0,1,2,\ldots,\infty\}$. 
For $A \in M_{n, n'} (\Z)$, 
the following three conditions are equivalent:
\begin{enumerate}
\item For every $i \in \{1,2,\ldots, n\}$ 
there exists $\xi_i \in \Z^{n'}$ 
such that $A\xi_i - \delta_{i} \in (\Z^+)^{n}$. 
\item There exists $A' \in M_{n', n} (\Z)$ 
such that $AA'-I \in M_{n, n} (\Z^+)$. 
\item For every $m \in \{0,1,2,\ldots,\infty\}$ 
and every $Y \in M_{n, m} (\Z)$, 
there exists $Q \in M_{n', m} (\Z)$ such that 
$AQ + Y \in M_{n, m} (\Z^+)$. 
\end{enumerate}
\end{lemma}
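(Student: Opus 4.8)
The plan is to prove the three conditions equivalent by showing $(1) \Leftrightarrow (2)$ directly and then establishing the cycle $(2) \Rightarrow (3) \Rightarrow (1)$, since $(3)$ is visibly the strongest-looking statement and $(1)$ the most elementary. The conceptual content is that condition $(2)$ says $A$ has a right inverse ``up to a non-negative error,'' and this is exactly the leverage needed to push an arbitrary integer matrix $Y$ into the non-negative cone by subtracting off a correction in the image of $A$.

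First I would handle $(1) \Leftrightarrow (2)$, which is essentially a reformulation. Condition $(2)$ asserts the existence of a single matrix $A'$ with $AA' - I \in M_{n,n}(\Z^+)$; writing $A'$ in terms of its columns, the $i$-th column $\xi_i := A'\delta_i \in \Z^{n'}$ satisfies $A\xi_i - \delta_i \in (\Z^+)^n$, which is precisely condition $(1)$. Conversely, given the vectors $\xi_i$ from $(1)$, I assemble them as the columns of a matrix $A'$; the only point requiring a moment's care is column-finiteness of $A'$, but each $\xi_i$ lies in $\Z^{n'}$ (which already encodes the finite-support/column-finite condition built into the $M_{n',n}(\Z)$ framework from Section~\ref{prelim-sec}), so $A' \in M_{n',n}(\Z)$ as required.

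Next, for $(2) \Rightarrow (3)$, I take the matrix $A'$ from $(2)$ and, given any $Y \in M_{n,m}(\Z)$, I set $Q := -A'Y$. The defining inequality unwinds as $AQ + Y = -AA'Y + Y = (I - AA')Y = -(AA'-I)Y$, so I need $-(AA'-I)Y \in M_{n,m}(\Z^+)$. This is where the sign bookkeeping must be done honestly rather than waved away: the product $(AA'-I)Y$ need not be non-negative for an arbitrary $Y$, so a naive single choice of $Q$ will not work and the argument must instead build $Q$ column-by-column, using that for each basis vector I can absorb both the positive and negative parts of $Y$'s entries by adding a sufficiently large multiple of the correction terms $\xi_i$. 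Concretely, I expect to write each column of $Y$ as a finite integer combination of the $\delta_i$ and, for each coefficient, add the matching multiple of $\xi_i$ (guaranteed by $(1)$) so that the accumulated $A$-image dominates the column of $Y$ entrywise; column-finiteness of $Y$ ensures this is a finite operation per column, keeping $Q$ in $M_{n',m}(\Z)$. I anticipate this step — reconciling the arbitrary signs in $Y$ with the one-sided non-negativity furnished by $(1)$/$(2)$ — to be the main obstacle, and the cleanest route is probably to absorb $Y$ one column at a time so that the error matrix one is left with is a sum of non-negative pieces.

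Finally $(3) \Rightarrow (1)$ is immediate by specialization: taking $m = n$ and $Y = -I$ in condition $(3)$ produces $Q \in M_{n',n}(\Z)$ with $AQ - I \in M_{n,n}(\Z^+)$, whose $i$-th column gives $\xi_i := Q\delta_i$ satisfying $A\xi_i - \delta_i \in (\Z^+)^n$, which is exactly $(1)$. This closes the cycle and completes the proof. The only genuine work lies in the $(2)\Rightarrow(3)$ passage; the remaining implications are matrix/column translations of one another.
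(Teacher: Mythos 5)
Your proposal is correct and follows essentially the same route as the paper: your column-by-column fix for $(2)\Rightarrow(3)$, adding $|Y(i,j)|$ copies of $\xi_i$ for each entry of a column, is exactly the paper's compact choice $Q := A'|Y|$ (entrywise absolute value), which gives $AQ+Y = (AA'-I)|Y| + (|Y|+Y) \in M_{n,m}(\Z^+)$. The only cosmetic differences are that the paper closes the cycle via $(3)\Rightarrow(1)$ with $m=1$ and the single column $Y=-\delta_i$ rather than your $m=n$, $Y=-I$ (which in fact recovers $(2)$ directly), and that the paper never entertains the discarded first attempt $Q=-A'Y$.
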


\begin{proof}
(1)$\Rightarrow$(2): 
By (1) for each $i \in \{1,2,\ldots, n\}$ 
there exists $\xi_i \in \Z^{n'}$ satisfying 
$A\xi_i - \delta_{i} \in (\Z^+)^{n}$. 
Then $A' := (\xi_1 \xi_2 \cdots \xi_n) \in M_{n', n} (\Z)$ defined by 
$A'(\delta_i) =\xi_i$ for all $i$ satisfies $AA'-I \in M_{n, n} (\Z^+)$. 

(2)$\Rightarrow$(3): 
Take $m$ and $Y \in M_{n, m} (\Z)$. 
We set $|Y| \in M_{n, m} (\Z)$ to be 
the entry-wise absolute value of $Y$; i.e., $|Y| (i,j) := |Y(i,j)|$.  
Let $A'\in M_{n', n} (\Z)$ be as in (2), 
and set $Q := A'|Y| \in M_{n', m} (\Z)$. 
Then we have 
\[
AQ + Y 
= (AA' - I)|Y| + (|Y| + Y) \in M_{n, m} (\Z^+). 
\]

(3)$\Rightarrow$(1): 
Take arbitrary $i \in \{1,2,\ldots, n\}$, 
and apply (3) to $Y \in M_{n, 1} (\Z)$ 
given by $Y(\delta_1)=-\delta_i$ 
to get $Q \in M_{n', 1} (\Z)$ with $AQ + Y \in M_{n, 1} (\Z^+)$. 
If we set $\xi := Q(\delta_1) \in \Z^{n'}$ then 
we have 
\[
A\xi - \delta_{i} = (AQ + Y) (\delta_1) \in (\Z^+)^{n}. 
\qedhere
\]
\end{proof}

The following lemma is analogous to the previous one, and 
is as easy to prove when $n'$ is finite, 
but substantially more complicated when $n'$ is infinite. 
For each $i \in \{1,2,\ldots, n'\}$, 
$\delta_{i}^t \in M_{1, n'} (\Z)$ denotes 
the transpose of $\delta_{i}$, that is, 
the $i$th row of the identity $I \in M_{n', n'} (\Z)$. 

\begin{lemma}\label{bver}
Let $n, n' \in \{0,1,2,\ldots,\infty\}$. 
For $B \in M_{n, n'} (\Z)$, 
the following three conditions are equivalent:
\begin{enumerate}
\item For every $i \in \{1,2,\ldots, n'\}$ 
there exists $\eta_i \in\Z^n$ 
such that $\eta_i^t B - \delta_{i}^t \in M_{1, n'} (\Z^+)$, 
chosen such that for all finite subsets $F \subset \{1,2,\ldots, n\}$, 
there exists a finite subset $G \subset \{1,2,\ldots, n'\}$ 
such that for every $i \not\in G$ and every $j \in F$,
we get $\eta_{i,j}=0$ .
\item There exists $B' \in M_{n', n} (\Z)$ 
such that $B'B-I \in M_{n', n'} (\Z^+)$. 
\item For every $m \in \{0,1,2,\ldots,\infty\}$ 
and every $Y \in M_{m, n'} (\Z)$, 
there exists $Q \in M_{m, n} (\Z)$ such that 
$QB + Y \in M_{m, n'} (\Z^+)$. 
\end{enumerate}
\end{lemma}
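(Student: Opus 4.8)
The plan is to prove the cycle $(1)\Rightarrow(2)\Rightarrow(3)\Rightarrow(1)$, following the proof of Lemma~\ref{aver} but replacing every right multiplication by its left-handed analogue, and paying close attention to the column-finiteness bookkeeping that is free when $n'$ is finite. For $(1)\Rightarrow(2)$ I would assemble the vectors $\eta_i$ into the rows of a single matrix by setting $B'(i,j):=\eta_{i,j}$. The displayed support clause in $(1)$ says exactly that each column of $B'$ has finite support, so $B'\in M_{n',n}(\Z)$; and since the $i$-th row of $B'B-I$ is $\eta_i^t B-\delta_i^t$, the hypothesis $\eta_i^t B-\delta_i^t\in M_{1,n'}(\Z^+)$ for every $i$ gives $B'B-I\in M_{n',n'}(\Z^+)$.

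The implication $(2)\Rightarrow(3)$ is the transpose of the corresponding step of Lemma~\ref{aver}. Given $m$ and $Y\in M_{m,n'}(\Z)$, I would set $Q:=|Y|B'$, where $|Y|$ is the entrywise absolute value, so that $QB+Y=|Y|(B'B-I)+(|Y|+Y)$; both summands have non-negative entries, whence $QB+Y\in M_{m,n'}(\Z^+)$. The points needing care are that $Q$ is column-finite, which holds because a product of column-finite matrices is again column-finite, and that the factorization $(|Y|B')B=|Y|(B'B)$ is legitimate, which is just associativity of the underlying module maps.

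For $(3)\Rightarrow(1)$ I would apply $(3)$ a single time with $m=n'$ and $Y=-I$, obtaining $Q\in M_{n',n}(\Z)$ with $QB-I\in M_{n',n'}(\Z^+)$. Taking $\eta_i$ to be the $i$-th row of $Q$ then yields $\eta_i^t B-\delta_i^t\geq 0$ for all $i$ simultaneously, and the support clause of $(1)$ is now precisely the column-finiteness of $Q$: given a finite $F\subset\{1,\ldots,n\}$, take $G$ to be the union over $j\in F$ of the (finite) supports of the columns of $Q$.

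The main obstacle, and the reason the case $n'=\infty$ is genuinely harder than both the finite case and Lemma~\ref{aver}, is the joint support condition, that is, the column-finiteness of the assembled matrix. In Lemma~\ref{aver} the per-index data are \emph{columns} $\xi_i$, which are automatically finitely supported and carry no compatibility requirement, so there one may argue one index at a time; here the per-index data are \emph{rows} $\eta_i$, which may have infinite support and must additionally fit together column-finitely. A coordinatewise construction, applying $(3)$ with $m=1$ and $Y=-\delta_i^t$, would produce each $\eta_i$ in isolation with no control over the joint support, so the essential device is to produce the entire matrix in one stroke via $Y=-I$ and let the column-finiteness built into $M_{n',n}(\Z)$ deliver the support clause for free. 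I expect the only delicate remaining point to be verifying that each matrix product appearing above stays column-finite.
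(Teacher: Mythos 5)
Your implications $(2)\Rightarrow(3)$ and $(3)\Rightarrow(1)$ match the paper: $(2)\Rightarrow(3)$ is verbatim the paper's argument with $Q=|Y|B'$, and your $(3)\Rightarrow(1)$ via $m=n'$, $Y=-I$ simply merges the paper's two steps $(3)\Rightarrow(2)$ and $(2)\Rightarrow(1)$ into one. The problem is $(1)\Rightarrow(2)$, and it is exactly where the paper spends all of its effort. You read the support clause of $(1)$ as a property of the single family $(\eta_i)$ that witnesses positivity --- that is, as column-finiteness of the matrix whose rows are the $\eta_i^t$ --- whereupon $(1)$ becomes a verbatim restatement of $(2)$ and the implication is a tautology. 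The paper's own proof reads $(1)$ more weakly: the vectors witnessing positivity are not assumed to satisfy the vanishing condition; rather, for each finite $F\subset\{1,\dots,n\}$ there is a finite $G$ such that for every $i\notin G$ one may \emph{re-choose} $\eta_i$ (depending on $F$) so that it still satisfies $\eta_i^tB-\delta_i^t\geq 0$ and in addition vanishes on $F$. Under that reading, the family you assemble into $B'$ need not be column-finite, and your $(1)\Rightarrow(2)$ begs the question.

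This is not a pedantic distinction, because for $n'=\infty$ the paper's proof of $(1)\Rightarrow(2)$ is a genuine inductive construction: it takes $G_k$ to be the finite set furnished by $(1)$ for $F=\{1,\dots,k\}$, builds pairwise disjoint finite sets $G'_1=\{1\}\cup G_1$ and $G'_k=(\{k\}\cup G_k)\setminus\bigcup_{j<k}G'_j$ that cover $\{1,\dots,n'\}$ and satisfy $G'_k\cap G_{k-1}=\emptyset$, and then, for each $i\in G'_k$, invokes the support clause to re-choose $\eta_i$ vanishing on $\{1,\dots,k-1\}$; only after this re-choice is the assembled $B'$ column-finite. So your proof is complete only under the stronger (tautological) reading of $(1)$ --- under which, one might add, there would be little point in listing $(1)$ and $(2)$ as separate equivalent conditions. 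In mitigation: the statement as written is genuinely ambiguous, and the one place the paper verifies $(1)$ (Lemma \ref{adhrelevance}, for right adhesive graphs) it does exhibit a single family with the joint finiteness property, so your version of the lemma would suffice for the paper's applications. But as a proof of Lemma \ref{bver} as the authors evidently understand it, your first implication has a hole, and it is precisely the hole that the paper's inductive disjointification argument is designed to fill.
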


\begin{proof}
(1)$\Rightarrow$(2): 
When $n'$ is finite, $B' \in M_{n', n} (\Z)$ 
can be defined so that for each $i \in \{1,2,\ldots, n'\}$ 
the $i$th row of $B$ is $\eta_i^t \in M_{1, n} (\Z)$ 
satisfying $\eta_i^t B - \delta_{i}^t \in M_{1, n'} (\Z^+)$ 
which exist by condition (1). 
When $n'$ is infinite, 
the condition (1) implies that $n$ is also infinite. 
For each integer $k$, 
let $G_k \subset \{1,2,\ldots, n'\}$ 
be a finite set as in (1) for the finite set $F=\{1,2,\ldots,k\}$. 
We define a finite set $G'_k \subset \{1,2,\ldots, n'\}$ 
for $k=1,2,\ldots$ inductively by 
\[
G'_1 := \{1\} \cup G_1,\quad 
G'_{k} := \{k\} \cup G_k \setminus \Big(\bigcup_{j=1}^{k-1} G'_j\Big). 
\]
From this definition, 
it is easy to see that the $\{G'_k\}_{k=1}^\infty$ are mutually disjoint, 
that their union is the whole of $\{1,2,\ldots, n'\} = \N$, 
and $G'_{k} \cap G_{k-1} = \emptyset$ 
for all $k > 1$. 
For $i \in G'_1$, 
choose $\eta_i \in \Z^n$ 
such that $\eta_i^t B - \delta_{i}^t \in M_{1, n'} (\Z^+)$ 
by the first condition of (1). 
For each $i \in G'_{k}$ for $k >1$, 
choose $\eta_i^t \in M_{1, n} (\Z)$ 
such that $\eta_i^t B - \delta_{i}^t \in M_{1, n'} (\Z^+)$ 
and $(\eta_i^t)_{1,j}=0$ for $j=1,2, \ldots, k-1$ 
by the latter condition of (1). 
We set $B' \in M_{n', n} (\Z)$ by $B'_{i,j} = (\eta_i)_{1,j}$. 
We need to check that $B'$ is column-finite, 
which follows from the fact that for each $j \in \{1,2,\ldots, n\}$, 
$B'_{i,j} \neq 0$ implies $i$ is in the finite set 
$\bigcup_{k=1}^{j-1} G'_k$.  
Now it is easy to see $B'B-I \in M_{n', n'} (\Z^+)$. 

(2)$\Rightarrow$(1): 
Take $B'\in M_{n', n} (\Z)$ as in (2), 
For each $i$, let $\eta_i^t \in\Z^n$ 
be the $i$th row of $B$. 
Then we have $\eta_i^t B - \delta_{i}^t \in M_{1, n'} (\Z^+)$. 
Thus we get the former condition of (1). 
This choice of $\eta_i^t$'s 
also satisfies the latter condition of (1) 
if for a given finite subset $F \subset \{1,2,\ldots, n\}$, 
we choose $G \subset \{1,2,\ldots, n'\}$ by 
\[
G = \bigcup_{j \in F} \{ i : B_{i,j} \neq 0\}
\]
which is finite because $B$ is column-finite. 

(2)$\Rightarrow$(3): 
Take $m$ and $Y \in M_{m, n'} (\Z)$. 
We set $|Y| \in M_{m, n'} (\Z)$ to be 
the entry-wise absolute value of $Y$; i.e., $|Y| (i,j) := |Y(i,j)|$.  
Let $B'\in M_{n', n} (\Z)$ be as in (2), 
and set $Q := |Y|B' \in M_{m, n} (\Z)$. 
Then we have 
\[
QB + Y 
= |Y|(B'B - I) + (|Y| + Y) \in M_{m, n'} (\Z^+). 
\]

(3)$\Rightarrow$(2): 
Apply (3) to $m=n$ and $Y= -I$.
\end{proof}

\begin{proposition} \label{positive-Y-prop}
In the situation of Proposition~\ref{piece-together-prop}, assume that $Z\in M_{n_1,n_3'}(\Z)$ is given.
If $A$ satisfies the equivalent conditions of Lemma \ref{aver} or $B$ satisfies the equivalent conditions of Lemma \ref{bver}, then the matrix $Y\in M_{n_1,n_3'}(\Z)$ along with $\alpha_2$, $\beta_2$ inducing the isomorphism may be chosen with the additional property $Y\geq Z$.
\end{proposition}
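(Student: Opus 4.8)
The plan is to start from \emph{any} solution $(Y,\alpha_2,\beta_2)$ already produced by Proposition~\ref{piece-together-prop} and then to perturb $Y$, keeping the realization of $\mathcal{E}$ intact, until the perturbed matrix dominates $Z$. The key observation is that $Y$ is far from unique: replacing $Y$ by $Y+AQ$ (when $A$ satisfies Lemma~\ref{aver}) or by $Y+QB$ (when $B$ satisfies Lemma~\ref{bver}) amounts to multiplying the block matrix by an invertible elementary matrix, and such a multiplication is invisible to the snake-lemma data that pins down the isomorphism class of the sequence. The required positivity is then arranged by feeding $Y-Z$ into part~(3) of the appropriate lemma, whose ``for every $Y$'' clause is exactly what is needed.

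First I would record the two algebraic identities that drive the argument. Writing $M=\left(\begin{smallmatrix}A&Y\\0&B\end{smallmatrix}\right)$, one has $\left(\begin{smallmatrix}A&Y+AQ\\0&B\end{smallmatrix}\right)=M\left(\begin{smallmatrix}I&Q\\0&I\end{smallmatrix}\right)$ for $Q\in M_{n_1',n_3'}(\Z)$, and $\left(\begin{smallmatrix}A&Y+QB\\0&B\end{smallmatrix}\right)=\left(\begin{smallmatrix}I&Q\\0&I\end{smallmatrix}\right)M$ for $Q\in M_{n_1,n_3}(\Z)$; in both cases the elementary factor is column-finite and invertible with inverse $\left(\begin{smallmatrix}I&-Q\\0&I\end{smallmatrix}\right)$. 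For the right-multiplication perturbation $M'=MN$ with $N$ invertible one gets $\im M'=\im M$ and $\ker M'=N^{-1}(\ker M)$, so that $\coker M'=\coker M$ while $N$ restricts to an isomorphism $\ker M'\to\ker M$. Dually, for the left-multiplication perturbation $M'=LM$ with $L$ invertible one gets $\ker M'=\ker M$ together with an induced isomorphism $\bar L\colon\coker M\to\coker M'$. In either case the connecting map $[Y']\colon\ker B\to\coker A$ still equals $[Y]$: for the $QB$ perturbation because $Y'\xi=Y\xi$ whenever $\xi\in\ker B$, and for the $AQ$ perturbation because $Y'\xi-Y\xi=AQ\xi\in\im A$ maps to zero in $\coker A$. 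Hence the data $\alpha_1,\alpha_3,\beta_1,\beta_3$ and $\delta$ carry over unchanged, and I would set $\alpha_2'=\alpha_2$, $\beta_2'=\beta_2\circ N$ in the $AQ$ case, and $\beta_2'=\beta_2$, $\alpha_2'=\alpha_2\circ\bar L^{-1}$ in the $QB$ case. Since $\left(\begin{smallmatrix}I&Q\\0&I\end{smallmatrix}\right)$ fixes the relevant $\Z^{n_1'}$- (resp.\ $\Z^{n_1}$-) summand, a short chase shows compatibility with the inclusions $I,I'$ and projections $P,P'$, so the whole diagram \eqref{isodef} commutes; the Five Lemma, exactly as in Proposition~\ref{piece-together-prop}, then gives that $\alpha_2',\beta_2'$ are isomorphisms.

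It remains only to choose $Q$ forcing $Y'\geq Z$. If $A$ satisfies the conditions of Lemma~\ref{aver}, I would apply part~(3) with $m=n_3'$ to the input matrix $Y-Z\in M_{n_1,n_3'}(\Z)$, obtaining $Q\in M_{n_1',n_3'}(\Z)$ with $AQ+(Y-Z)\in M_{n_1,n_3'}(\Z^+)$; then $Y'=Y+AQ$ satisfies $Y'-Z\geq0$. Symmetrically, if $B$ satisfies the conditions of Lemma~\ref{bver}, I would apply part~(3) there with $m=n_1$ to the input $Y-Z$, obtaining $Q\in M_{n_1,n_3}(\Z)$ with $QB+(Y-Z)\in M_{n_1,n_3'}(\Z^+)$, so that $Y'=Y+QB\geq Z$. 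Both perturbations preserve column-finiteness, since $AQ$ and $QB$ are compositions of column-finite maps and sums of column-finite matrices are column-finite.

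The step requiring the most care is the bookkeeping of the previous paragraph: one must verify that passing from $M$ to $M'$ alters \emph{only} the middle column of \eqref{isodef}, and does so through an isomorphism (either $N$ on kernels or $\bar L$ on cokernels) that is compatible with the horizontal inclusion and projection maps, so that precisely one of $\alpha_2,\beta_2$ is retained verbatim and the other is composed with the elementary isomorphism, all while the connecting map $[Y]$ stays rigidly in place. Once this compatibility is checked, the positivity is immediate, because the dominance of \emph{an arbitrary} integer matrix is exactly the content provided by condition~(3) of Lemmas~\ref{aver} and~\ref{bver}.
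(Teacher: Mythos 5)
Your proposal is correct and follows essentially the same route as the paper: starting from a solution $(Y',\alpha_2',\beta_2')$ of Proposition~\ref{piece-together-prop}, you perturb $Y'$ by $AQ$ (composing $\beta_2'$ with $\left(\begin{smallmatrix}I&Q\\0&I\end{smallmatrix}\right)$ on kernels) or by $QB$ (composing $\alpha_2'$ with the map induced by $\left(\begin{smallmatrix}I&-Q\\0&I\end{smallmatrix}\right)$ on cokernels), with $Q$ supplied by condition~(3) of Lemma~\ref{aver} or~\ref{bver} applied to $Y'-Z$ — exactly the paper's construction. Your explicit verification that the elementary factor leaves the connecting map $[Y]$ and the horizontal maps of the diagram untouched is precisely the "one checks directly" step the paper omits.
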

\begin{proof}
Let $Y'\in M_{n_1,n_3'}(\Z)$ denote a matrix already chosen in Proposition~\ref{piece-together-prop}, along with maps $\alpha_2'$ and $\beta_2'$. Assume first that $A$ satisfies the conditions of Lemma \ref{aver}. Then by (3) of the lemma  we may choose $Q\in M_{n_1',n_3'}(\Z)$ such that
\[
AQ+[Y'-Z]\in M_{n_1,n_3'}(\Z^+)
\]
One checks directly that with
\begin{eqnarray*}
Y&=&AQ+Y'\\
\beta_2&=&\beta_2'\circ \begin{pmatrix}
I&Q\\0&I
 \end{pmatrix}
\\
\alpha_2&=&\alpha_2'
\end{eqnarray*}
the conditions are all met.

When $B$ satisfies the conditions of Lemma \ref{bver}, we choose $Q\in M_{n_1,n_3}(\Z)$ such that
\[
QB+[Y'-Z]\in M_{n_1,n_3'}(\Z^+)
\]
and set
\begin{eqnarray*}
Y&=&Y'+QB\\
\beta_2&=&\beta_2'\\
\alpha_2&=&
\alpha_2'\circ \begin{pmatrix}
I&-Q\\0&I
 \end{pmatrix}
\end{eqnarray*}
\end{proof}

\begin{proposition} \label{unit-Y-prop}
In the situation of Proposition~\ref{piece-together-prop}, assume that $n_1,n_3<\infty$, and that $g_2\in G_2$ is given with  $\alpha_3([\mathbf 1])=\gamma(g_2)$. If $B$ satisfies the condition that for some $1\leq i,j<n_3$ we have
\[
B_{ik}<B_{jk}\qquad 1\leq k<n_3',
\]
then the matrix $Y\in M_{n_1,n_3'}(\Z)$ along with $\alpha_2$, $\beta_2$ inducing the isomorphism may be chosen with the additional property $\alpha_2([\mathbf 1])=g_2$.
\end{proposition}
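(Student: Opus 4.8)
The plan is to keep the isomorphism produced by Proposition~\ref{piece-together-prop} essentially intact and to correct only the position of the unit by a single row operation of the kind already studied in Proposition~\ref{positive-Y-prop}. Let $Y'$, $\alpha_2'$, $\beta_2'$ be the data furnished by Proposition~\ref{piece-together-prop} (one may invoke Proposition~\ref{positive-Y-prop} simultaneously if one also wants $Y\ge Z$), and recall that for any $Q\in M_{n_1,n_3}(\Z)$ the matrix $Y=Y'+QB$ together with $\beta_2=\beta_2'$ and $\alpha_2=\alpha_2'\circ\left(\begin{smallmatrix}I&-Q\\0&I\end{smallmatrix}\right)$ again induces the isomorphism, since $\left(\begin{smallmatrix}I&Q\\0&I\end{smallmatrix}\right)$ is an automorphism of $\Z^{n_1}\oplus\Z^{n_3}$ carrying the image of $\left(\begin{smallmatrix}A&Y'\\0&B\end{smallmatrix}\right)$ onto that of $\left(\begin{smallmatrix}A&Y\\0&B\end{smallmatrix}\right)$. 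The whole problem is therefore to choose $Q$ so that the corrected $\alpha_2$ sends $[\mathbf 1]$ to $g_2$.

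First I would identify the discrepancy. Put $g_2'=\alpha_2'([\mathbf 1])$. The commuting square $\gamma\circ\alpha_2'=\alpha_3\circ P$ from Proposition~\ref{piece-together-prop}, together with $P([\mathbf 1])=[\mathbf 1]$ and the hypothesis $\alpha_3([\mathbf 1])=\gamma(g_2)$, gives $\gamma(g_2')=\gamma(g_2)$, so $g_2-g_2'\in\ker\gamma=\operatorname{im}\epsilon$. As $\pi_1\colon\Z^{n_1}\to G_1$ is surjective, there is $z\in\Z^{n_1}$ with $g_2-g_2'=\epsilon(\pi_1(z))$. A direct computation with $\pi_2'(x,y)=\epsilon(\pi_1(x))+\mu(y)$ shows that the row operation moves the unit to $\alpha_2([\mathbf 1])=g_2'-\epsilon(\pi_1(Q\mathbf 1))$; hence it suffices to arrange $Q\mathbf 1=-z$.

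The step where the hypothesis on $B$ is used is the simultaneous choice of $Q$ that fixes the unit and keeps $Y=Y'+QB$ nonnegative (indeed $\ge Z$). I would take $Q$ supported on the two distinguished columns $i$ and $j$, with row $p$ equal to $Q_{pi}=-z_p-t_p$, $Q_{pj}=t_p$ for nonnegative integers $t_p$ to be fixed. Then every row of $Q$ sums to $-z_p$, so $Q\mathbf 1=-z$ and the unit is corrected no matter how the $t_p$ are chosen; in particular $t_p=0$ already proves the bare statement. Moreover $(QB)_{pk}=-z_pB_{ik}+t_p(B_{jk}-B_{ik})$, and the strict domination $B_{ik}<B_{jk}$ forces $B_{jk}-B_{ik}\ge1$, so increasing $t_p$ raises every entry of the $p$th row of $QB$; taking each $t_p$ large enough yields $Y'+QB\ge Z$.

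The main obstacle is exactly this decoupling. Fixing the unit pins down the row sums of $Q$ (namely $Q\mathbf 1=-z$), whereas positivising $Y$ wants $QB$ as large as possible, and a naive correction such as $Q=-z\,\delta_i^t$ would ruin nonnegativity. The two-row hypothesis breaks the deadlock because the covector $\delta_j^t-\delta_i^t$ is annihilated by $\mathbf 1$ yet $(\delta_j^t-\delta_i^t)B$ is strictly positive, so it can be added with arbitrarily large coefficients $t_p$ without disturbing the unit. With $Y=Y'+QB$, $\beta_2=\beta_2'$ and $\alpha_2=\alpha_2'\circ\left(\begin{smallmatrix}I&-Q\\0&I\end{smallmatrix}\right)$, commutativity of the six-term diagram and the isomorphism property of $\alpha_2,\beta_2$ follow verbatim from Proposition~\ref{positive-Y-prop}, while by construction $\alpha_2([\mathbf 1])=g_2$.
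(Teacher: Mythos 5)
Your proof is correct and takes essentially the same route as the paper: both fix the unit by the row operation $Y=Y'+QB$, $\beta_2=\beta_2'$, $\alpha_2=\alpha_2'\circ\left(\begin{smallmatrix}I&-Q\\0&I\end{smallmatrix}\right)$, where $Q$ is chosen with prescribed row sums (so that $Q\mathbf 1$ lifts the discrepancy $g_2-\alpha_2'([\mathbf 1])$ through $\epsilon\circ\pi_1$) plus large multiples of a covector supported on the two distinguished rows of $B$, which is annihilated by $\mathbf 1$ yet pairs with $B$ to give strictly positive rows, restoring $Y\geq Z$. Incidentally, your sign convention is the consistent one: the paper's proof puts $+1$ in column $i$ and $-1$ in column $j$ and claims the resulting rows $B_{i,\cdot}-B_{j,\cdot}$ are strictly positive, which contradicts the stated hypothesis $B_{ik}<B_{jk}$, whereas your correction $t_p(\delta_j^t-\delta_i^t)$ is exactly what is needed.
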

\begin{proof}
Take $Y'$, $\beta_2'$ and $\alpha_2'$ as in Proposition \ref{piece-together-prop}
and set
\[
g_2'=\alpha_2'([\mathbf 1])-g_2.
\]
Observe that $\gamma(g_2')=0$ because 
\[
\alpha_3([\mathbf 1])=\alpha_3\left(\left(\begin{smallmatrix}\mathbf 1\\\mathbf 1
  \end{smallmatrix}\right)
  \right)=\gamma(g_2)
\]
Hence there exists $\xi\in\Z^{n_1}$ such that $\epsilon(\alpha_1([\xi]))=g_2'$. Choose $Q'\in M_{n_1,n_3}(\Z)$ such that $\xi=Q'\mathbf 1$, which is possible because $n_3\geq 2$.
Find an integer  $c>0$ so that with $Q''\in M_{n_1,n_3}(\Z)$ defined by
\[
(Q'')_{k,\ell}=\begin{cases} 1&\ell=i\\-1&\ell=j\\0&\text{else}
\end{cases}
\]
we have
\[
(Q'+cQ'')B\geq Z-Y '
\]
This is possible because each row of $Q''B$ is identically
\[
\begin{pmatrix}
B_{i,1}-B_{j,1}&B_{i,2}-B_{j,2}&\cdots& B_{i,n_3}-B_{j,n_3}
\end{pmatrix}
\]
which is strictly positive by assumption on $B$. Set $Q=Q'+NQ''$ and $Y=Y'+QB$, and let
\begin{gather*}
\beta_2=\beta_2'\qquad\alpha_2=\alpha_2'\circ \begin{pmatrix}I&-Q\\0&I
\end{pmatrix}.
\end{gather*}
Then obviously $Y\geq Z$, and we get
\begin{eqnarray*}
\alpha_2([\mathbf 1])&=&\alpha_2'\left(\begin{pmatrix}I&-Q\\0&I
  \end{pmatrix}\begin{pmatrix}\mathbf 1\\\mathbf 1
  \end{pmatrix}\right)\\
&=&\alpha_2'\left(\begin{pmatrix}\mathbf 1\\\mathbf 1
 \end{pmatrix}-\begin{pmatrix}Q\mathbf 1\\\mathbf 0
 \end{pmatrix}
\right)
\\
&=&\alpha'_2(\mathbf 1)-\alpha_2'(I(Q\mathbf 1))\\
&=&\alpha'_2(\mathbf 1)-\epsilon\circ\alpha_1([\xi])\\
&=&\alpha_2'(\mathbf 1)-g_2'\\
&=&g_2
\end{eqnarray*}
\end{proof}

\begin{proposition} \label{split-Y-prop}
In the situation of Proposition~\ref{piece-together-prop}, assume that   $n_1,n_3<\infty$, that $F_3=0$, and that there exists a splitting map $\sigma:G_3\to G_2$ for $\gamma$.
Let $g_2\in G_2$ be given, set $g_3=\gamma(g_2)\in G_3$ and let $g_1$ be the unique element of $G_1$ with $\epsilon(g_1)=g_2-\sigma(g_3)$. If
\[
\alpha_1(\mathbf 1)=g_1\qquad \alpha_3(\mathbf 1)=g_3,
\]
then there exist maps 
\begin{align*} 
\alpha_2 \colon \coker \left(\begin{smallmatrix} A & 0 \\ 0 & B \end{smallmatrix} \right) \to G_2, &
& \beta_2 &\colon \ker \left(\begin{smallmatrix} A & 0 \\ 0 & B \end{smallmatrix} \right) \to F_2
\end{align*}
such that with $Y=0$, the collection of maps $\alpha_i$ and $\beta_i$ for $i=1,2,3$ 
provide an isomorphism, and  $\alpha_2([\mathbf 1])=g_2$.
\end{proposition}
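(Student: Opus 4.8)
The plan is to bypass the general construction of Proposition~\ref{piece-together-prop} and instead write down $\alpha_2,\beta_2$ by hand, exploiting the splitting $\sigma$ together with the simplifications forced by $F_3=0$. First I would record these simplifications. Since $\beta_3\colon\ker B\to F_3$ is an isomorphism, $\ker B=0$. Exactness of $\mathcal E$ at $G_1$ gives $\ker\epsilon=\im\delta=0$, so $\epsilon$ is injective; exactness at $F_1$ and $F_2$ shows that $\epsilon'\colon F_1\to F_2$ is in fact an isomorphism, its kernel being the image of the vanishing index map and its cokernel being annihilated by $\gamma'\colon F_2\to F_3=0$. Because $Y=0$ the block matrix is block-diagonal, so there are canonical identifications
\[
\coker\begin{pmatrix}A&0\\0&B\end{pmatrix}\cong\coker A\oplus\coker B,
\qquad
\ker\begin{pmatrix}A&0\\0&B\end{pmatrix}\cong\ker A\oplus\ker B=\ker A,
\]
under which $I,P$ of \eqref{snake-maps-again} are the inclusion of, and projection onto, the respective summands, and $I'$ is an isomorphism onto the first summand.

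With these identifications the definitions are essentially forced. I would put
\[
\alpha_2([x],[y]):=\epsilon(\alpha_1([x]))+\sigma(\alpha_3([y])),
\qquad
\beta_2:=\epsilon'\circ\beta_1\circ(I')^{-1},
\]
where the splitting $\sigma$ plays the role of the lift $\mu$ of $\pi_3$ appearing in Proposition~\ref{piece-together-prop}, and where $(I')^{-1}$ makes sense precisely because $\ker B=0$. One checks that $\alpha_2$ descends to the cokernel since $\epsilon\circ\alpha_1$ kills $\im A$ and $\sigma\circ\alpha_3$ kills $\im B$. Commutativity of the two upper squares of the isomorphism diagram is immediate from $\gamma\circ\epsilon=0$ and $\gamma\circ\sigma=\operatorname{id}$; the two lower squares reduce to $\beta_2\circ I'=\epsilon'\circ\beta_1$, which holds by definition, because $\gamma'$ and $\beta_3$ are maps to or from the zero group; and the remaining two squares are trivial as $\delta=0$ and $[Y]=0$. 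Then $\beta_2$ is an isomorphism as a composite of isomorphisms, and $\alpha_2$ is an isomorphism by the Five Lemma applied to the split short exact sequence $0\to\coker A\to\coker A\oplus\coker B\to\coker B\to0$ together with $0\to G_1\xrightarrow{\epsilon}G_2\xrightarrow{\gamma}G_3\to0$.

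It remains to verify the unit condition, which is where the hypotheses on $g_1,g_2,g_3$ enter. Since $n_1,n_3<\infty$ the vector $\mathbf 1$ decomposes as $(\mathbf 1,\mathbf 1)$ under the direct-sum identification, so
\[
\alpha_2([\mathbf 1])=\epsilon(\alpha_1([\mathbf 1]))+\sigma(\alpha_3([\mathbf 1]))
=\epsilon(g_1)+\sigma(g_3)=(g_2-\sigma(g_3))+\sigma(g_3)=g_2,
\]
using $\alpha_1(\mathbf 1)=g_1$, $\alpha_3(\mathbf 1)=g_3$, and the defining relation $\epsilon(g_1)=g_2-\sigma(g_3)$. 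I do not expect a genuine obstacle here: the hypothesis $F_3=0$ collapses the six-term sequence into an honest split extension on the $G$-side and an isomorphism $\epsilon'$ on the $F$-side, so every map is canonical and $Y=0$ is admissible. The only points demanding care are the verification that $\epsilon'$ is an isomorphism and that both the index map and $[Y]$ vanish—precisely the facts that allow the off-diagonal block to be dispensed with.
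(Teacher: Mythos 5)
Your proof is correct and is essentially identical to the paper's: the paper's (very terse) proof defines $\alpha_2\left(\left(\begin{smallmatrix}\xi\\ \eta\end{smallmatrix}\right)\right)=\epsilon(\alpha_1([\xi]))+\sigma(\alpha_3([\eta]))$ and $\beta_2\left(\left(\begin{smallmatrix}\xi\\ 0\end{smallmatrix}\right)\right)=\epsilon'(\beta_1(\xi))$, which are exactly your two maps once one notes $\ker B=0$. You have merely written out the verifications (well-definedness, commutativity, the Five Lemma step, and the unit computation) that the paper leaves to the reader.
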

\begin{proof}
Let
\[
\alpha_2\left(\left(\begin{smallmatrix}\xi\\\eta
  \end{smallmatrix}\right)\right)=\epsilon(\alpha_1([\xi]))+\sigma(\alpha_3([\eta]))
\]
and
\[
\beta_2\left(\left(\begin{smallmatrix}\xi\\0
  \end{smallmatrix}\right)\right)=
\epsilon'(\beta_1(\xi)).
\]
\end{proof}

\section{Gluing graphs}\label{gluing}

Suppose two graphs $E_1$ and $E_3$ are given along with groups $G_2$ and $F_2$ that satisfy the following  diagram $\mathcal E$ given by
\begin{equation} \label{half-six-term}
\xymatrix{
K_0(C^*(E_1)) \ar[r]^-{\epsilon} & G_2 \ar[r]^-{\gamma} & K_0(C^*(E_3)) \ar[d]^-{0} \\
K_1(C^*(E_3)) \ar[u]^-{\partial_1} & F_2 \ar[l]_-{\gamma'} & K_1(C^*(E_1)). \ar[l]_-{\epsilon'} 
}
\end{equation}
In the present section we investigate circumstances under which it is possible to glue together $E_1$ and $E_3$ to form a third graph $E_2$ whose $C^*$-algebra has $\mathcal E$ as it six-term exact sequence in $K$-theory.  We shall see that the results of the previous section allow us to perform such a gluing under very modest assumptions on either $E_1$ or $E_3$, realizing the sequence of groups in $\mathcal{E}$.  However, since there are natural obstructions on the order on $G_2$ for this ordered group to originate from a graph $C^*$-algebra, we will need to impose further restrictions before being able to realize the ordered sequence $\mathcal E^+$ consisting of an exact sequence of partially ordered groups. The necessity of these conditions follow from the fullness issues considered in \cite{ET1} and \cite{ERR2}, but for the reader's convenience we shall develop them by much more elementary methods at the end of the section.

 \subsection{Adhesive graphs}

\begin{definition}\label{leftadh}
We say that the graph $E$ is \emph{left adhesive} if 
for any $v_0\in E^0$ there exist distinct $e_0,e_1,\dots,e_n\in E^1$ such that  $V_{v_0}=\{r(e_k)\mid k=1,\dots,n\}$ 
\begin{enumerate}[(i)]
\item $r(e_0)=v_0$
\item $s(e_k)\in V_{v_0}$ for all $k=0,1,\dots, n$
\item $V_{v_0}\subseteq E^0_\textnormal{reg}$
\end{enumerate}
where $V_{v_0} :=\{r(e_k)\mid k=1,\dots,n\}$.
\end{definition}

\begin{definition}\label{rightadh}
We say that the graph $E$ is \emph{right adhesive} if
for any $v_0\in (E^0)_\textnormal{reg}$ there exist distinct $e_0,e_1,\dots,e_n\in E^1$ such that
\begin{enumerate}[(i)]
\item $s(e_0)=v_0$
\item $r(e_k)\in W_{v_0}$ for all $k=0,1,\dots, n$
\item $W_{v_0}$ satisfies that for any $w\in E^0$, the set $\{v_0\in E^0_\textnormal{reg}\mid w\in W_{v_0}\}$ is finite
\end{enumerate}
where $W_{v_0}:=\{r(e_k)\mid k=1,\dots,n\}$.
\end{definition}

The reader is requested to note the similarities between these concepts, and how there is only partial symmetry. The relevance of adhesiveness in our situation is explained by the following lemma.

\begin{lemma}\label{adhrelevance}
When $E=(E^0,E^1,r,s)$ is left adhesive, then $R_E-I$ satisfies the equivalent conditions of Lemma \ref{aver}. When $E$ is right adhesive, 
$R_E-I$ satisfies the equivalent conditions of Lemma \ref{bver}.
\end{lemma}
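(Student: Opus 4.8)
The plan is to verify condition (1) of each of the two lemmas directly, taking as the witnesses $\xi_i$ and $\eta_i$ the characteristic vectors of the sets $V_{v_0}$ and $W_{v_0}$ produced by adhesiveness. In both cases the computation rests on the bookkeeping identity that, for a $0/1$ vector supported on a set $S$, the relevant coordinate of $(R_E-I)$ applied to it counts edges between $S$ and the coordinate's vertex and then subtracts a $1$ exactly on $S$.

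For the left-adhesive case, fix $v_0\in E^0$ and let $V_{v_0}\subseteq E^0_\textnormal{reg}$ be the finite set from Definition~\ref{leftadh}; set $\xi=\sum_{w\in V_{v_0}}\delta_w\in\Z^{E^0_\textnormal{reg}}$, which is legitimate precisely because (iii) gives $V_{v_0}\subseteq E^0_\textnormal{reg}$. The $v$-th coordinate of $(R_E-I)\xi$ equals $\#\{e\in E^1:r(e)=v,\ s(e)\in V_{v_0}\}-[v\in V_{v_0}]$, the number of edges into $v$ emanating from $V_{v_0}$, diminished by $1$ when $v\in V_{v_0}$. For $v\notin V_{v_0}$ this is automatically non-negative; for $v\in V_{v_0}$ we have $v=r(e_k)$ for some $k\geq 1$ with $s(e_k)\in V_{v_0}$ by (ii), so the count is at least $1$ and the coordinate is non-negative; and at $v=v_0$ the additional edge $e_0$ (with $r(e_0)=v_0$ and $s(e_0)\in V_{v_0}$), distinct from any such $e_k$, provides an incoming edge beyond what the $-[v_0\in V_{v_0}]$ term removes, so the $v_0$-coordinate is at least $1$. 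Hence $(R_E-I)\xi-\delta_{v_0}\in(\Z^+)^{E^0}$, which is exactly condition (1) of Lemma~\ref{aver}; no finiteness clause is needed there, since the columns $\xi=\chi_{V_{v_0}}$ are finitely supported.

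For the right-adhesive case I would dually set $\eta_{v_0}=\sum_{w\in W_{v_0}}\delta_w\in\Z^{E^0}$ for each $v_0\in E^0_\textnormal{reg}$ and read off the $w$-th coordinate of the row vector $\eta_{v_0}^t(R_E-I)$ as $\#\{e\in E^1:s(e)=w,\ r(e)\in W_{v_0}\}-[w\in W_{v_0}]$, the number of edges out of $w$ landing in $W_{v_0}$, diminished by $1$ when $w\in W_{v_0}$. The edge $e_0$ with $s(e_0)=v_0$ and $r(e_0)\in W_{v_0}$ forces the $v_0$-coordinate up to at least $1$, while the defining edges $e_1,\dots,e_n$ witness that every regular vertex of $W_{v_0}$ emits an edge into $W_{v_0}$, so the remaining coordinates are non-negative; this yields $\eta_{v_0}^t(R_E-I)-\delta_{v_0}^t\in M_{1,E^0_\textnormal{reg}}(\Z^+)$, the positivity half of condition (1) of Lemma~\ref{bver}.

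The main obstacle is the \emph{second}, finiteness clause of Lemma~\ref{bver}(1), which has no analogue in Lemma~\ref{aver} and is exactly the ``substantially more complicated'' infinite phenomenon flagged there: the resulting $B'$ (whose rows are the $\eta_{v_0}^t$) must be column-finite. Here I would invoke condition (iii) of Definition~\ref{rightadh}: since $(\eta_{v_0})_j=[j\in W_{v_0}]$, for a finite $F\subseteq E^0$ the set $G:=\bigcup_{j\in F}\{v_0\in E^0_\textnormal{reg}:j\in W_{v_0}\}$ is a finite union of finite sets, hence finite, and for every $v_0\notin G$ and $j\in F$ we have $j\notin W_{v_0}$, i.e. $(\eta_{v_0})_j=0$. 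This is precisely the quantifier pattern demanded in Lemma~\ref{bver}(1). I expect the genuinely delicate part to be this last matching of condition (iii) to the index bookkeeping of Lemma~\ref{bver} (and the attendant check that $B'$ is column-finite); by contrast the positivity computations in both cases are short and run off the same template.
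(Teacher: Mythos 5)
Your proof is correct and takes essentially the same route as the paper's: both use the characteristic vectors of $V_{v_0}$ and $W_{v_0}$ as the witnesses in condition (1) of Lemmas \ref{aver} and \ref{bver}, verify positivity by the same edge count (with $e_0$ supplying the extra incoming/outgoing edge at $v_0$), and both settle the column-finiteness clause of Lemma \ref{bver}(1) by taking $G=\bigcup_{w\in F}\{v_0\in E^0_\textnormal{reg}\mid w\in W_{v_0}\}$, which is finite by condition (iii) of Definition \ref{rightadh}.
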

\begin{proof}
In the first case, define $\xi_{v_0}\in \Z^{E^0_\textnormal{reg}}$ by
\[
\xi_{v_0}=\sum_{w\in V_{v_0}}\delta_w.
\]
Then $(R_E-I)\xi_{v_0}\geq \delta_{v_0}$ because 
\[
(R_E-I)\xi_{v_0}=\begin{cases}
|\{e\in E^1\mid r(e)=v, s(e)\in V_{v_0}\}|-1&\text{if }v\in V_{v_0}\\
|\{e\in E^1\mid r(e)=v, s(e)\in V_{v_0}\}|&\text{if }v\not \in V_{v_0}.
\end{cases}
\]
For every $v\in V_{v_0}$ we have at least one edge starting in $V_{v_0}$ and ending in $v$, so $(R_E-I)\xi_{v_0}\geq 0$. We also 
obviously have $(R_E-I)\xi_{v_0}\geq \delta_{v_0}$ in the case when $v\not\in V_{v_0}$. When  
$v_0\in V_{v_0}$ we have that $v_0=r(e_i)$ for $e_i\not=e_0$, so that two different edges start in $V_{v_0}$ and end in $v_0$, as required.

In the second case, define $\eta_{v_0}\in \Z^{E^0}$ by
\[
\eta_{v_0}=\sum_{w\in W_{v_0}}\delta_w
\]
and note again that
\[
\eta_{v_0}^t(R_E-I)=\begin{cases}
|\{e\in E^1\mid s(e)=v, r(e)\in W_{v_0}\}|-1&\text{if }v\in W_{v_0}\cap E_\textnormal{reg}^0\\
|\{e\in E^1\mid s(e)=v, r(e)\in W_{v_0}\}|&\text{if }v\not \in W_{v_0}\cap E_\textnormal{reg}^0
\end{cases}
\]
to get the desired conclusion $\eta_{v_0}^t(R_E-I)\geq\delta_{v_0}^t$. We also see by Condition~(iii) that  for finite $F\subseteq E^0$ we may take
\[
G=\bigcup_{w\in F}\{v_0\in E_\textnormal{reg}^0\mid w\in W_{v_0}\}
\] 
to arrange that $\eta_{v,w}=0$ when $v\in F$ and $w\not\in G$.
\end{proof}

Checking adhesiveness by the definition is rarely necessary, and in each case below we will be able to simply appeal to one of these simpler conditions:

\begin{lemma}\label{adhcheck}
Let $E=(E^0,E^1,r,s)$ be a graph. Then $E$ is left adhesive when any of the conditions hold:
\begin{itemize}
\item[$(\ell 1)$] $E^0=E^0_\textnormal{reg}$, and each $v_0\in E^0$ supports two loops. 
\item[$(\ell 2)$]  For each $v_0\in E^0$, there exist edges $e_1,\dots,e_n\in E^1$ forming a cycle so that each $s(e_k)\in E^0_\textnormal{reg}$, and $e_0\in E^1$ so that $e_0\not=e_1$, $s(e_0)=s(e_1)$, and $r(e_0)=v_0$
\end{itemize}
In addition, $E$ is right adhesive when any of the following conditions hold:
\begin{itemize}
\item[$(r0)$] $E^0_\textnormal{reg}=\emptyset$
\item[$(r1)$] Each $v_0\in E^0_\textnormal{reg}$ supports two loops. 
\item[$(r2)$] $E^0_\textnormal{reg}$ is finite,  and for each $v_0\in E^0_\textnormal{reg}$, there exist edges $e_1,\dots,e_n\in E^1$ forming a cycle, and $e_0\in E^1$ so that $e_0\not=e_1$, $r(e_0)=r(e_1)$, and $s(e_0)=v_0$.
\end{itemize}
\end{lemma}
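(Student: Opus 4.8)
The plan is to verify, for each of the five stated conditions, that the combinatorial data demanded by Definition~\ref{leftadh} or Definition~\ref{rightadh} can be written down directly. Adhesiveness is a purely graph-theoretic property, so no appeal to Lemma~\ref{aver} or Lemma~\ref{bver} is needed at this stage; those enter only through the already-established Lemma~\ref{adhrelevance}. The two ``loop'' conditions and the vacuous condition $(r0)$ will be immediate, so the only real content lies in the two ``cycle'' conditions, and within those the sole delicate point is the requirement that the chosen edges be distinct.

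First I would dispose of the easy cases. Under $(\ell 1)$, given $v_0\in E^0$ I pick two distinct loops $e_0,e_1$ based at $v_0$ and set $V_{v_0}=\{v_0\}$; then (i) and (ii) of Definition~\ref{leftadh} hold since $s(e_i)=r(e_i)=v_0$, and (iii) holds because $E^0=E^0_\textnormal{reg}$. Under $(r1)$ the analogous choice $W_{v_0}=\{v_0\}$ settles Definition~\ref{rightadh}: (i) and (ii) are clear, and the finiteness clause (iii) is trivial because $w\in W_{v_0}$ forces $w=v_0$, so $\{v_0\in E^0_\textnormal{reg}\mid w\in W_{v_0}\}$ has at most one element. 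Condition $(r0)$ makes the universally quantified statement of Definition~\ref{rightadh} vacuous, so nothing is to be checked.

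Next I turn to the cycle conditions, which are mirror images of one another. For $(\ell 2)$, given $v_0$ I take the cycle $e_1\cdots e_n$ and the edge $e_0$ supplied by the hypothesis and put $V_{v_0}=\{r(e_1),\dots,r(e_n)\}$; since this set is exactly the set of base vertices of the cycle, it lies in $E^0_\textnormal{reg}$ by hypothesis, giving (iii). Conditions (i) and (ii) are read off at once, the $k=0$ instance of (ii) following from $s(e_0)=s(e_1)\in V_{v_0}$. The symmetric argument handles $(r2)$: I set $W_{v_0}=\{r(e_1),\dots,r(e_n)\}$, observe $r(e_0)=r(e_1)\in W_{v_0}$ for (ii), and verify the finiteness clause (iii) immediately, since the set $\{v_0\in E^0_\textnormal{reg}\mid w\in W_{v_0}\}$ is contained in $E^0_\textnormal{reg}$, which is finite by assumption.

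The one point requiring care, and the main obstacle, is the demand in both definitions that $e_0,e_1,\dots,e_n$ be \emph{distinct}. I plan to secure this by taking the cycle to be simple, so that its source vertices $s(e_1),\dots,s(e_n)$ (in the case $(\ell 2)$), respectively its range vertices $r(e_1),\dots,r(e_n)$ (in the case $(r2)$), are pairwise distinct; in particular the cycle edges are then automatically distinct. In the left case the only cycle edge sharing the source $w:=s(e_0)=s(e_1)$ with $e_0$ is $e_1$ itself, while $e_k$ for $k\geq 2$ has $s(e_k)\neq w$ and hence $e_k\neq e_0$; combined with the hypothesis $e_0\neq e_1$, this shows $e_0$ differs from every $e_k$. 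The right case is identical after exchanging the roles of source and range and invoking $r(e_0)=r(e_1)$ in place of $s(e_0)=s(e_1)$. I would record explicitly that passing to a simple cycle loses nothing here, since the hypotheses are only used through the regularity of the cycle's vertices (for $(\ell 2)$) and through $E^0_\textnormal{reg}$ being finite (for $(r2)$), both of which are inherited by any subcycle; this reduction is the only place where a little thought is needed, the remaining verifications being routine.
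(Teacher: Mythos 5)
Your proof takes the same direct-verification route as the paper's: the paper simply chooses the two loops as $e_0,e_1$ for $(\ell 1)$ and $(r1)$, notes $(r0)$ is obvious, and for $(\ell 2)$ and $(r2)$ "chooses the indicated edges," observing that the finiteness clause of Definition~\ref{rightadh}(iii) is automatic when $E^0_\textnormal{reg}$ is finite. Your easy cases agree with this verbatim, and you are in fact more careful than the paper on the one delicate point, the requirement that $e_0,e_1,\dots,e_n$ be \emph{distinct}: since the paper's notion of a cycle is merely a closed walk, distinctness is genuinely not automatic from the hypotheses of $(\ell 2)$ and $(r2)$, which only guarantee $e_0\neq e_1$, and the paper's proof passes over this in silence.

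However, the justification you record for your reduction step is not correct as stated. You claim that passing to a simple cycle loses nothing because the hypotheses "are only used through the regularity of the cycle's vertices (for $(\ell 2)$) and through $E^0_\textnormal{reg}$ being finite (for $(r2)$), both of which are inherited by any subcycle." But the hypotheses concerning $e_0$ are also used: in the left case you need the vertex $s(e_0)=s(e_1)$ to lie on the cycle so that Definition~\ref{leftadh}(ii) holds for $k=0$, and in the right case you need $r(e_0)=r(e_1)$ to lie on the cycle for Definition~\ref{rightadh}(ii). An arbitrary simple subcycle of the given closed walk may avoid the vertex $s(e_1)$ (resp.\ $r(e_1)$) entirely; then condition (ii) fails for $e_0$, and your distinctness argument, which compares $e_0$ against the cycle edge $e_1$, has nothing to compare against. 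What your argument actually requires is a simple subcycle that still contains $e_1$ as an edge, which is what your distinctness paragraph tacitly assumes. Such a subcycle always exists, but by a slightly more careful extraction: whenever you excise the segment between two occurrences of a repeated vertex, keep the initial portion containing $e_1$ (and if the base $s(e_1)$ itself recurs at position $j<n$, keep $e_1\cdots e_j$ and discard the rest). With this one-line repair your argument is complete, and indeed fills a hole that the paper's own proof leaves open.
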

\begin{proof}
In $(\ell1)$ and $(r1)$, we choose at each $v_0\in E_\textnormal{reg}^0$ the two loops as our $e_0,e_1$. Claim $(r0)$ is obvious, and for $(\ell2)$ and $(r2)$ we choose the indicated sets of edges, noting in the latter case that the finiteness condition is automatically true.
\end{proof}

\begin{proposition} \label{six-term-graph-splice}
Let $\mathcal{E}:$ 
\begin{equation} \label{given-exact-seq-again}
\xymatrix{G_1 \ar[r]^-{\epsilon} & G_2 \ar[r]^-{\gamma} & G_3 \ar[d]^-{0} \\
F_3 \ar[u]^-{\delta} & F_2 \ar[l]_-{\gamma'} & F_1 \ar[l]_-{\epsilon'} 
}
\end{equation}
be an exact sequence of abelian groups 
with $F_1$, $F_2$, and $F_3$. Let $E_1 = (E_1^0, E_1^1, r_{E_1}, s_{E_1})$ and $E_3 = (E_3^0, E_3^1, r_{E_3}, s_{E_3})$ be graphs such that 
\[
\alpha_i:K_0(C^*(E_i))\cong G_i\text{ and }\beta_i:K_1(C^*(E_i))\cong F_i
\]
are given for $i = 1,3$. 

When $E_1$ is left adhesive or $E_3$ is right adhesive, there exists a graph $E_2=(E_2^0, E_2^1, r_{E_2}, s_{E_2})$ with the properties
\begin{enumerate}[(1)]
\item $E_2^0 = E^0_1 \sqcup E_3^0$;
\item $E_2^1$ is equal to the disjoint union of $E_1^1$ and $E_3^1$ together with 
\begin{enumerate}[(a)]
\item a finite nonzero number of edges from each  $v\in(E_3^0)_\textnormal{reg}$ to some vertices in $E_1^0$
\item an infinite number of edges from each  $v\in(E_3^0)_\textnormal{sing}$ to each vertex in $E_1^0$
\end{enumerate}
\end{enumerate}
so that with $\I$ the ideal of $C^*(E_2)$ given by $E_1$, $\I$ is essential, and there exist $\alpha_2:K_0(C^*(E_2))\cong G_2$ and $\beta_2:K_1(C^*(E_2))\cong F_2$ so that $\Ksix(C^*(E_2) ,\I)$ is isomorphic to $\mathcal E$ via the maps $\alpha_i$ and $\beta_i$, $i=1,2,3$.
\end{proposition}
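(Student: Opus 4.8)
\section*{Proof proposal}

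The plan is to realize the block matrix $\left(\begin{smallmatrix} A & Y \\ 0 & B \end{smallmatrix}\right)$ of Proposition~\ref{piece-together-prop} as $R_{E_2}-I$ for the graph $E_2$ we seek, and then to read off the conclusion from Proposition~\ref{Ksix-from-snake-prop}. First I would set $A := R_{E_1}-I$ and $B := R_{E_3}-I$, so that Proposition~\ref{K-group-computation-prop} together with the given maps supplies isomorphisms $\coker A \cong G_1$, $\ker A \cong F_1$, $\coker B \cong G_3$, and $\ker B \cong F_3$ (obtained by composing $\alpha_i,\beta_i$ with the canonical identifications $K_0(C^*(E_i)) \cong \coker(R_{E_i}-I)$ and $K_1(C^*(E_i)) \cong \ker(R_{E_i}-I)$). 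Since $F_1,F_3$ are kernels of integer matrices and $F_2$ is free by assumption on $\mathcal{E}$, the hypotheses of Proposition~\ref{piece-together-prop} are met, and it produces a matrix $Y$ together with isomorphisms $\alpha_2,\beta_2$ exhibiting the snake sequence \eqref{snake-maps-again} of $\left(\begin{smallmatrix} A & Y \\ 0 & B \end{smallmatrix}\right)$ as isomorphic to $\mathcal{E}$.

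The two further requirements on $Y$ are that it be non-negative (so its entries can count edges) and that every column be nonzero (so that each regular vertex of $E_3$ emits at least one edge into $E_1$, as demanded by (2)(a) and, as below, for essentiality). This is where adhesiveness enters: by Lemma~\ref{adhrelevance}, if $E_1$ is left adhesive then $A$ satisfies the conditions of Lemma~\ref{aver}, while if $E_3$ is right adhesive then $B$ satisfies those of Lemma~\ref{bver}. In either case Proposition~\ref{positive-Y-prop} lets me replace $Y$ by a matrix dominating any prescribed $Z \in M_{n_1,n_3'}(\Z)$ without disturbing the isomorphism. Assuming $E_1^0 \neq \emptyset$ (the empty case being degenerate, as then $G_1=F_1=0$), I would fix $v_0 \in E_1^0$ and take $Z$ to have $v_0$-row identically $1$ and all other entries $0$; this $Z$ is column-finite, and the resulting $Y \geq Z \geq 0$ is non-negative with a positive entry in row $v_0$ of every column.

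With this $Y$ I would define $E_2$ by $E_2^0 = E_1^0 \sqcup E_3^0$, keeping all edges of $E_1$ and $E_3$, drawing $Y(v,w)$ edges from each $w \in (E_3^0)_\textnormal{reg}$ to each $v \in E_1^0$, and drawing infinitely many edges from each $w \in (E_3^0)_\textnormal{sing}$ to each $v \in E_1^0$. Since new outgoing edges are attached only at vertices of $E_3^0$, the regular/singular dichotomy on $E_1^0$ is untouched, every regular vertex of $E_3$ stays regular and every singular one stays singular; hence $(E_2^0)_\textnormal{reg} = (E_1^0)_\textnormal{reg} \sqcup (E_3^0)_\textnormal{reg}$ and $R_{E_2} = \left(\begin{smallmatrix} R_{E_1} & Y \\ 0 & R_{E_3}\end{smallmatrix}\right)$, so that $R_{E_2}-I = \left(\begin{smallmatrix} A & Y \\ 0 & B \end{smallmatrix}\right)$. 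One then checks that $H := E_1^0$ is saturated and hereditary with no breaking vertices and that the two subgraphs of \eqref{two-subgraphs} are precisely $E_1$ and $E_3$; Proposition~\ref{Ksix-from-snake-prop} identifies $\Ksix(C^*(E_2),\I)$ with the snake sequence of $\left(\begin{smallmatrix} A & Y \\ 0 & B \end{smallmatrix}\right)$, and composing with the isomorphism from Proposition~\ref{piece-together-prop} yields $\Ksix(C^*(E_2),\I) \cong \mathcal{E}$ via the $\alpha_i,\beta_i$. Essentiality of $\I$ follows because every vertex of $E_2$ connects to $H$: those in $E_1^0$ trivially, and every vertex of $E_3^0$ because it emits an edge directly into $E_1^0$ (regular vertices by the nonzero-column property, singular vertices by the infinitely many edges). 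Thus $E_3^0$ contains no nonempty hereditary subset disjoint from $H$, and as the annihilator of the gauge-invariant ideal $\I$ is again gauge-invariant, it must vanish.

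The main obstacle is the simultaneous control required of $Y$. Proposition~\ref{piece-together-prop} pins down the algebra but gives no sign or support information, so everything hinges on feeding the adhesiveness hypothesis into the positivity machinery of Lemmas~\ref{aver}--\ref{bver} and Proposition~\ref{positive-Y-prop} to upgrade $Y$ to a non-negative matrix with nonzero columns while preserving the realized six-term sequence. Verifying that this single adjustment delivers both the edge-counting interpretation of the added edges and the essentiality of $\I$ is the crux; the remaining graph-theoretic bookkeeping (regularity, saturation, absence of breaking vertices, identification of the subgraphs) is routine.
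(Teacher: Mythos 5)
Your proposal is correct and follows essentially the same route as the paper's own proof: the same matrices $A=R_{E_1}-I$ and $B=R_{E_3}-I$, the same appeal to Lemma~\ref{adhrelevance} and Proposition~\ref{positive-Y-prop} with the same choice of $Z$ (a single row of $1$'s), the same graph construction, and the same gauge-invariance argument for essentiality of $\I$. The only differences are cosmetic --- you spell out the identification via Proposition~\ref{Ksix-from-snake-prop}, the freeness of $F_1$ and $F_3$, and the annihilator reduction that the paper leaves implicit.
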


\begin{proof}
Let $R_{E_1}$ and $R_{E_3}$ denote the vertex matrices of $E_1$ and $E_3$, respectively, and set $A=R_{E_1}-I$ and $B=R_{E_3}-I$. By Lemma \ref{adhrelevance}, either $A$ satisfies the conditions of Lemma \ref{aver} or $B$ those of Lemma \ref{bver}, so we may apply Proposition \ref{positive-Y-prop} to find $Y$ such that $Y\geq Z$ where we set 
\[
Z=
\begin{pmatrix}
1 &1&\cdots\\
0 &0&\cdots\\
\vdots&&\\
\end{pmatrix}.
\]
Thus we obtain that $Y$ is nonnegative, with a positive entry in each column. Using $Y$ to read off how many edges to add, and adding an infinite number of edges from every $v\in (E_3)_\textnormal{sing}^0$ to every $w\in E_1^0$ we create $E_3$ with regular vertex matrix $R_{E_3}$ so that $R_{E_3}-I$ takes the form $\left(\begin{smallmatrix}A&Y\\0&B\end{smallmatrix}\right)$. Note that we have arranged that $E_1^0$ is saturated and hereditary in $E_2$, and that 
\[
 (E_2^0)_\textnormal{reg}\cap E_3^0= (E_3^0)_\textnormal{reg}.
\]
Hence $E_1^0$ defines a gauge-invariant ideal $\I$. To show that $\I$ is essential, it suffices to prove that $\I$ nontrivially intersects every nonzero gauge-invariant ideal. Let such  an ideal be given by a hereditary and saturated set $H$ along with a set of breaking vertices $B$.  It then suffices to prove that $H\cap E_1^0\not=\emptyset$. This follows by noting that if $H\subseteq E_ 3^0$, then some $v\in H\cap E_3^0$ may be chosen, and since this vertex has at least one edge to $E_1^0$, we have found the desired contradiction.
\end{proof}

\begin{proposition} \label{six-term-graph-splice-stenotic}
In the situation of Proposition \ref{six-term-graph-splice}, if 
one of the following conditions holds:
\begin{enumerate}[(i)]
\item
$E_1^0$ is the smallest hereditary and saturated subset of itself containing $v_1,\dots,v_n$ for some finite choice of $v_i$
\item
$E_3$ is transitive, and  either
\begin{enumerate}[(1)]
\item $(E^0_3)_\textnormal{sing}\not=\emptyset$; or
\item $|E^0_3|=\infty$
\end{enumerate}
\end{enumerate}
then  $\I$ may be chosen stenotic.
\end{proposition}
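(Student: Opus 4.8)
The plan is to verify the defining property of stenosis directly: for an arbitrary ideal $\J$ of $C^*(E_2)$, I show that either $\J\subseteq\I$ or $\I\subseteq\J$. Since $\I=\I_{E_1^0}$ is the gauge-invariant ideal with saturated hereditary set $E_1^0$, the natural thing is to compare $E_1^0$ with the saturated hereditary set $H_\J=\{v\in E_2^0 : p_v\in\J\}$ attached to $\J$, using that $\I_{H_\J}\subseteq\J$ always holds. The dichotomy I aim for is: if $E_1^0\subseteq H_\J$ then $\I=\I_{E_1^0}\subseteq\I_{H_\J}\subseteq\J$, while if $E_1^0\not\subseteq H_\J$ then $\J\subseteq\I$. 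Thus everything reduces to the combinatorial claim that $H_\J\cap E_3^0\neq\emptyset$ forces $E_1^0\subseteq H_\J$, together with an analysis of how $\J$ can fail to be gauge-invariant.

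First I fix the gluing data by applying Proposition~\ref{positive-Y-prop} with a matrix $Z$ tailored to the case at hand: in case~(i) I take $Z$ with a $1$ in every entry of the rows indexed by $v_1,\dots,v_n$, so that each regular vertex of $E_3$ emits an edge to each $v_i$; in case~(ii)(2) I use that $E_3$ is infinite to choose $Z$ with a nonzero entry in every row, so that each vertex of $E_1^0$ is the range of a gluing edge; in case~(ii)(1) the singular vertex of $E_3$ already emits edges to every vertex of $E_1^0$, so no adjustment is needed. With $Y\geq Z$ in hand I prove the combinatorial claim. Suppose $H_\J\cap E_3^0\neq\emptyset$ and pick $v\in H_\J\cap E_3^0$. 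In case~(ii), transitivity of $E_3$ makes every nonempty hereditary subset of $E_3^0$ equal to $E_3^0$, so $E_3^0\subseteq H_\J$; then either the singular vertex (case~(1)) or the collective coverage by the regular vertices (case~(2)) shows by heredity that $E_1^0\subseteq H_\J$. In case~(i), if $v$ is regular it emits edges to $v_1,\dots,v_n$, placing them in $H_\J$, and since $H_\J\cap E_1^0$ is saturated hereditary in $E_1$ (an $E_1$-vertex has the same out-edges in $E_1$ and in $E_2$), condition~(i) forces $H_\J\cap E_1^0=E_1^0$; if $v$ is singular it emits edges to all of $E_1^0$ and the conclusion is immediate.

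It remains to handle $E_1^0\not\subseteq H_\J$, where the claim yields $H_\J\cap E_3^0=\emptyset$, i.e. $H_\J\subseteq E_1^0$; here I must show $\J\subseteq\I$, equivalently $\pi(\J)=0$ for the quotient map $\pi\colon C^*(E_2)\to C^*(E_2)/\I\cong C^*(E_3)$, which annihilates exactly the projections $p_v$ with $v\in E_1^0$. Since $H_\J\subseteq E_1^0$, the gauge-invariant ideal determined by $H_\J$ (including its breaking-vertex data, which also lies over $E_1^0$) is contained in $\I$, so a nonzero image $\pi(\J)$ can only come from the failure of $\J$ to be gauge-invariant, that is, from a cycle without an exit in the quotient graph $E_2\setminus H_\J$. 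Every cycle of $E_2$ lies entirely in $E_1$ or entirely in $E_3$, as there are no edges from $E_1$ to $E_3$; a cycle in $E_1$ maps to $0$ under $\pi$ and hence cannot contribute to $\pi(\J)$. The crux is to rule out an exitless (in the quotient) cycle $C\subseteq E_3$: because $E_1^0\not\subseteq H_\J$ there is some $w\in E_1^0\setminus H_\J$, and the gluing edges guarantee that each vertex of $C$ emits an edge whose range avoids $H_\J$ --- to such a $w$ directly in cases~(i) and~(ii)(1), or to an $E_3$-vertex outside $C$ by transitivity in case~(ii)(2) --- which is an exit of $C$ in $E_2\setminus H_\J$, a contradiction. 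Hence $\pi(\J)=0$ and $\J\subseteq\I$.

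I expect the main obstacle to be precisely this last point: the non-gauge-invariant ideals are governed by exitless cycles in the various quotient graphs rather than by saturated hereditary sets alone, so they are not visible from $H_\J$ directly. The resolution is the observation that the gluing edges supplied by Proposition~\ref{six-term-graph-splice}, combined with the hypothesis $E_1^0\not\subseteq H_\J$, always furnish such a cycle with an exit in the quotient; carrying this out uniformly across cases~(i), (ii)(1) and~(ii)(2), and checking that the coverage arranged through $Z$ is preserved by Proposition~\ref{positive-Y-prop}, is where the care lies. Once comparability with every ideal is established, stenosis of $\I$ follows by definition, while $\I$ remains essential by Proposition~\ref{six-term-graph-splice}.
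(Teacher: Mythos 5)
Your proof has the same combinatorial core as the paper's: the same case-by-case choices of $Z$ (rows $v_1,\dots,v_n$ filled with ones in case (i), coverage of every $E_1$-vertex in case (ii)(2), nothing extra in case (ii)(1)), the same use of Proposition~\ref{positive-Y-prop} to force $Y\geq Z$, and the same heredity/saturation/transitivity argument yielding the dichotomy ``$H_\J$ meets $E_3^0$ $\Rightarrow$ $E_1^0\subseteq H_\J$''. Where you genuinely diverge is in the treatment of ideals that are not gauge-invariant. The paper disposes of these with the phrase ``we may assume [$\J$] is gauge invariant''; unlike the analogous reduction for essentiality (which is formal, since the annihilator of a gauge-invariant ideal is gauge-invariant), this reduction is not automatic for stenosis: knowing that $\I$ is comparable with the gauge-invariant part $\I_{(H_\J,B_\J)}$ of $\J$ does not by itself yield $\J\subseteq\I$ when $\I_{(H_\J,B_\J)}\subseteq\I$. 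You supply exactly the missing argument: when $H_\J\subsetneq E_1^0$ you show every exitless cycle of the quotient graph lies on the $E_1$ side, and then conclude $\pi(\J)=0$ from the structure of ideals containing no vertex projections. This makes your write-up more complete than the paper's own proof, at the price of invoking a nontrivial structure theorem (an ideal whose image in the quotient by its gauge-invariant part contains no vertex projections is supported on exitless cycles, cf.\ Hong--Szyma\'nski and \cite{BHRS}); you should cite that statement precisely rather than gesture at it, since the whole weight of your final step rests on it.

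One step as written is incorrect, though easily repaired with a tool you already use. In case (ii)(1) you claim that each vertex of an exitless cycle $C\subseteq E_3$ ``emits an edge whose range avoids $H_\J$ \ldots to such a $w$ directly''. That holds for singular vertices of $E_3$, but a regular vertex of $C$ only emits gluing edges to unspecified vertices of $E_1$ (you imposed no condition on $Z$ in case (ii)(1)), and those ranges may all lie in $H_\J$. The fix is the transitivity argument you reserve for (ii)(2): since $H_\J\subseteq E_1^0$, every $E_3$-edge survives in the quotient graph, so if $C$ had no exit inside $E_3$ then each of its vertices would emit exactly one edge in $E_3$ and transitivity would force $E_3^0=C^0$ with every vertex of $E_3$ regular, contradicting $(E_3^0)_\textnormal{sing}\neq\emptyset$. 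A parallel small point in case (ii)(2): a column-finite $Z$ with a nonzero entry in every row need not exist when $(E_3^0)_\textnormal{reg}$ is finite and $E_1^0$ is infinite; there the coverage of $E_1^0$ is instead provided by the (then infinitely many) singular vertices of $E_3$, exactly as in the paper's choice $Z=I$. With these patches your proof is correct.
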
 
\begin{proof} 
For (i), arrange the vertices of $E_1$ such that $v_1,\dots,v_n$ are listed first. Choosing $Y$ dominating 
\[
Z=
\begin{pmatrix}
1 &1&1&\cdots\\
\vdots&&&\\
1&1&1&\cdots\\
0&0&0&\cdots\\
\vdots&&&&
\end{pmatrix}
\]
in the previous proof we may arrange that there is an edge from each vertex in $E_3$ to each hereditary and saturated subset in $E_1$. Let $\J$ be an ideal of $C^*(E_2)$ which, as above, we may assume is gauge invariant and hence given by $(H,B)$. Since no vertex in $E_3$ is breaking for any subset of $E_1$, we see that if $\J\not\subseteq \I$, $H$ must intersect $E_3^0$. But then, by our construction and the condition that $H$ is hereditary, we see that $E_1^0\subseteq H$, and hence that  $\I \subseteq \J$.

In the case (ii)(2) we choose instead $Y$ dominating 
$Z=I$
and for (ii)(1) recall that every singular vertex of $E_3$ emits, by our construction, an edge to any vertex of $E_1$. Now when $\J\not\subseteq \I$ and $(H,B)$ are given as above, we again get that $H\cap E_3^0\not=\emptyset$, which implies by transitivity that $E_3^0\subseteq H$. Then if $v\in E_3^0$ is singular, since it emits an edge to each vertex in $E_1$, we get that $H=E_2^0$ since $H$ is hereditary. Similarly, if $v_1,v_2,\dots$ are regular in $E_3^0$, we have that $\{v_1,\dots,v_n\}$ emits to the first $n$ vertices of $E_1$ so that again $H=E_2^0$.
\end{proof}

\subsection{Order obstructions}

\begin{proposition}\label{inx-case}
Let $\A$ be a $C^*$-algebra with real rank zero, and let $\I$ be an ideal that is simple and purely infinite. If $\pi:\A\to \A/I$ denotes the quotient map, we have
\[
K_0(\A)^+=\{x\in K_0(\A)\mid \pi_*(x)\geq 0\}
\] 
\end{proposition}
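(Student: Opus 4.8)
The plan is to prove the nontrivial containment $\supseteq$ by writing any $x$ with $\pi_*(x)\geq 0$ as a sum of two manifestly positive classes: I will peel off a lifted projection using the real rank zero hypothesis, and absorb the remaining kernel term using pure infiniteness. The reverse containment $\subseteq$ needs no hypotheses at all: $\pi$ is a $*$-homomorphism, so $\pi_*$ carries classes of projections to classes of projections and is therefore an order homomorphism, which gives $\pi_*(x)\geq 0$ whenever $x\geq 0$.

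For $\supseteq$, I would fix $x\in K_0(\A)$ with $\pi_*(x)\geq 0$ and use the definition of the positive cone to write $\pi_*(x)=[\bar e]_0$ for some projection $\bar e\in M_n(\A/\I)$. The crucial move, and the only place the hypothesis on $\A$ is used, is to lift $\bar e$ to an honest projection $e\in M_n(\A)$. Since $\A$ has real rank zero, so does $M_n(\A)$, and projections lift along the quotient map $M_n(\A)\to M_n(\A)/M_n(\I)\cong M_n(\A/\I)$; this is the projection lifting property of real rank zero algebras. Choosing such an $e$ with $\pi(e)=\bar e$ yields $\pi_*([e]_0)=[\bar e]_0=\pi_*(x)$.

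Then $x-[e]_0\in\ker\pi_*$, and by exactness of the six-term sequence \eqref{Ksix} we have $\ker\pi_*=\im\iota_*$, so $x-[e]_0=\iota_*(y)$ for some $y\in K_0(\I)$. Here the second hypothesis enters: because $\I$ is simple and purely infinite, $K_0(\I)$ is trivially pre-ordered, so $y\in K_0(\I)=K_0(\I)^+$, and since $\iota_*$ is an order homomorphism we get $\iota_*(y)\in K_0(\A)^+$. Adding, $x=[e]_0+\iota_*(y)$ lies in $K_0(\A)^++K_0(\A)^+\subseteq K_0(\A)^+$, which is the desired conclusion.

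The hard part is precisely the projection lifting step; everything else is bookkeeping with exactness and with the order homomorphisms $\pi_*$ and $\iota_*$. If a self-contained treatment is preferred to a citation, I would lift $\bar e$ to a self-adjoint $a\in M_n(\A)$ with $0\leq a\leq 1$, observe that $a-a^2$ lies in $M_n(\I)$ and is nonnegative, and then exploit that the ideal $M_n(\I)$ again has real rank zero to perturb $a$ by an element of $M_n(\I)$ into a projection; this is the standard rounding argument and is where the whole weight of the hypothesis rests.
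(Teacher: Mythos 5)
Your proof is correct and follows essentially the same route as the paper's: peel off a positive class $x'=[e]_0$ with $\pi_*(x')=\pi_*(x)$ via projection lifting from real rank zero, then use exactness and the trivial pre-order on $K_0(\I)$ (from $\I$ being simple and purely infinite) to see the remainder is positive. You merely spell out explicitly the projection-lifting and exactness steps that the paper compresses into one line.
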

\begin{proof}
  One inclusion is clear since $\pi_*$ is positive, so let $x\in K_0(\A)$ be given with $\pi_*(x)\geq 0$. Since $\A$ has real rank zero, we may choose $x'\in K_0(\A)^+$ so that $\pi_*(x)=\pi_*(x')$, and since $\I$ is simple and purely infinite, we have that $x-x'\geq 0$ in $K_0(\I)$, and hence in $K_0(\A)$. We conclude that $x=(x-x')+x'\geq 0$.  
\end{proof}

\begin{proposition}
Let a $C^*$-algebra $\A$ be given with an approximate unit of projections, and with $\I$ a largest ideal.  When there exists a projection $p\in \A$ such that 
\begin{enumerate}[(i)]
\item $p\not\in \I$
\item $[p]=0$ in $K_0(\A)$
\end{enumerate}
then $K_0(\A)^+=K_0(\A)$.
\end{proposition}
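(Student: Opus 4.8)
The plan is to reduce the whole statement to the single assertion that $-[r]_0 \in K_0(\A)^+$ for every projection $r \in M_\infty(\A)$. Since $\A$ has an approximate unit of projections we are given that $K_0(\A) = K_0(\A)^+ - K_0(\A)^+$, so once we know $-K_0(\A)^+ \subseteq K_0(\A)^+$ we may conclude $K_0(\A) = K_0(\A)^+ - K_0(\A)^+ \subseteq K_0(\A)^+$, which is exactly what we want. Everything therefore hinges on showing that each negative generator $-[r]_0$ is positive.

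The decisive consequence of $\I$ being a \emph{largest} ideal is that $p$ must be \emph{full}. First I would record that the only ideal of $\A$ not contained in $\I$ is $\A$ itself: if $\J \not\subseteq \I$ then $\J$ fails to be a nontrivial ideal, and since $p \ne 0$ forces the relevant ideal to be nonzero, the only option is $\J = \A$. Applying this to the closed two-sided ideal $\overline{\A p \A}$, which contains $p \notin \I$ and is nonzero, yields $\overline{\A p \A} = \A$; that is, $p$ is full in $\A$.

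Next I would invoke the standard comparison property of full projections: if $p$ is full, then every projection $r \in M_\infty(\A)$ satisfies $r \precsim p^{\oplus k}$ for some $k \ge 1$. Concretely, after amplifying $p$ to a corner of a matrix algebra it generates $M_m(\A)$ as an ideal, so $r$ lies in the closure of that ideal and there are finitely many $a_i$ (in a matrix algebra over $\A$) with $\|r - \sum_i a_i^* p a_i\| < 1$; cutting down by $r$ makes $r\big(\sum_i a_i^* p a_i\big)r$ invertible in the corner $r M_m(\A) r$, and a routine polar-type manipulation then produces a partial isometry $V$ with $V^*V = r$ and $VV^* \le p^{\oplus k}$, giving $r \sim VV^* \precsim p^{\oplus k}$. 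I expect this fullness-to-comparison step to be the technical heart of the argument, even though it is entirely elementary.

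Finally I would exploit the hypothesis $[p]_0 = 0$. In the monoid $V(\A)$ this means there is a projection $f \in M_\infty(\A)$ with $f \oplus p \sim f$, and iterating gives $f \oplus p^{\oplus k} \sim f$ for every $k$. Given an arbitrary projection $r$, choose $k$ with $r \precsim p^{\oplus k}$; then $r \oplus f \precsim p^{\oplus k} \oplus f \sim f$, so $r \oplus f$ is equivalent to a subprojection of $f$. Passing to $K_0$ this reads $[r]_0 + [f]_0 \le [f]_0$, whence $-[r]_0 \in K_0(\A)^+$, completing the reduction above. Apart from the comparison step, the remainder is bookkeeping in $V(\A)$ together with the observation that isolates the role of the largest ideal in forcing $p$ to be full.
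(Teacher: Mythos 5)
Your proposal is correct and follows essentially the same route as the paper's proof: reduce via $K_0(\A)=K_0(\A)^+-K_0(\A)^+$ to showing $-K_0(\A)^+\subseteq K_0(\A)^+$, observe that $p$ is full because it lies outside the largest ideal, dominate an arbitrary projection by $p^{\oplus n}$ using fullness, and then exploit $[p]_0=0$. The only difference is cosmetic: the paper concludes directly with $-[q]_0=n[p]_0-[q]_0=[p^{\oplus n}-r]_0\geq 0$, while you route the same cancellation through the monoid $V(\A)$ with a witness projection $f$ satisfying $f\oplus p\sim f$.
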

\begin{proof}
We have noted that $K_0(\A)=K_0(\A)^+-K_0(\A)^+$ whenever $\A$ has an approximate unit of projections, so it suffices to prove that $-K_0^+(\A)\subseteq K_0(\A)$.
Note that $p\in \A\subseteq \A\otimes\K$ is full since it is not an element of the largest ideal $\I$. Hence for any projection  $q\in A\otimes \K$ there exists $n$ with the property that $q$ is Murray-von Neumann equivalent to a subprojection $r$ of $p^{\oplus n}$, and we get that
\[
-[q]_0=n[p]_0-[q]_0=[p^{\oplus n}-r]_0\geq 0
\]
in $K_0(\A)$. 
\end{proof}

\begin{proposition}
Let $C^*(E)$ be a graph $C^*$-algebra with a gauge-invariant ideal $\I$ such that $C^*(E)/\I$ is purely infinite and simple. Then there exists a projection $p\in C^*(E)$ such that $p\not\in \I$ and such that $[p]_0=0$ in $K_0(\A)$.
\end{proposition}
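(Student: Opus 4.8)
The plan is to exhibit the desired projection explicitly as a difference $p = p_{w_0} - s_\mu s_\mu^*$ attached to a cycle $\mu$ lying over the quotient, so that the vanishing of $[p]_0$ is automatic and the only real work is to guarantee $p \notin \I$.

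First I would record the structural input. Write $\I = \I_{(H,S)}$, where $H$ is a saturated hereditary subset of $E^0$ and $S$ is the set of breaking vertices determining the gauge-invariant ideal $\I$. The quotient $C^*(E)/\I$ is again a graph $C^*$-algebra $C^*(F)$, whose vertex set is $(E^0 \setminus H)$ together with one sink $v'$ for each breaking vertex not in $S$, and whose edges are the edges $e \in E^1$ with $r(e) \notin H$ (plus primed copies pointing into the sinks). Since this quotient is purely infinite and simple by hypothesis, the Dichotomy for simple graph $C^*$-algebras \cite{DT1} forces $F$ to contain a cycle, and simplicity forces condition (L), so every cycle in $F$ has an exit. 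Because the adjoined vertices $v'$ are sinks and $H$ is hereditary (so every edge with range outside $H$ also has source outside $H$), such a cycle is carried by a genuine cycle $\mu = e_1 e_2 \cdots e_k$ in $E$ all of whose vertices lie in $E^0 \setminus H$ and each of whose edges has range outside $H$. Put $w_0 = s(e_1) = r(e_k)$.

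Next I would form $s_\mu := s_{e_1} s_{e_2} \cdots s_{e_k} \in C^*(E)$. The Cuntz--Krieger relations give
\[
s_\mu^* s_\mu = p_{w_0}, \qquad s_\mu s_\mu^* \leq p_{w_0},
\]
so $s_\mu$ implements a Murray--von Neumann equivalence $p_{w_0} \sim s_\mu s_\mu^*$, and $p := p_{w_0} - s_\mu s_\mu^*$ is a projection in $C^*(E)$. Consequently
\[
[p]_0 = [p_{w_0}]_0 - [s_\mu s_\mu^*]_0 = [p_{w_0}]_0 - [p_{w_0}]_0 = 0,
\]
which is the first required property and comes for free. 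Finally I would verify $p \notin \I$, i.e.\ $\pi(p) \neq 0$ for the quotient map $\pi$. Since each $e_i$ is an edge of $E$ with range outside $H$ and $w_0 \notin H$, we have $\pi(p_{w_0}) = q_{w_0}$ and $\pi(s_\mu) = t_\mu$, the corresponding vertex projection and cycle partial isometry in $C^*(F)$, whence $\pi(p) = q_{w_0} - t_\mu t_\mu^*$. As $\mu$ has an exit in $F$, the standard fact that a cycle with an exit satisfies $t_\mu t_\mu^* \neq q_{w_0}$ yields $\pi(p) \neq 0$.

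I expect the only genuine obstacle to be the second paragraph: extracting from ``purely infinite and simple'' a cycle with an exit that is visible as an honest cycle inside $E$ avoiding $H$, and confirming that the breaking-vertex modifications of the quotient graph introduce only sinks and therefore no new cycles. Once that cycle is produced, both the triviality of $[p]_0$ and the nonvanishing of $\pi(p)$ drop out immediately, so the heart of the proof is entirely this structural extraction rather than any computation.
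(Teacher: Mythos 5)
Your proposal is correct, and its core is the same as the paper's: both build $p$ as a vertex projection minus the range projection of a cycle isometry lying over the quotient, so that $[p]_0=0$ comes for free, and the only work is showing $p\notin\I$. Where you diverge is precisely in that last verification. The paper stays inside $C^*(E)$: since $C^*(E)/\I$ is purely infinite and simple, the graph realizing the quotient has a vertex $v$ supporting two distinct cycles $\xi,\eta$; setting $p=p_v-s_\xi s_\xi^*$ one gets $p\geq s_\eta s_\eta^*$, and since $s_\eta s_\eta^*$ is Murray--von Neumann equivalent to $p_v\notin\I$ and ideals are hereditary, $p\notin\I$. You instead use a single cycle with an exit (condition (L), forced by simplicity) and push $p$ into the quotient graph algebra $C^*(F)$, invoking the standard fact that a cycle with an exit satisfies $t_\mu t_\mu^*\lneq q_{w_0}$. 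Both routes work, but note what each buys. The paper's two-cycle trick avoids all bookkeeping about the quotient isomorphism: your identities $\pi(p_{w_0})=q_{w_0}$ and $\pi(s_\mu)=t_\mu$ are literally valid only when no vertex of $\mu$ is a breaking vertex of $H$ lying outside $S$; if for instance $w_0$ is such a breaking vertex, then $\pi(p_{w_0})$ corresponds to $q_{w_0}+q_{w_0'}$ and the image of $s_\mu$ acquires an extra summand ending at the adjoined sink $w_0'$. This does not sink your argument --- in that case $\pi(p)\geq q_{w_0'}\neq 0$ anyway --- but it is a case your clean formulas silently exclude and should be flagged. Conversely, your route makes explicit the structural facts (dichotomy, condition (L), and that the sinks adjoined for breaking vertices carry no cycles) that the paper compresses into the rather loose phrase ``choose a subgraph $E_3\subseteq E$ so that $C^*(E)/\I\simeq C^*(E_3)$'', which is itself not literally a subgraph statement when breaking vertices are present; in that sense your write-up is more careful about the hypotheses even though it needs one extra patch at the end.
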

\begin{proof}
We may choose a subgraph $E_3\subseteq E$ so that $C^*(E)/I\simeq C^*(E_3)$, and in the subgraph $E_3$ there exists a vertex $v\in E_3^0$ supporting two different cycles $\xi,\eta\in E_3^*$. Let $p=p_v-s_\xi s_\xi^*\in C^*(E)$.  Then $[p]_0=0$. But since $p\geq s_\eta s_\eta^*$ we have that $p\not \in \I$.
\end{proof}

\begin{corollary}\label{graph-ext-order}
Let $E$ be a graph, let $\I$ be an ideal of $C^*(E)$, and let $\pi : C^*(E) \to C^*(E) / \I$ be the quotient map.  Then the following two statements hold.
\begin{enumerate}[(1)]
\item If $C^*(E)$ has real rank zero and $\I$ is purely infinite and simple, then
\[
K_0(C^*(E))^+=\{x\in K_0(C^*(E))\mid \pi_*(x)\geq 0\}.
\] 
\item If $\I$ is the largest ideal of $C^*(E)$ and $C^*(E)/\I$ is purely infinite and simple, then
\[
K_0(C^*(E))^+=K_0(C^*(E)).
\] 
\end{enumerate}
\end{corollary}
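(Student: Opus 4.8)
The plan is to obtain both statements as essentially immediate consequences of the three propositions immediately preceding this corollary, so that almost no new work is required.

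For statement (1) I would simply invoke Proposition~\ref{inx-case} with $\A = C^*(E)$. The hypotheses of that proposition---that $\A$ has real rank zero and that $\I$ is a simple, purely infinite ideal---are precisely the assumptions of (1), and its conclusion is verbatim the claimed description of $K_0(C^*(E))^+$. Thus statement (1) follows at once, with nothing further to check.

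For statement (2) I would chain the last two propositions together. The proposition producing a projection $p \notin \I$ with $[p]_0 = 0$ is stated only for a \emph{gauge-invariant} ideal, so the one genuinely new observation I need is that a largest ideal of a graph $C^*$-algebra is automatically gauge-invariant. This I would argue from uniqueness: a largest ideal is the unique nontrivial ideal containing every nontrivial ideal, and each gauge automorphism $\gamma_z$ induces an inclusion-preserving bijection of the ideal lattice of $C^*(E)$; hence $\gamma_z(\I)$ is again a largest ideal, and uniqueness forces $\gamma_z(\I) = \I$ for every $z \in \T$. With gauge-invariance in hand, the proposition on graph algebras with purely infinite simple quotient supplies a projection $p \in C^*(E)$ satisfying $p \notin \I$ and $[p]_0 = 0$ in $K_0(C^*(E))$.

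Finally I would feed this projection into the remaining proposition, whose conclusion is exactly $K_0(\A)^+ = K_0(\A)$. Its three hypotheses are all available: $C^*(E)$ has an approximate unit of projections (as recalled in the preliminaries, this holds for every graph $C^*$-algebra), $\I$ is a largest ideal by assumption in (2), and the required projection $p$ has just been produced. This yields $K_0(C^*(E))^+ = K_0(C^*(E))$ and completes statement (2). I expect the only point requiring any care to be the gauge-invariance of the largest ideal, since the projection-producing proposition is phrased only for gauge-invariant ideals; every other ingredient is a direct application of a previously established result.
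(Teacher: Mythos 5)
Your proof is correct and follows exactly the route the paper intends: the corollary carries no written proof precisely because part (1) is Proposition~\ref{inx-case} applied verbatim to $\A = C^*(E)$, and part (2) chains the two subsequent propositions (the projection-producing one and the trivial-order one, the latter's approximate-unit hypothesis being satisfied by every graph $C^*$-algebra). Your observation that a largest ideal is automatically gauge-invariant---since each gauge automorphism permutes the ideal lattice and uniqueness of the largest ideal then forces $\gamma_z(\I) = \I$---correctly supplies the one implicit step needed to invoke the proposition that is stated only for gauge-invariant ideals.
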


\section{Six-term exact sequences realized by graph $C^*$-algebras} \label{ranges-sec}

In this section we consider the range of the invariant $\Ksix (\A,\I)$ for various classes of graph $C^*$-algebras that have $\Ksix (\A,\I)$ as a complete stable isomorphism invariant.

It was proven in \cite[Theorem~4.7]{ET1} that the six-term exact sequence $\Ksix(C^*(E),\I_\textnormal{max})$ is a complete stable isomorphism invariant when $C^*(E)$ is a graph $C^*$-algebra, $\I_\textnormal{max}$ is a largest ideal in $C^*(E)$, and $\I_\textnormal{max}$ is an AF-algebra.  It was also proven in \cite[Corollary~6.4]{ERR} that the six-term exact sequence $\Ksix(C^*(E),\I_\textnormal{min})$ is a complete stable isomorphism invariant when $C^*(E)$ is a graph $C^*$-algebra, $\I_\textnormal{min}$ is a smallest nontrivial ideal in $C^*(E)$, and $C^*(E)/\I_\textnormal{min}$ is an AF-algebra.  In the first case, $C^*(E)/\I_\textnormal{max}$ is simple and hence either purely infinite or AF.  If $C^*(E)/\I_\textnormal{min}$ is AF, then $C^*(E)$ is an AF-algebra and the ordered group $K_0(C^*(E))$ is a complete stable isomorphism invariant.  Thus the case that we are concerned with the six-term exact sequence is when $C^*(E)/\I_\textnormal{max}$ is purely infinite.  Likewise, $\I_\textnormal{min}$ is simple and hence either purely infinite or AF.  When $\I_\textnormal{min}$ is AF, then $C^*(E)$ is AF and $K_0(C^*(E))$ is again a complete stable isomorphism invariant.  Thus the case that we are concerned with is when $\I_\textnormal{min}$ is purely infinite.

\begin{theorem} \label{largest-ideal-range-thm}
Let $C^*(E)$ be a graph $C^*$-algebra with a largest nontrivial ideal $\I$ such that $\I$ is an AF-algebra and $C^*(E)/\I$ is purely infinite.  Then $\Ksix (C^*(E),\I)$ is a complete stable isomorphism invariant within this class, and the range of this invariant is all six-term exact sequences of countable abelian groups
$$\xymatrix{
R_1 \ar[r]^-{\epsilon} & G_2 \ar[r]^-{\gamma} & G_3 \ar[d]^-0 \\
F_3 \ar[u]^-\delta & F_2 \ar[l]_-{\gamma'} & 0 \ar[l]_-{0} 
}
$$
where $F_2$ and $F_3$ are free abelian groups, $R_1$ is a Riesz group, and $G_2$ and $G_3$ have the trivial pre-ordering (i.e., $G_i^+ = G_i$ for $i=2,3$).
\end{theorem}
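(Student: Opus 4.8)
The plan is to treat the two assertions separately. That $\Ksix(C^*(E),\I)$ is a complete stable isomorphism invariant within the class is immediate from \cite[Theorem~4.7]{ET1}, since here $\I$ is a largest ideal and is AF; so all of the real work is the computation of the range, which I would split into an obstruction (necessity) half and a realization (sufficiency) half.

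For necessity, suppose $C^*(E)$ belongs to the class. Because $\I$ is AF, $K_0(\I)$ is a countable Riesz group, so I set $R_1=K_0(\I)$; and $K_1(\I)=0$, which accounts for the two zeros in the bottom row and, via Remark~\ref{gaugevanishes}, forces the index map to vanish. Since $\I$ is a largest ideal the quotient $C^*(E)/\I$ is simple, and it is purely infinite by hypothesis, so $G_3=K_0(C^*(E)/\I)$ is trivially pre-ordered, while $F_3=K_1(C^*(E)/\I)$ and $F_2=K_1(C^*(E))$ are free by Proposition~\ref{K-group-computation-prop}. The remaining point, that $G_2=K_0(C^*(E))$ is trivially pre-ordered, is exactly Corollary~\ref{graph-ext-order}(2), whose hypotheses hold because $\I$ is largest and $C^*(E)/\I$ is purely infinite and simple. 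This shows that every sequence in the range has the stated form.

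For sufficiency I would realize the ideal and the quotient by separate graphs and then glue. Proposition~\ref{realize-simple-K-theories-AF-prop} produces a row-finite AF graph $E_1$ with $(K_0(C^*(E_1)),K_0(C^*(E_1))^+)\cong(R_1,R_1^+)$ and $K_1(C^*(E_1))=0$, and Proposition~\ref{realize-simple-K-theories-prop} produces a transitive, row-finite, countably infinite graph $E_3$ with $K_0(C^*(E_3))\cong G_3$, $K_1(C^*(E_3))\cong F_3$, in which every vertex is the base point of at least two loops. By Lemma~\ref{adhcheck}$(r1)$ the graph $E_3$ is right adhesive, so Proposition~\ref{six-term-graph-splice} applies and yields a graph $E_2$ together with a gauge-invariant ideal $\I$ arising from $E_1$, realizing the group sequence $\mathcal E$ through isomorphisms $\alpha_i,\beta_i$. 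Since $E_3$ is transitive with $|E_3^0|=\infty$, condition (ii)(2) of Proposition~\ref{six-term-graph-splice-stenotic} lets me further arrange $\I$ to be stenotic.

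Finally I would upgrade \emph{stenotic} to \emph{largest} and confirm the pre-orders. As $C^*(E_2)/\I\cong C^*(E_3)$ is simple, $\I$ is maximal; together with stenoticity this forces every nontrivial ideal into $\I$, so $\I$ is the largest ideal, it is AF because $C^*(E_1)$ embeds as a full corner (Proposition~\ref{Ksix-from-snake-prop}(1)), and the quotient is purely infinite, placing $C^*(E_2)$ in the class. The order data then comes for free: $K_0(\I)\cong K_0(C^*(E_1))\cong R_1$ as ordered groups via the corner inclusion, while $G_2,G_3$ are trivially pre-ordered by hypothesis and $K_0(C^*(E_2)),K_0(C^*(E_3))$ are trivially pre-ordered by Corollary~\ref{graph-ext-order}(2) and pure infiniteness, so the group isomorphisms $\alpha_2,\alpha_3$ already produced are automatically order isomorphisms. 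I expect the main obstacle to be precisely this last step: promoting the merely essential ideal delivered by the general splicing result to a genuine largest ideal, which is where the stenotic refinement of Proposition~\ref{six-term-graph-splice-stenotic} combined with maximality of the simple quotient does the decisive work.
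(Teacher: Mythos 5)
Your proposal is correct and follows essentially the same route as the paper's own proof: the same appeal to \cite[Theorem~4.7]{ET1} for completeness of the invariant, the same necessity ingredients (vanishing index map, AF and purely infinite $K$-theory facts, Corollary~\ref{graph-ext-order}(2)), and the same realization via Propositions~\ref{realize-simple-K-theories-AF-prop} and \ref{realize-simple-K-theories-prop}, right adhesiveness from Lemma~\ref{adhcheck}$(r1)$, gluing with Proposition~\ref{six-term-graph-splice-stenotic}(ii)(2), and the stenotic-plus-simple-quotient argument to conclude that $\I$ is largest. Your closing discussion of why $\alpha_1$ and $\alpha_2$ are automatically order isomorphisms only makes explicit what the paper leaves implicit.
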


\begin{proof}
It follows from \cite[Theorem~4.7]{ET1} that $\Ksix (C^*(E),\I)$ is a complete stable isomorphism invariant within this class.  The necessity of the form of the exact sequence stated above follows from the fact that the descending map $\partial_0 : K_0(C^*(E)/\I) \to K_1(\I)$ is always zero \cite[Theorem~4.1]{CET},  by well-known facts about the ordered $K$-theory of 
AF or purely infinite $C^*$-algebras, and from Corollary \ref{graph-ext-order}(2).

To see that all such exact sequences are attained, we know by Proposition~\ref{realize-simple-K-theories-AF-prop} that there exists a row-finite graph with no sinks $E_1$ such that $C^*(E_1)$ is an AF-algebra and $K_0(C^*(E_1))$ is order isomorphic to $R_1$. By Proposition~\ref{realize-simple-K-theories-prop} there exists a row-finite graph with no sinks $E_3$ satisfying Conditions (1)--(3) of Proposition~\ref{realize-simple-K-theories-prop}, with $K_0(C^*(E_3)) \cong G_3$ and $K_1(C^*(E_3)) \cong F_3$.  Then $C^*(E_3)$ is purely infinite and simple.  By Condition~(2) of Proposition~\ref{realize-simple-K-theories-prop} each vertex of the graph $E_3$ contains two loops, and hence $E_3$ is right adhesive appealing to condition $(r1)$ of Lemma \ref{adhcheck}. We have arranged that $|E_3^0|=\infty$ and that $E_3$ is transitive, so by Proposition \ref{six-term-graph-splice-stenotic}(ii)(2) we may glue together $E_1$ and $E_3$ in such a way that the 
ideal $\I$ of $C^*(E_2)$ corresponding to $E_1$ is stenotic. Since $C^*(E_2) / \I \cong C^*(E_3)$ is simple, it follows that  $\I$ is the largest ideal of $C^*(E_2)$.  We have arranged that it is AF. Moreover, Corollary \ref{graph-ext-order}(2) implies $K_0(C^*(E_2))^+ = K_0(C^*(E_2))$, so since we have assumed that $G_2^+ = G_2$, our constructed map $\alpha_2$ will automatically  be an order isomorphism.  Hence $\Ksix (C^*(E_2), \I$ is order isomorphic to the required six-term exact sequence. 
\end{proof}

\begin{theorem} \label{smallest-ideal-range-thm}
Let $C^*(E)$ be a graph $C^*$-algebra with a smallest nontrivial ideal $\I$ such that $C^*(E)/\I$ is an AF-algebra and $\I$ is purely infinite.  Then $\Ksix (C^*(E),\I)$ is a complete stable isomorphism invariant within this class, and the range of this invariant is all six-term exact sequences of countable abelian groups
$$\xymatrix{
G_1 \ar[r]^-{\epsilon} & G_2 \ar[r]^-{\gamma} & R_3 \ar[d]^-0 \\
0 \ar[u]^-0 & F_2 \ar[l]_-{0} & F_1 \ar[l]_-{\epsilon'} 
}
$$
where $F_1$ and $F_2$ are free abelian groups, $R_3$ is a Riesz group, the group $G_1$ has the trivial pre-ordering $G_1^+=G_1$, and $G_2$ has the pre-ordering $G_2^+ = \{ x \in G_2 : \gamma(x) \geq 0 \}$. 
\end{theorem}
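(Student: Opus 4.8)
The plan is to mirror the argument for Theorem~\ref{largest-ideal-range-thm}, interchanging the roles of ideal and quotient: here the ideal is purely infinite and simple while the quotient is AF. That $\Ksix(C^*(E),\I)$ is a complete stable isomorphism invariant within the class is precisely \cite[Corollary~6.4]{ERR}, so only the range assertion needs proof, and it splits into the usual necessity and sufficiency halves.

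For necessity I would first record the structural obstructions. The vanishing of $\partial_0\colon K_0(C^*(E)/\I)\to K_1(\I)$ is automatic for a gauge-invariant ideal by Remark~\ref{gaugevanishes}. Since $C^*(E)/\I$ is AF, we have $K_1(C^*(E)/\I)=0$, which forces the lower-left group and its two adjacent maps to be trivial; and $R_3:=K_0(C^*(E)/\I)$ is a countable Riesz group. Since $\I$ is purely infinite and simple, $K_0(\I)=G_1$ is trivially pre-ordered, while $F_1=K_1(\I)$ and $F_2=K_1(C^*(E))$ are free by Proposition~\ref{K-group-computation-prop}. The delicate point is the pre-order on $G_2=K_0(C^*(E))$, which I would obtain from Corollary~\ref{graph-ext-order}(1), giving $K_0(C^*(E))^+=\{x:\pi_*(x)\geq 0\}$ once we know $C^*(E)$ has real rank zero. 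Real rank zero holds because every cycle of $E$ lies inside the hereditary vertex set defining $\I$ (the quotient graph being AF is acyclic, and a cycle cannot leave a hereditary set), and purely infinite simplicity of $\I$ forces every vertex on such a cycle to lie on at least two cycles, since cofinality routes an exit of any cycle back to its base point; thus $E$ satisfies Condition~(K) and $C^*(E)$ has real rank zero.

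For sufficiency, given such a sequence I would realize the two ends and glue. Using the sharpened Szyma\'nski theorem (Proposition~\ref{realize-simple-K-theories-prop}) I choose a row-finite transitive graph $E_1$ with countably infinitely many vertices, each the base point of two loops, realizing $K_0(C^*(E_1))\cong G_1$ and $K_1(C^*(E_1))\cong F_1$, so that $C^*(E_1)$ is purely infinite and simple. Using Proposition~\ref{realize-simple-K-theories-AF-prop} I choose a row-finite graph $E_3$ with no sinks, no sources, and no cycles, with $K_0(C^*(E_3))\cong R_3$ as ordered groups and $K_1(C^*(E_3))=0$. Since $E_1$ has no singular vertices and every vertex supports two loops, $E_1$ is left adhesive by condition $(\ell 1)$ of Lemma~\ref{adhcheck}, so Proposition~\ref{six-term-graph-splice} produces a graph $E_2$ gluing $E_1$ and $E_3$ whose sequence $\Ksix(C^*(E_2),\I)$ realizes the given group sequence, with $C^*(E_2)/\I\cong C^*(E_3)$ AF and $\I$ containing $C^*(E_1)$ as a full corner (Proposition~\ref{Ksix-from-snake-prop}(1)), hence purely infinite and simple. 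Transitivity of $E_1$ means the hereditary-saturated closure of a single vertex is all of $E_1^0$, so Proposition~\ref{six-term-graph-splice-stenotic}(i) lets me take $\I$ stenotic; a stenotic ideal that is itself a simple $C^*$-algebra is then the smallest nontrivial ideal, since any nontrivial ideal either contains $\I$ or, lying inside the simple $\I$, equals it.

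It remains to match the order, which I expect to be the crux of the construction. The graph $E_2$ satisfies Condition~(K) — all its cycles lie in $E_1$, where every vertex carries two loops, and no edges run from $E_3$ back into itself to create new cycles — so $C^*(E_2)$ has real rank zero, and Corollary~\ref{graph-ext-order}(1) yields $K_0(C^*(E_2))^+=\{x:\pi_*(x)\geq 0\}$. Because $\alpha_3$ is an order isomorphism onto the Riesz group $R_3$ and the splicing isomorphism intertwines $\pi_*$ with $\gamma$, the map $\alpha_2$ carries this cone exactly onto $\{y\in G_2:\gamma(y)\geq 0\}=G_2^+$; and $\alpha_1$ is trivially an order isomorphism since both $K_0(\I)$ and $G_1$ are trivially pre-ordered. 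Hence the maps $\alpha_i,\beta_i$ for $i=1,2,3$ furnish an order isomorphism of $\Ksixo(C^*(E_2),\I)$ onto $\mathcal E^+$, completing the range computation.
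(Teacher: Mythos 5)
Your proposal is correct and follows essentially the same route as the paper's proof: completeness of the invariant from \cite[Corollary~6.4]{ERR}, necessity from the vanishing of $\partial_0$, the standard AF/purely-infinite $K$-theory facts, and Corollary~\ref{graph-ext-order}(1), and realization by gluing a Szyma\'nski-type graph $E_1$ (left adhesive via condition $(\ell 1)$ of Lemma~\ref{adhcheck}) to an AF graph $E_3$ using Proposition~\ref{six-term-graph-splice}, then matching orders through $K_0(C^*(E_2))^+=\{x:\pi_*(x)\geq 0\}$. The only minor deviations are that you obtain smallest-ideal-ness via Proposition~\ref{six-term-graph-splice-stenotic}(i) together with simplicity of $\I$, whereas the paper argues directly that every nonempty hereditary subset of $E_2$ must contain $E_1^0$, and that you make explicit the real-rank-zero (Condition~(K)) hypothesis needed for Corollary~\ref{graph-ext-order}(1), which the paper leaves implicit.
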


\begin{proof} 
It follows from \cite[Corollary~6.4]{ERR} that $\Ksix (C^*(E),\I)$ is a complete stable isomorphism invariant within this class.  The necessity of the form of the exact sequence stated above follows from the fact that the descending map $\partial_0 : K_0(C^*(E) /\I) \to K_1(\I)$ is always zero \cite[Theorem~4.1]{CET}, the fact that the $K_0$-group of an AF-algebra is always a Riesz group, the fact that the $K_1$-group of an AF-algebra is always zero, the fact that the homomorphisms that appear are always order homomorphisms, and from Corollary~\ref{graph-ext-order}(2).

To see that all such exact sequences are attained,  we know by Proposition~\ref{realize-simple-K-theories-AF-prop} that there exists a row-finite graph with no sinks $E_3$ such that $C^*(E_3)$ is an AF-algebra and $K_0(C^*(E_3))$ is order isomorphic to $R_3$. By Proposition~\ref{realize-simple-K-theories-prop} there exists a row-finite graph with no sinks $E_1$ satisfying Conditions (1)--(3) of Proposition~\ref{realize-simple-K-theories-prop}, with $K_0(C^*(E_1)) \cong G_1$ and $K_1(C^*(E_1)) \cong F_1$.  Then $C^*(E_1)$ is purely infinite and simple.  By Condition~(2) of Proposition~\ref{realize-simple-K-theories-prop} each vertex of the graph $E_1$ contains two loops.  Thus the regular vertex matrix $R_{E_1}-I$ has non-negative entries and positive entries down its diagonal.  It follows from Proposition~\ref{six-term-graph-splice} that there exists a graph $E_2$ satisfying Conditions~(1)--(4) of Proposition~\ref{six-term-graph-splice}.  Furthermore, $C^*(E_2) / \I$ is an AF-algebra, and $\I$ is Morita equivalent to $C^*(E_1)$ and thus purely infinite and simple.  Because every vertex of $E^0_3$ has a finite and nonzero number of edges from this vertex to $E^0_1$, and because $E_1$ is strongly connected, it follows that any nonempty hereditary subset of $E_2$ must contain $E_1^0$.   Thus $\I$ is the smallest ideal of $C^*(E_2)$.   Moreover, Corollary \ref{graph-ext-order}(1) implies $K_0(C^*(E_2))^+ =  \{ x \in K_0(C^*(E_2)) : \pi_* (x) \geq 0 \}$ and thus in the commutative diagram \eqref{isodef} we have that $\phi$ is an order isomorphism.  Hence $\Ksix (C^*(E_2), \I)$ is order isomorphic to the required six-term exact sequence. 
\end{proof}

Next we consider graph $C^*$-algebras with a unique nontrivial ideal.  Recall that if $E$ is a graph and $\I$ is a unique proper ideal in $C^*(E)$, then $\I$ and $C^*(E)/\I$ are simple.  Since $\I$ and $C^*(E)/\I$ are both graph $C^*$-algebras, by \cite[Lemma~1.3]{DHS} and \cite[Corollary~3.5]{BHRS}, and since any simple graph $C^*$-algebra is either an AF-algebra or a Kirchberg algebra, there are four cases to consider:

$ $

Type $\inin$ : $\I$  and $C^*(E)/\I$ are both Kirchberg algebras.

Type $\fiin$ : $\I$ is an AF-algebra and $C^*(E)/\I$ is a Kirchberg algebra.

Type $\infi$ : $\I$ is a Kirchberg algebra and $C^*(E)/\I$ is an AF-algebra.

Type $\fifi$ : $\I$ and $C^*(E)/\I$ are both AF-algebras.

$ $

\begin{theorem} \label{one-ideal-range-thm}
If $C^*(E)$ is a graph $C^*$-algebra with a unique nontrivial ideal $\I$, then $\Ksix (C^*(E),\I)$ is a complete stable isomorphism invariant within this class.  The range of this invariant for the four possible types describe the six-term exact sequences 
$$\xymatrix{
G_1 \ar[r]^-{\epsilon} & G_2 \ar[r]^-{\gamma} & G_3 \ar[d]^-0 \\
F_3 \ar[u]^-\delta & F_2 \ar[l]_-{\gamma'} & F_1 \ar[l]_-{\epsilon'} 
}
$$
occurring as follows, where each $F_1$, $F_2$, and $F_3$ are free abelian groups.

\noindent Type $\inin$ : All the groups $G_1$, $G_2$, and $G_3$ have the trivial pre-ordering (i.e., $G_i^+ = G_i$ for $i=1,2,3$).

\noindent Type $\fiin$ : $F_1=0$, $G_1$ is a simple Riesz group,  and $G_2$ and $G_3$ have the trivial pre-ordering (i.e., $G_i^+ = G_i$ for $i=2,3$).

\noindent Type $\infi$ : $F_3=0$, $G_3$ is a simple Riesz group, the group $G_1$ has the trivial pre-ordering $G_1^+=G_1$, and $G_2$ has the pre-ordering $G_2^+ = \epsilon (G_1) \sqcup \{ x \in G_2 : \gamma (x) > 0 \}$.

\noindent Type $\fifi$ : $F_1=0$, $F_2=0$, $F_3=0$, $G_1$, $G_2$, and $G_3$ are Riesz groups, $G_1$ and $G_3$ are simple ordered groups, and the sequence is lexicographically ordered (cf. \cite{handelman}); i.e., $\epsilon(G_1^+) = \epsilon(G_1) \cap G_2^+$ and $\gamma(G_2^+) = G_3^+$.
\end{theorem}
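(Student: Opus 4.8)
The plan is to treat the four types largely in parallel, deducing the necessity of the stated restrictions from general facts and then realizing each admissible sequence by an explicit graph obtained (except in Type $\fifi$) by gluing. That $\Ksix(C^*(E),\I)$ is a complete stable isomorphism invariant in this class is \cite[Theorem~4.5]{ET1}. For necessity, recall that $\partial_0\colon K_0(C^*(E)/\I)\to K_1(\I)$ vanishes by \cite[Theorem~4.1]{CET} and that each $F_i$ is free by Proposition~\ref{K-group-computation-prop}; since $K_1$ of an AF-algebra is $0$, this forces $F_1=0$ in Types $\fiin,\fifi$ and $F_3=0$ in Types $\infi,\fifi$. Because $\I$ is the unique nontrivial ideal it is simultaneously smallest and largest, so $\I$ and $C^*(E)/\I$ are both simple; thus $G_1$ (resp.\ $G_3$) is a simple Riesz group when $\I$ (resp.\ $C^*(E)/\I$) is AF and is trivially pre-ordered when the corresponding algebra is a Kirchberg algebra. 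The order on $G_2$ is then pinned down by Corollary~\ref{graph-ext-order}: in Types $\inin,\fiin$ the quotient is purely infinite simple and $\I$ is largest, so part~(2) gives $G_2^+=G_2$; in Type $\infi$ the ideal is purely infinite simple and $C^*(E)$ has real rank zero (all cycles lie in the hereditary part realizing $\I$, which satisfies Condition~(K), while the quotient graph is acyclic), so part~(1) yields $G_2^+=\{x:\gamma(x)\ge 0\}=\epsilon(G_1)\sqcup\{x:\gamma(x)>0\}$.

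For realization in Types $\inin$, $\fiin$, and $\infi$ I would build two graphs and splice. Use Proposition~\ref{realize-simple-K-theories-AF-prop} to realize an AF piece and Proposition~\ref{realize-simple-K-theories-prop} to realize a Kirchberg piece, choosing $E_1$ with $(K_0,K_1)\cong(G_1,F_1)$ and $E_3$ with $(K_0,K_1)\cong(G_3,F_3)$. In each of these three types at least one of the two graphs is a Kirchberg graph in which every vertex is the base point of two loops, hence left adhesive via $(\ell 1)$ when it is $E_1$ and right adhesive via $(r1)$ when it is $E_3$ (Lemma~\ref{adhcheck}). Proposition~\ref{six-term-graph-splice} then produces $E_2$ with $\Ksix(C^*(E_2),\I)$ isomorphic to $\mathcal E$ via the given $\alpha_i,\beta_i$, where $\I$ is the gauge-invariant ideal determined by $E_1^0$. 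This $\I$ is the \emph{unique} nontrivial ideal: it is largest because $C^*(E_2)/\I\cong C^*(E_3)$ is simple, and it is smallest because every vertex of $E_3$ emits an edge into $E_1$ and $C^*(E_1)$ is simple, so any nonempty hereditary saturated subset of $E_2$ meets $E_1^0$ and therefore contains all of it.

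It remains to upgrade $\alpha_2$ to an order isomorphism. In Types $\inin,\fiin$ this is automatic: Corollary~\ref{graph-ext-order}(2) makes $K_0(C^*(E_2))$ trivially pre-ordered, matching $G_2^+=G_2$. In Type $\infi$ the graph $E_2$ satisfies Condition~(K) (its only cycles are the loops inside the Kirchberg graph $E_1$), so $C^*(E_2)$ has real rank zero and Corollary~\ref{graph-ext-order}(1) gives $K_0(C^*(E_2))^+=\{y:\pi_*(y)\ge 0\}$; since $\alpha_3$ is an order isomorphism onto the simple Riesz group $K_0(C^*(E_3))$ and $\gamma$ corresponds to $\pi_*$, the map $\alpha_2$ carries $\{x:\gamma(x)\ge 0\}$ onto $K_0(C^*(E_2))^+$, as required.

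Type $\fifi$ must be handled differently, and I expect it to be the main obstacle, since here both pieces are AF and neither admits an adhesive realization, so the gluing of Proposition~\ref{six-term-graph-splice} is unavailable. Instead I would realize the whole algebra at once: the sequence reduces to a short exact sequence $0\to G_1\to G_2\to G_3\to 0$ of Riesz groups with $G_1,G_3$ simple and the order lexicographic in the sense $\epsilon(G_1^+)=\epsilon(G_1)\cap G_2^+$ and $\gamma(G_2^+)=G_3^+$. Invoking \cite{handelman}, such a $G_2$ is again a countable Riesz group whose only ideals are $0$, $\epsilon(G_1)$, and $G_2$; the point is that lexicographicality forces $\epsilon(g_1)\le x$ whenever $\gamma(x)>0$, so any nonzero order ideal not contained in $\epsilon(G_1)$ contains $\epsilon(G_1)$ and hence, by simplicity of $G_3$, equals $G_2$. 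Realizing $G_2$ by an AF graph $C^*$-algebra through Proposition~\ref{realize-simple-K-theories-AF-prop} and transporting the lattice bijection between the ideals of an AF-algebra and of its $K_0$-group then yields a graph $C^*$-algebra with a unique nontrivial ideal whose associated six-term sequence is the given one. Conversely, for necessity in this type the same bijection shows $\epsilon(G_1)$ is the unique nontrivial ideal of the Riesz group $G_2$, while $\epsilon(G_1^+)=\epsilon(G_1)\cap G_2^+$ holds by the very definition of an order ideal and $\gamma(G_2^+)=G_3^+$ follows from projection lifting in AF-algebras. The delicate points throughout are the verification that the spliced ideal is simultaneously smallest and largest and the exact identification of $G_2^+$ in Types $\infi$ and $\fifi$; the lexicographic bookkeeping in the latter, resting on \cite{handelman}, is where the real work lies.
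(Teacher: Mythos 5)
Your proposal is correct and takes essentially the same route as the paper: same necessity argument (vanishing of $\partial_0$, freeness of the $F_i$, and Corollary~\ref{graph-ext-order}), the same adhesive-gluing realization via Proposition~\ref{six-term-graph-splice} for the mixed and purely infinite types, and the same direct AF realization of $G_2$ via Proposition~\ref{realize-simple-K-theories-AF-prop} plus the ideal-lattice correspondence for type $\fifi$. The only cosmetic difference is that the paper dispatches types $\fiin$ and $\infi$ by citing Theorems~\ref{largest-ideal-range-thm} and~\ref{smallest-ideal-range-thm} (whose proofs are exactly the gluing arguments you spell out inline), and your reading of ``lexicographically ordered'' as the genuine lexicographic cone in type $\fifi$ is precisely the reading the paper's own proof relies on.
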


\begin{proof}
It follows from \cite[Theorem~4.5]{ET1} that $\Ksix (C^*(E),\I)$ is a complete stable isomorphism invariant for the class of graph $C^*$-algebras with a unique nontrivial ideal.  Also, the necessity of the forms of the exact sequences stated for the four types follows from the fact that the descending map $\partial_0 : K_0(C^*(E)/\I) \to K_1(\I)$ is always zero \cite[Theorem~4.1]{CET}, the fact that the $K_0$-group of an AF-algebra is always a Riesz group, the fact that the $K_1$-group of an AF-algebra is always zero, the fact that the homomorphisms that appear are always order homomorphisms, and from Corollary \ref{graph-ext-order}.  To see that all sequences in the four types are attained, we consider the four cases separately.

\noindent \textbf{Case $\inin$:}  By Proposition~\ref{realize-simple-K-theories-prop} there exist row-finite graphs with no sinks $E_1$ and $E_3$, each satisfying Conditions (1)--(3) of Proposition~\ref{realize-simple-K-theories-prop}, with $K_0(C^*(E_1)) \cong G_1$ and $K_1(C^*(E_1)) \cong F_1$ and with $K_0(C^*(E_3)) \cong G_3$ and $K_1(C^*(E_3)) \cong F_3$.  Then $C^*(E_1)$ and $C^*(E_3)$ are purely infinite and simple, and in fact both are left and right adhesive.
Thus it follows from Proposition~\ref{six-term-graph-splice} that $E_1$ and $E_3$ may be glued to obtain a graph $E_2$ with an essential ideal $\I$ such that $C^*(E_2) / \I \cong C^*(E_3)$ is purely infinite and simple, and $\I$ is Morita equivalent to $C^*(E_1)$ and thus also purely infinite and simple.  It follows that  $\I$ is the unique proper ideal of $C^*(E_2)$ and that $C^*(E_2)$ is of type $\inin$.  Moreover, the commutative diagram that appears in \eqref{isodef} has that $\phi$ is an order isomorphism due to the fact that $K_0(C^*(E_2))^+ = K_0(C^*(E_2))$ by Corollary \ref{graph-ext-order} and $G_2^+ = G_2$.  Thus $\Ksix (C^*(E_2), \I)$ is order isomorphic to the required six-term exact sequence.

\noindent \textbf{Case $\fiin$:}  This is a special case of  Theorem \ref{largest-ideal-range-thm}.

\noindent \textbf{Case $\infi$:}  This is a special case of  Theorem \ref{smallest-ideal-range-thm}.

\noindent \textbf{Case $\fifi$:} It follows from Proposition~\ref{realize-simple-K-theories-AF-prop} that there exists a row-finite graph with no sinks $E$ such that $C^*(E)$ is an AF-algebra and $K_0(C^*(E))$ is order isomorphic to $R_2$.   Since the extension $0 \to R_1 \to R_2 \to R_3 \to 0$ is order exact and $R_1$ and $R_3$ are simple, it follows that $R_2$ has exactly one nontrivial order ideal, namely $\epsilon(R_1)$.  Thus there exists a unique nontrivial ideal $\I \triangleleft C^*(E)$ with $\Ksix(C^*(E), \I)$ isomorphic to the given sequence.  Furthermore, since ideals and quotients of AF-algebras are AF-algebras, we see that $C^*(E)$ is of type $\fifi$.
\end{proof}

Next we consider the range of the six-term exact sequence for graph $C^*$-algebras that are unital extensions of Kirchberg algebras. We do not as yet have a complete classification theory within this class, but in the $\inin$ case such a result was provided adding only the class of the unit to $\Ksix (C^*(E),\I)$ in \cite{withgunnar}. We predict that this will be true for all unital graph $C^*$-algebras with a unique nontrivial ideal, and describe the range of this invariant here.

\begin{theorem} \label{one-ideal-finite-vertices-range-thm}
If $C^*(E)$ is the $C^*$-algebra of a graph with a finite number of vertices that contains a unique nontrivial ideal $\I$, then the range of this invariant is all six-term exact sequences
$$\xymatrix{
G_1 \ar[r]^-{\epsilon} & G_2 \ar[r]^-{\gamma} & G_3 \ar[d]^-0 \\
F_3 \ar[u]^-\delta & F_2 \ar[l]_-{\gamma'} & F_1 \ar[l]_-{\epsilon'} 
}
$$
as in Theorem \ref{one-ideal-range-thm}, 
satisfying the further conditions:
\begin{enumerate}[(1)]
\item $F_1$, $F_3$, $G_1$, and $G_3$ are finitely generated abelian groups, 
\item $\rank F_1 \leq \rank G_1$ and $\rank F_3 \leq \rank G_3$,
\item if $(G_1,G_1^+)$ or $(G_3,G_3^+)$ is a Riesz group, that group is $(\Z,\Z^+)$ 
\end{enumerate}
The order unit $g_2$ of $K_0(C^*(E))$ can be any element of $G_2$ satisfying
\begin{enumerate}[(1)]
\addtocounter{enumi}{3}
\item if $(G_3,G_3^+)=(\Z,\Z^+)$ then $\gamma(g_2)>0$.
\end{enumerate}
Moreover, if $G_1 \cong \Z_{m_1} \oplus\ldots\oplus \Z_{m_k} \oplus \Z^m$ and $G_3 \cong \Z_{n_1}\oplus \ldots\oplus \Z_{n_l} \oplus \Z^n$, then $E$ may be chosen with $ m+k+n+l+2$ vertices. 
\end{theorem}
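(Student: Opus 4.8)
The plan is to split the argument into necessity, sufficiency, and a bookkeeping count of vertices, treating the four types $\inin$, $\fiin$, $\infi$, $\fifi$ in parallel and reducing everything to the realization and gluing machinery of Sections~\ref{simple-realize-sec}--\ref{gluing}.

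For necessity I would first note that a finite vertex set forces the graphs $E_1$ and $E_3$ of the ideal and the quotient (Proposition~\ref{Ksix-from-snake-prop}) to be finite, since $E^0 = E_1^0 \sqcup E_3^0$. Hence $R_{E_1}-I$ and $R_{E_3}-I$ are genuine finite matrices, so $G_1,F_1,G_3,F_3$ are finitely generated by Proposition~\ref{K-group-computation-prop}, giving (1); and the rank identity \eqref{rank-computation} applied to $E_1$ and $E_3$ gives $\rank F_i \le \rank G_i$ because $|(E_i^0)_\textnormal{reg}| \le |E_i^0|$, giving (2). For (3), the key fact is that $C^*(E_1)$ and $C^*(E_3)$ are unital (finite vertex sets) simple AF algebras with finitely generated $K_0$, hence finite dimensional, hence isomorphic to some $M_d$ with $K_0 \cong (\Z,\Z^+)$. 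The order-theoretic shape of the sequence in each type follows from Theorem~\ref{one-ideal-range-thm} together with the vanishing of $\partial_0$ \cite[Theorem~4.1]{CET}, and (4) holds because, when $(G_3,G_3^+)=(\Z,\Z^+)$, the quotient is a matrix algebra and $\gamma(g_2)=[1_{C^*(E)/\I}]_0$ is its (strictly positive) order unit.

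For sufficiency I would construct $E$ by splicing two finite graphs. Set $g_3:=\gamma(g_2)$. In the types with Kirchberg quotient ($\inin$, $\fiin$) I would realize $E_3$ by Proposition~\ref{realize-unital-algebras-2} (or Proposition~\ref{realize-CK-algebras} when $\rank F_3=\rank G_3$) so that $K_0(C^*(E_3))\cong G_3$, $K_1(C^*(E_3))\cong F_3$, the order unit is $g_3$, and $R_{E_3}-I$ has two rows one strictly dominating the other, which is precisely property~(4) established there. For $E_1$ I would realize $(G_1,F_1)$ by a Kirchberg graph (Proposition~\ref{realize-unital-algebras-2}/\ref{realize-CK-algebras}) in the $\inin$ case, and on any AF side by the two-vertex matrix-algebra graph given by one vertex emitting $d-1$ edges to a sink, which yields $M_d$ with $K_0\cong(\Z,\Z^+)$ and order unit $d$. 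Since each Kirchberg piece carries two loops at every vertex it is left or right adhesive (Lemma~\ref{adhcheck}), so Proposition~\ref{six-term-graph-splice} applies; I would then invoke Proposition~\ref{unit-Y-prop}, whose hypothesis on $B=R_{E_3}-I$ holds by construction, to choose the connecting block $Y$ simultaneously nonnegative with a positive entry in each column (forcing $\I$ essential and, by the connectivity argument of Propositions~\ref{six-term-graph-splice}--\ref{six-term-graph-splice-stenotic}, the unique nontrivial ideal) and with $\alpha_2([\mathbf 1])=g_2$. The $\infi$ case is symmetric: there $F_3=0$ and $E_3=M_{g_3}$, and since $g_3=\gamma(g_2)>0$ by (4) the matrix $R_{E_3}-I=\left(\begin{smallmatrix}-1\\ g_3-1\end{smallmatrix}\right)$ again has the required strictly comparable rows, so Proposition~\ref{unit-Y-prop} applies with $E_1$ the left-adhesive Kirchberg graph. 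For $\fifi$, where neither piece is adhesive, I would instead realize $G_2$ directly as the ordered $K_0$ of a single finite AF graph carrying a unique order ideal, exactly as in the $\fifi$ case of Theorem~\ref{one-ideal-range-thm}, reading off $g_2$ from the position of $[\mathbf 1]$.

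The vertex count is then bookkeeping: a Kirchberg factor with $k$ torsion summands and free rank $r$ costs $1+k+r$ vertices (Propositions~\ref{realize-unital-algebras}--\ref{realize-CK-algebras}), an AF factor $(\Z,\Z^+)$ costs at most two, and the two ``base'' vertices sum to give the stated bound $m+k+n+l+2$. I expect the main obstacle to be precisely the simultaneous control delivered by Proposition~\ref{unit-Y-prop}: pinning the order unit $g_2$ while keeping $Y$ nonnegative with nonzero columns and without spending an extra vertex. The entire strategy relies on having engineered, back in Proposition~\ref{realize-unital-algebras-2}, a quotient graph whose matrix has two strictly comparable rows, so the delicate point is verifying that this property genuinely persists in every concrete $E_3$ used here—including the AF matrix-algebra graphs—and that the resulting $Y$ does keep the ideal unique; this is where I would spend most of the effort.
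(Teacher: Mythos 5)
Your necessity argument and your $\inin$ and $\fiin$ constructions track the paper's proof essentially verbatim (and your insistence on citing Proposition~\ref{realize-unital-algebras-2} rather than Proposition~\ref{realize-unital-algebras}, so that the row-domination hypothesis of Proposition~\ref{unit-Y-prop} is actually available, is the correct reading of what the gluing step needs). However, two of your four cases have genuine gaps. The first is the $\infi$ case when $\gamma(g_2)=1$. Your quotient graph ``one vertex emitting $g_3-1$ edges to a sink'' does not exist for $g_3=1$: a regular vertex must emit at least one edge, so $\left(\begin{smallmatrix}-1\\ g_3-1\end{smallmatrix}\right)$ is not of the form $R_{E_3}-I$ for any graph when $g_3-1=0$. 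Since $\alpha_3$ must be an order isomorphism onto $(\Z,\Z^+)$ and Proposition~\ref{unit-Y-prop} requires $\alpha_3([\mathbf 1])=\gamma(g_2)$, the quotient algebra is forced to be $M_{g_3}(\C)$; for $g_3=1$ this means the solitary-vertex graph, whose regular vertex matrix is the empty matrix in $M_{1,0}(\Z)$, so there are no two rows to compare and the hypothesis of Proposition~\ref{unit-Y-prop} is unavailable as stated. This is precisely why the paper routes the $\infi$ case differently: since $F_3=0$ and $G_3=\Z$, the sequence splits, and the paper glues with $Y=0$ (only infinitely many edges from the sink into $E_1$) and pins the unit via Proposition~\ref{split-Y-prop}, which needs no hypothesis on $B$. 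Your route does work for $g_3\geq 2$, but as written $g_3=1$ is a hole that must be closed either by falling back on Proposition~\ref{split-Y-prop} or by a separate degenerate argument.

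The second gap is the $\fifi$ case, which your proposal does not actually prove. Realizing $G_2$ ``exactly as in the $\fifi$ case of Theorem~\ref{one-ideal-range-thm}'' is not possible here: that case invokes Proposition~\ref{realize-simple-K-theories-AF-prop}, i.e.\ Effros--Handelman--Shen plus Drinen, which produces a \emph{stable} AF algebra over a countably infinite vertex set, whereas now $E$ must have finitely many vertices and a prescribed unit class. Moreover, ``reading off $g_2$ from the position of $[\mathbf 1]$'' inverts the burden of proof: the theorem demands that \emph{every} admissible $g_2$ (every element of $G_2\cong\Z\oplus\Z$, lexicographically ordered, with $\gamma(g_2)>0$) be attained. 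The paper handles this with a concrete construction: the four-vertex graphs in which the first vertex emits $x$ edges to the second, the second emits infinitely many edges to the third, and the third emits $y$ edges to the fourth, checking by elementary computation that varying $x$ and $y$ sweeps out all admissible orders and units (and meets the vertex bound $m+k+n+l+2=4$). Some such explicit family is what is missing from your argument; neither the splicing machinery of Proposition~\ref{six-term-graph-splice} nor Proposition~\ref{unit-Y-prop} can supply it, since, as you yourself note, neither AF piece is adhesive and neither has the comparable-rows property.
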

\begin{proof}
The necessary conditions from Theorem \ref{one-ideal-range-thm} are of course also relevant here, and we saw in the discussion preceding Proposition \ref{realize-unital-algebras} that (1) and (2) must hold in this case.
 In addition, we note that the only unital simple graph $C^*$-algebras that are AF are of the form $M_n(\C)$, proving that when $G_3$ is a Riesz group, it must be $\Z$. The same reasoning holds for $G_1$ since the ideal must be Morita equivalent to $M_n(\C)$. And finally, when $g_2=[1_{C^*(E)}]_0$ is given, we get that $\gamma(g_2)$ is given by the unit of the quotient, which is a strictly positive element of $K_0(M_n(\C))$ in case that is AF, proving necessity of (4).

To realize the invariant we argue separately for each case.

\noindent \textbf{Case $\inin$:}  For $i=1,3$ we find graphs $E_i$ realizing $G_i$ and  $F_i$ as above, but with the added assumptions that $|E_0^i|<\infty$ and that $[\mathbf 1]=\gamma(g_2)$ by appealing to Proposition \ref{realize-unital-algebras} rather than Proposition \ref{realize-simple-K-theories-prop}. We get that $C^*(E_i)$ is purely infinite and simple, as desired. The graph $E_3$ is right adhesive by condition (r1), so we may obtain $E_2$ by gluing $E_1$ and $E_3$ in a way that the obtained isomorphism $\alpha_2$ sends $\mathbf 1$ to $g_2$ because of Proposition \ref{unit-Y-prop}.

\noindent \textbf{Case $\fiin$:}  In this case $G_1=\Z$ and $F_1=0$, which we may realize by a graph $E_1$ with one vertex and no edges. We further realize $G_3$ and $F_3$ by $E_3$ as above, arranging so that $[\mathbf 1]=\gamma(g_2)$. Since $E_3$ is right adhesive, we may complete the argument as in the $\inin$ case.

\noindent \textbf{Case $\infi$:}  In this case $G_3=\Z$ and $F_3=0$, so that there is a splitting map $\sigma$ for $\gamma$. With $n>0$ the position of $\gamma(g_2)$ in $G_3=\Z$, we realize $G_3$ by a graph $E_1$ with two vertices $\{v,w\}$ and $n-1$ edges from $v$ to $w$ when $n>1$, or by a solitary vertex if $n=1$. We further realize $G_1$ and $F_1$ by $E_1$ as above, arranging so that $[\mathbf 1]=g_2-\sigma\circ\gamma(g_2)$. Letting $E_2$ be the union of $E_1$ and $E_3$ with infinitely many edges from the sink in $E_3$ to each vertex in $E_1$, we obtain a graph $C^*$-algebra $C^*(E_2)$  with a unique ideal.
Now appeal to Proposition \ref{split-Y-prop} to see that the invariant has the desired form.

\noindent \textbf{Case $\fifi$:}  
One checks by elementary methods that the graphs
\[
\xymatrix{
{\bullet}\ar[r]^x&{\bullet}\ar@{=>}[r]^\infty&{\bullet}\ar[r]^y&\bullet}
\]
generate all possible choices of order and unit in the given extension.
\end{proof}

Using basic group theory, and the fact that if $\phi : G \to H$ is a group homomorphism, then $\rank G = \rank \im \phi + \rank \ker \phi$, we see that the following relations are also satisfied:
\begin{itemize}
\item $F_2$ and $G_2$ are finitely generated abelian groups
\item $\rank F_1 \leq \rank F_2 \leq \rank F_1 + \rank F_3$
\item $\rank F_2  - \rank G_2= \rank F_3 - \rank G_3 + \rank F_1 - \rank G_1$ (so that, in particular, $\rank F_2 \leq \rank G_2$).
\end{itemize}

Recall that the class of Cuntz-Krieger algebras of matrices satisfying Condition~(II) coincides with the class of $C^*$-algebras of finite graphs with no sinks or sources that satisfy Condition~(K) (or, equivalently,  $C^*$-algebras of finite graphs with no sinks that have a finite number of ideals).  The following result can therefore be interpreted as determining the range of the six-term exact sequence for Cuntz-Krieger algebras with a unique nontrivial ideal.

\begin{theorem} \label{one-ideal-CK-range-thm}
If $C^*(E)$ is the $C^*$-algebra of a finite graph with no sinks or sources and with a unique nontrivial ideal $\I$, then $\Ksix (C^*(E),\I)$ and $[1_{C^*(E)}]$ is a complete isomorphism invariant within this class.  The range of this invariant is all six-term exact sequences
$$\xymatrix{
G_1 \ar[r]^-{\epsilon} & G_2 \ar[r]^-{\gamma} & G_3 \ar[d]^-0 \\
F_3 \ar[u]^-\delta & F_2 \ar[l]_-{\gamma'} & F_1 \ar[l]_-{\epsilon'} 
}
$$
satisfying the conditions (1),(3),(4) above as well as:
\begin{itemize}
\item[(2')] $\rank G_1 = \rank F_1$ and $\rank G_3 = \rank F_3$,
\end{itemize}
The order unit of $C^*(E)$ can be any element of $G_2$. Moreover, if $G_1 \cong \Z_{m_1} \oplus \ldots\oplus  \Z_{m_k} \oplus \Z^m$ and $G_3 \cong \Z_{n_1} \oplus\ldots\oplus \Z_{n_l} \oplus \Z^n$, then $E$ may be chosen with no more than $m+k+n+l+2$ vertices. 
\end{theorem}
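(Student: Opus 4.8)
The plan is to treat this as the Cuntz--Krieger refinement of Theorem~\ref{one-ideal-finite-vertices-range-thm}, the essential new input being that the no-sink/no-source hypothesis forces both simple subquotients to be purely infinite. For necessity, I would first observe that a $C^*$-algebra in this class is in particular a unital graph $C^*$-algebra with a unique nontrivial ideal, so conditions (1), (3), and (4) are inherited verbatim from Theorem~\ref{one-ideal-finite-vertices-range-thm}. To obtain (2'), note that in a finite graph with no sinks every vertex is regular; the graph cutting out $\I$ inherits the no-sink property, and the quotient graph has no sinks by saturation, so applying \eqref{rank-computation} to each of these two graphs gives $\rank G_1=\rank F_1$ and $\rank G_3=\rank F_3$. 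Moreover a simple graph $C^*$-algebra coming from a finite graph with no sinks cannot be AF, since a finite acyclic graph necessarily has a sink; hence both $\I$ and $C^*(E)/\I$ are purely infinite, the extension is of type $\inin$, the preorders on $G_1,G_2,G_3$ are all trivial, conditions (3) and (4) are vacuously satisfied, and every element of the trivially ordered group $G_2$ is an order unit. That $\Ksix(C^*(E),\I)$ together with $[1_{C^*(E)}]$ is a complete isomorphism invariant in this class is \cite{withgunnar}.

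For the realization I would start from a sequence satisfying (1), (2'), (3), (4) and first record that (2') forces the type to be $\inin$: in each of the other three types one of $F_1,F_3$ vanishes while the corresponding $G_i$ is a nonzero simple Riesz group, so $\rank G_i=\rank F_i=0$ collapses the ideal or quotient. With all groups trivially ordered, I would produce $E_1$ realizing $(G_1,F_1)$ and $E_3$ realizing $(G_3,F_3)$ with $[\mathbf 1]\mapsto\gamma(g_2)$ by invoking Proposition~\ref{realize-CK-algebras} in place of Proposition~\ref{realize-simple-K-theories-prop}; this yields finite graphs with finite edge sets, no sinks or sources, two loops at every vertex, and (for $E_3$) the pair of vertices required in property~(4) of Proposition~\ref{realize-unital-algebras-2}.

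Since every vertex of $E_3$ supports two loops, $E_3$ is right adhesive by condition $(r1)$ of Lemma~\ref{adhcheck}, so Proposition~\ref{six-term-graph-splice} glues $E_1$ and $E_3$ into a graph $E_2$ realizing the prescribed sequence of groups, and Proposition~\ref{unit-Y-prop}---whose hypothesis on $B=R_{E_3}-I$ is precisely property~(4), and whose hypothesis $\alpha_3([\mathbf 1])=\gamma(g_2)$ is arranged by the choice of unit class for $E_3$---lets me arrange $\alpha_2([\mathbf 1])=g_2$. Here no constraint on $g_2$ survives because $G_2$ is trivially ordered, matching the claim that the order unit may be any element of $G_2$.

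The main point to verify, and the only genuine obstacle, is that $E_2$ is again a Cuntz--Krieger graph, i.e.\ finite with finitely many edges and with no sinks or sources. The key observation is that $E_3$ has \emph{no} singular vertices, so in the gluing recipe of Proposition~\ref{six-term-graph-splice} only the finitely many edges of type (2)(a) are added and none of the infinitely many edges of type (2)(b); hence $E_2^1$ is finite. No sinks are created because $E_1$ and $E_3$ individually have none, and no sources are created because the connecting edges add only \emph{incoming} edges to vertices of $E_1$, which already had incoming edges. Finally the vertex count is $|E_2^0|=|E_1^0|+|E_3^0|\le(1+k+m)+(1+l+n)=m+k+n+l+2$, using the optimal counts from Proposition~\ref{realize-CK-algebras}, and $C^*(E_2)$ is then a Cuntz--Krieger algebra with a unique nontrivial ideal and the required invariant.
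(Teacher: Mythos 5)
Your proposal is correct and follows essentially the same route as the paper, whose proof is simply ``proceed as in Case $\inin$ of the unital one-ideal theorem, but use Proposition~\ref{realize-CK-algebras} in place of Proposition~\ref{realize-simple-K-theories-prop}'': realize $(G_1,F_1)$ and $(G_3,F_3,\gamma(g_2))$ by finite Cuntz--Krieger-type graphs with two loops at every vertex, glue via right adhesiveness $(r1)$ and Proposition~\ref{six-term-graph-splice}, and control the unit with Proposition~\ref{unit-Y-prop}. The details you supply beyond the paper's one-line proof --- necessity of $(2')$ via \eqref{rank-computation} and saturation, the observation that $(2')$ together with condition (3) forces type $\inin$, the finiteness of $E_2^1$ because $E_3$ has no singular vertices, and the vertex count --- are exactly the implicit content of that reference and are argued correctly.
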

\begin{proof}
Proceed as in the proof of  Case $\inin$ of Theorem~\ref{one-ideal-range-thm}, but use Proposition~\ref{realize-CK-algebras} in place of Proposition~\ref{realize-simple-K-theories-prop}. 
\end{proof}

\noindent From basic group theory, and using the fact that if $\phi : G \to H$ is a group homomorphism, then $\rank G = \rank \im \phi + \rank \ker \phi$, we see that the following relations are also satisfied:
\begin{itemize}
\item $F_2$ and $G_2$ are finitely generated abelian groups
\item $\rank F_1 \leq \rank F_2 \leq \rank F_1 + \rank F_3$
\item $\rank F_2 = \rank G_2$.
\end{itemize}

\section{Permanence}\label{permanence}

Consider a class of $C^*$-algebras $\mathfrak C$ with the property that
whenever $\A\in \mathfrak C$, any ideal $\I$ and any quotient $\A/\I$ also
lies in $\mathfrak C$. A \textbf{permanence result} for $\mathfrak C$ 
is a result that gives conditions for any extension 
\[
\xymatrix{{0}\ar[r]&\I \ar[r]^-{\iota}&\A \ar[r]^-{\pi}&\A/\I \ar[r]&0}
\]
to have the property that $\I,\A/\I\in \mathfrak C$ implies $\A\in \mathfrak C$, in terms of 
the six-term exact sequence 
\[
\xymatrix{K_0(\I) \ar[r]^-{\iota_*}&K_0(\A) \ar[r]^-{\pi_*}&K_0(\A/\I)
  \ar[d]^{\partial_0}\\
K_1(\A/\I)\ar[u]^-{\partial_1} &K_1(\A) \ar[l]^-{\pi_*}&K_1(\I) \ar[l]^-{\iota_*}}
\]
from $K$-theory.

Two well-known permanence results are of direct relevance for the
following. First, Brown in 
\cite{lgb:eafaplp} proved that if $\I$ and $\A/\I$ are AF algebras, then
so is $\A$. And second, it follows from 
\cite[Theorem~3.14 and Corollary~3.16]{lgbgkp:crrz} that if $\I$ and $\A/\I$ are of real rank
zero, then $\A$ is of real rank zero precisely when $\partial_0=0$.

We now set out to prove a permanence result for the class $\mathfrak C$
of stable graph $C^*$-algebras of real rank zero. It is known that when $\A\in
\mathfrak C$, then so is any ideal $\I$ and any quotient $\A/\I$. We offer
the following permanence result under the added assumptions that $\I$
and $\A/\I$ are either AF or simple, and $\I$ is a {stenotic}
ideal of $\A$.

Our strategy for doing so is as follows: Given an extension of graph $C^*$-algebras satisfying the necessary conditions, we build, using the results in the previous section, a graph realizing its $K$-theoretic data. And then we appeal to work of the first named author, Restorff and Ruiz to be able to prove by classification that the given extension $C^*$-algebra is in fact isomorphic to the one given by the constructed graph.

\begin{theorem}
Let 
\[
\xymatrix{0\ar[r]&C^*(E_1)\ar[r]&\A\ar[r]&C^*(E_3)\ar[r]&0}
\]
be a stenotic extension with $C^*(E_1)$ and $C^*(E_2)$ both stable and either simple or AF. The following are equivalent
\begin{enumerate}[(i)]
\item $\A$ is a graph $C^*$-algebra 
\item $\A$ is a graph $C^*$-algebra of real rank zero
\item 
\begin{enumerate}[(1)]
\item $\partial_0=0$; and
\item $
K_0(C^*(E_3))^+=K_0(C^*(E_3))\Longrightarrow K_0(\A)^+=K_0(\A)$
\end{enumerate}
\end{enumerate}
\end{theorem}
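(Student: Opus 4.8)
The plan is to prove the cycle of implications $(ii)\Rightarrow(i)\Rightarrow(iii)\Rightarrow(ii)$. The implication $(ii)\Rightarrow(i)$ is immediate. For $(i)\Rightarrow(iii)$ I would argue as follows: if $\A$ is a graph $C^*$-algebra, then because $\I$ and $\A/\I$ are each simple or AF, the ideal $\I$ cannot be one of the non-gauge-invariant ideals (which carry $C(\T)$-type summands), so $\I$ is gauge-invariant. Hence $\partial_0=0$ by \cite[Theorem~4.1]{CET} (cf.\ Remark~\ref{gaugevanishes}), which is $(1)$. For $(2)$, if $K_0(\A/\I)^+=K_0(\A/\I)$ then the simple algebra $\A/\I$ is purely infinite, and since $\A/\I$ is simple the ideal $\I$ is largest, so Corollary~\ref{graph-ext-order}(2) gives $K_0(\A)^+=K_0(\A)$.

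The core of the theorem is $(iii)\Rightarrow(ii)$. First, because $\I$ and $\A/\I$ are simple or AF they have real rank zero, so $(1)$ together with \cite[Theorem~3.14 and Corollary~3.16]{lgbgkp:crrz} forces $\A$ to have real rank zero. Next I would check that $\Ksixo(\A,\I)$ lies in the range computed in Section~\ref{ranges-sec}. The map $\partial_0$ vanishes by $(1)$; the three $K_1$-groups are free (those of $\I$ and $\A/\I$ as graph algebras, and that of $\A$ because the six-term sequence splits once $\partial_0=0$), so only the pre-order on $K_0(\A)$ must be pinned down. When $\I$ is purely infinite simple, Proposition~\ref{inx-case} applies verbatim to the real-rank-zero algebra $\A$ and yields $K_0(\A)^+=\{x:\pi_*(x)\geq 0\}$, exactly the order demanded in Theorems~\ref{smallest-ideal-range-thm} and~\ref{one-ideal-range-thm}. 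When $\A/\I$ is purely infinite simple, Proposition~\ref{inx-case} is unavailable, and it is precisely here that hypothesis $(iii)(2)$ is used: it gives $K_0(\A)^+=K_0(\A)$, matching Theorems~\ref{largest-ideal-range-thm} and~\ref{one-ideal-range-thm}. When both $\I$ and $\A/\I$ are AF, Brown's theorem \cite{lgb:eafaplp} shows $\A$ is AF and the order is the ambient Riesz order.

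With the invariant thus identified I would realize it by a graph. Using that $\I\cong C^*(E_1)$ and $\A/\I\cong C^*(E_3)$ are graph algebras of the type handled in Section~\ref{ranges-sec}, I would splice $E_1$ and $E_3$ together via Propositions~\ref{six-term-graph-splice} and~\ref{six-term-graph-splice-stenotic}, exactly as in the proofs of Theorems~\ref{largest-ideal-range-thm}--\ref{one-ideal-range-thm}, producing a graph $E_2$ with a stenotic ideal $\I'$ such that $\Ksixo(C^*(E_2),\I')$ is order isomorphic to $\Ksixo(\A,\I)$ and $C^*(E_2)$ is stable. Finally I would invoke the classification of such stenotic extensions due to Eilers--Restorff--Ruiz \cite{ERR} (together with \cite{ET1} in the two-simple case) to upgrade this isomorphism of invariants to an isomorphism $\A\cong C^*(E_2)$; combined with the real rank zero established above, this gives $(ii)$.

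The main obstacle is the order analysis in the second paragraph: $K_0(\A)^+$ is not determined functorially by the exact sequence alone, and in the purely-infinite-quotient case there is a genuine extra degree of freedom that is eliminated only by $(iii)(2)$. Indeed, the proof of Corollary~\ref{graph-ext-order}(2) produced the required trivial order from a projection visible inside the graph, a device unavailable for an abstract $\A$, which is exactly why that conclusion must instead be imposed as a hypothesis. A secondary point demanding care is confirming that $\A$ falls within the scope of the classification theorem invoked—that is, that it is a stable algebra with the correct stenotic ideal structure—so that the matched invariant yields an honest isomorphism rather than merely a stable one.
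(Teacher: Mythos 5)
Your strategy coincides with the paper's: derive the necessity of (iii) from gauge-invariance, \cite[Theorem~4.1]{CET}, Brown--Pedersen, and Corollary~\ref{graph-ext-order}; then, for the hard implication, realize $\Ksixo(\A,\I)$ by a spliced graph via the range theorems of Section~\ref{ranges-sec} and conclude by classification. Your order analysis is correct, and in places more explicit than the paper's (the paper itself notes that Proposition~\ref{inx-case} makes (iii)(2) automatic in the $\inin$ case, exactly your observation). The genuine gap is in the classification step: the Eilers--Restorff--Ruiz classification \cite{ERR} applies to \emph{full} extensions, and you never verify fullness. When the ideal is a stable Kirchberg algebra (types $\inin$ and $\infi$) fullness is automatic because $M(\I)/\I$ is simple; but in the type $\fiin$ case ($\I$ AF, $\A/\I$ Kirchberg) the extension is not full a priori, and the paper obtains fullness from hypothesis (iii)(2) combined with \cite[Corollary~3.17]{ERR2}. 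So (iii)(2) does double duty in precisely the case where you use it only to pin down the order: it also licenses the application of classification at all, and this cannot be dispensed with, since \cite[Example~4.3]{ERR2} shows the conclusion fails for non-full extensions. Relatedly, invoking \cite{ET1} in the two-simple case is circular: that theorem classifies graph $C^*$-algebras with a unique nontrivial ideal, whereas $\A$ is not yet known to be a graph algebra; the paper instead uses \cite{ERR}, which applies to abstract full extensions.

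A second, smaller gap is the stability of $\A$, which you flag as ``demanding care'' but leave unresolved. An extension of stable $C^*$-algebras need not be stable, and without stability of $\A$ the classification yields only $\A\otimes\K\cong C^*(E_2)\otimes\K$, which is not statement (i). The paper settles this in one line by appealing to the corona factorization property; some such argument must be supplied before the matched invariant can be upgraded to an honest isomorphism.
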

\begin{proof} 
We first note that by Brown's extension result combined with Proposition \ref{realize-simple-K-theories-AF-prop}, all of the claims hold true in the $\fifi$ case, (iii)(2) being vacuously true.

Turning to the remaining three cases, let $\I=C^*(E_1)$ considered an ideal of $\A$. In these cases, either $\I$ or $\A/\I$ is simple, so that $\I$ is necessarily gauge-invariant. As we have seen, this forces $\partial_0=0$, and since both $\I$ and $\A/\I$ have real rank zero, we conclude the same about $\A$, proving $(i)\Longrightarrow (ii)$. That $(ii)\Longrightarrow (iii)(1)$ is also clear from \cite{lgbgkp:crrz}, and that $(ii)\Longrightarrow (iii)(2)$ follows from Corollary \ref{graph-ext-order}(2) since when $K_0(C^*(F))$ is trivially ordered, $C^*(F)$ must in our case be simple and purely infinite.

In the remaining and most challenging direction, we first
note that $\A$ is stable by appealing to the corona
factorization property. In case $\inin$, we know that $K_0(\A)$ is trivially ordered, and may hence use Theorem \ref{one-ideal-range-thm} to realize $\Ksix(\A,\I)$ as an ordered group by some graph $E_3$. Since $M(\I)/\I$ is simple the extension is automatically full, and by \cite{ERR} we get that $\A\simeq C^*(E_3)$. The  case $\fiin$ is solved precisely the same way by appealing instead to Theorem \ref{smallest-ideal-range-thm}. 

In case $\fiin$ the result follows from Theorem~\ref{largest-ideal-range-thm}. This time the extension is not full a priori, but turns out to be so because of condition (iii)(2) combined with \cite[Corollary 3.17]{ERR2}, so again we obtain the desired result. 
\end{proof}

The condition (iii)(2) is vacuously true in the $\fifi$ and $\fiin$ cases, and automatically true in the $\inin$ case as seen in Proposition \ref{inx-case}. It is necessary in the $\infi$ case as noted in \cite[Example 4.3]{ERR2}.

 The class of unital graph $C^*$-algebras is not closed under taking
 ideals. Nevertheless, we have the following.

\begin{theorem}
Let 
\[
\xymatrix{0\ar[r]&C^*(E_1)\otimes{\mathbb K}\ar[r]&\A\ar[r]&C^*(E_3)\ar[r]&0}
\]
be a unital essential extension with $C^*(E_1)$ and $C^*(E_3)$ both unital, simple and purely
infinite $C^*$-algebras. The following are equivalent
\begin{enumerate}[(i)]
\item $\A$ is a graph $C^*$-algebra 
\item $\A$ has real rank zero
\end{enumerate}
\end{theorem}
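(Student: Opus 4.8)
The whole argument will pivot on the permanence principle cited above: since both the ideal $\I:=C^*(E_1)\otimes\K$ and the quotient $C^*(E_3)$ are purely infinite and simple, they have real rank zero, so by \cite{lgbgkp:crrz} the algebra $\A$ has real rank zero precisely when the index map $\partial_0\colon K_0(C^*(E_3))\to K_1(C^*(E_1))$ vanishes. I will therefore show that each of (i) and (ii) is equivalent to $\partial_0=0$. Note first that because $\I$ and $\A/\I$ are both simple, $\I$ is the unique nontrivial ideal of $\A$, and that $\A$ is separable and nuclear and satisfies the UCT, these properties passing through the extension from $C^*(E_1)$ and $C^*(E_3)$.

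For (i)$\Rightarrow$(ii), assume $\A\cong C^*(F)$ for some graph $F$. Its unique nontrivial ideal $\I$ is then gauge-invariant by \cite[Lemma~3.1]{ET1}, whence $\partial_0=0$ by Remark~\ref{gaugevanishes}, and the permanence result gives that $\A$ has real rank zero.

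For (ii)$\Rightarrow$(i), assume $\A$ has real rank zero, so that $\partial_0=0$. Then $\Ksix(\A,\I)$ is of type $\inin$, with $K_0(\A)$ trivially pre-ordered by Proposition~\ref{inx-case} (as $K_0(C^*(E_3))$ is trivially ordered); moreover, since $C^*(E_1)$ and $C^*(E_3)$ are unital graph $C^*$-algebras, the four outer groups are finitely generated and satisfy the rank inequalities required by Theorem~\ref{one-ideal-finite-vertices-range-thm}, while its remaining conditions, which detect the presence of an AF summand, impose nothing in the present purely infinite situation. Applying Theorem~\ref{one-ideal-finite-vertices-range-thm} with $g_2:=[1_\A]_0$, I obtain a finite graph $F$ for which $C^*(F)$ is unital with a unique nontrivial ideal $\J$ and with $(\Ksix(C^*(F),\J),[1_{C^*(F)}]_0)$ isomorphic to $(\Ksix(\A,\I),[1_\A]_0)$.

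The remaining and decisive step is to promote this agreement of invariants to an isomorphism $\A\cong C^*(F)$, and this classification is where I expect the real difficulty to lie. Both $\A$ and $C^*(F)$ are unital essential extensions of a unital Kirchberg algebra by a stable Kirchberg algebra; by the Kirchberg--Phillips theorem the two ideals are isomorphic and the two quotients are isomorphic as unital algebras (the unit classes agreeing by construction). Because the corona $M(\I)/\I$ is simple, each essential extension is automatically full, exactly as in the proof of the preceding theorem, the essentiality of the extension defining $C^*(F)$ being the one arranged in Proposition~\ref{six-term-graph-splice}. I would then invoke the classification of full extensions of \cite{ERR}, in the unital form that also records the class of the unit as in \cite{withgunnar}, to conclude $\A\cong C^*(F)$. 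The points needing the most care are the verification that this unital extension classification applies to the abstract algebra $\A$, and not merely to graph algebras, and that the unit class realized through Theorem~\ref{one-ideal-finite-vertices-range-thm} is faithfully transported by the classifying functor.
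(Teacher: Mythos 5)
Your proposal is correct and follows essentially the same route as the paper: the direction (i)$\Rightarrow$(ii) via gauge-invariance of the unique ideal forcing $\partial_0=0$ and the Brown--Pedersen real-rank-zero permanence result, and (ii)$\Rightarrow$(i) by realizing $(\Ksix(\A,\I),[1_\A]_0)$ with a finite graph through Theorem~\ref{one-ideal-finite-vertices-range-thm} and then concluding $\A\cong C^*(F)$ by the unital classification of \cite{withgunnar}. The additional detail you supply on fullness via simplicity of the corona $M(\I)/\I$ is exactly the mechanism the paper uses in its preceding (stable) permanence theorem, so it is a faithful elaboration of the citation rather than a different argument.
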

\begin{proof}
That (i) implies (ii) follows as above. To prove the other implication, set $\I=C^*(E_1)\otimes\K$ and first realize $\Ksix(\A,\I)$ along with the given element $[1_\A]_0$ by some finite graph $E_2$, using Theorem \ref{one-ideal-finite-vertices-range-thm}. By \cite{withgunnar}, we get that $\A\simeq C^*(E_2)$.
\end{proof}

\noindent Similarly, by Theorem \ref{one-ideal-CK-range-thm} we obtain the following result.

\begin{theorem}
Let 
\[
\xymatrix{0\ar[r]&\mathcal O_{B_1}\otimes{\mathbb
    K}\ar[r]&\A\ar[r]&\mathcal O_{B_3}\ar[r]&0}
\]
be a unital essential extension with $\mathcal O_{B_1}$ and $\mathcal O_{B_3}$ both simple
Cuntz-Krieger algebras. The following are equivalent
\begin{enumerate}[(i)]
\item $\A$ is a Cuntz-Krieger algebra 
\item $\A$ is a graph $C^*$-algebra
\item $\A$ has real rank zero
\end{enumerate}
\end{theorem}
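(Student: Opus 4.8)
The plan is to prove the cycle of implications (i)$\Rightarrow$(ii)$\Rightarrow$(iii)$\Rightarrow$(i), with the last being the substantial one and essentially a repetition of the argument for the preceding theorem, invoking Theorem~\ref{one-ideal-CK-range-thm} in place of Theorem~\ref{one-ideal-finite-vertices-range-thm}. The crucial preliminary observation is that a simple Cuntz-Krieger algebra is automatically purely infinite: it is the algebra of a finite graph with no sinks, which must therefore contain a cycle, so the dichotomy for simple graph $C^*$-algebras rules out the AF case. Consequently both $\mathcal O_{B_1}$ and $\mathcal O_{B_3}$ are unital Kirchberg algebras and we are throughout in the $\inin$ situation.

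The implication (i)$\Rightarrow$(ii) is immediate, since a Cuntz-Krieger algebra is by definition the $C^*$-algebra of a finite graph (with no sinks or sources). For (ii)$\Rightarrow$(iii) I would argue exactly as in the earlier permanence results: writing $\I=\mathcal O_{B_1}\otimes\K$ as an ideal of the graph $C^*$-algebra $\A$, the simplicity of $\I$ (equivalently, of the quotient) forces $\I$ to be gauge-invariant, so that the index map $\partial_0$ vanishes by Remark~\ref{gaugevanishes}. Since both $\I$ and $\A/\I$ are Kirchberg algebras and hence of real rank zero, the real rank zero permanence result \cite[Theorem~3.14 and Corollary~3.16]{lgbgkp:crrz} then yields that $\A$ has real rank zero.

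For the key implication (iii)$\Rightarrow$(i), I would first note that real rank zero gives $\partial_0=0$ via \cite{lgbgkp:crrz}, so $\Ksix(\A,\I)$ has the required form. Because $\mathcal O_{B_1}$ and $\mathcal O_{B_3}$ are purely infinite and simple, the groups $G_1=K_0(\mathcal O_{B_1})$ and $G_3=K_0(\mathcal O_{B_3})$ carry the trivial pre-order, so conditions (3) and (4) of Theorem~\ref{one-ideal-CK-range-thm} hold vacuously, condition (1) holds as Cuntz-Krieger algebras have finitely generated $K$-theory, and condition (2') holds by \eqref{rank-computation}, which gives $\rank G_i=\rank F_i$ for $i=1,3$. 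Since the order unit may be arbitrary in Theorem~\ref{one-ideal-CK-range-thm}, I can realize $\Ksix(\A,\I)$ together with $[1_\A]_0$ by a finite graph $E_2$ with no sinks or sources carrying a unique nontrivial ideal; in particular $C^*(E_2)$ is a Cuntz-Krieger algebra $\mathcal O_{B_2}$ with the same invariant as $\A$.

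It then remains to upgrade the agreement of invariants to an isomorphism $\A\cong C^*(E_2)$, and this classification step is the main obstacle. I would observe that $\I$ is a stable simple purely infinite $C^*$-algebra, so its corona $M(\I)/\I$ is simple and the given essential extension is automatically full; moreover $\A$ is separable, nuclear, and in the bootstrap class, being an extension of such algebras. The unital $\inin$-type classification of \cite{withgunnar}, which uses precisely $\Ksix$ together with the class of the unit as a complete invariant, then gives $\A\simeq C^*(E_2)$, whence $\A$ is a Cuntz-Krieger algebra and the cycle closes. The delicate point requiring care is that \cite{withgunnar} must be applied to $\A$ directly---a priori only a real-rank-zero unital full extension of simple Cuntz-Krieger algebras, not yet known to be a graph algebra---so one must verify that $\A$ lies in the class to which that classification applies before concluding the desired isomorphism.
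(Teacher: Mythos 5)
Your proposal is correct and takes essentially the same route as the paper: the paper obtains this theorem exactly by repeating the proof of the preceding permanence theorem with Theorem~\ref{one-ideal-CK-range-thm} in place of Theorem~\ref{one-ideal-finite-vertices-range-thm} --- that is, realizing $\Ksix(\A,\I)$ together with $[1_\A]_0$ by a finite graph with no sinks or sources and then invoking the classification of \cite{withgunnar} to conclude $\A\simeq C^*(E_2)$. The details you supply (pure infiniteness of simple Cuntz--Krieger algebras, verification of condition (2') via \eqref{rank-computation}, fullness of the extension from simplicity of the corona, and the applicability of \cite{withgunnar} to $\A$ before it is known to be a graph algebra) are precisely the points the paper leaves implicit.
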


We see no reason why this theorem should not hold when $\mathcal O_{B_1}$ and $\mathcal O_{B_3}$ are given of real rank zero and with an arbitrary ideal lattice, but at the moment proving this seems outside reach. Substantial progress has been reported in \cite{abk}.

\end{document}